\tikzstyle{doublearr}=[latex-latex,red, line width=0.5pt]
\tikzstyle{doublearr2}=[latex-latex,green!80!black, line width=0.5pt]
\newcommand{\qq}{\mathbf{q}}
\newcommand{\QQ}{\mathbf{Q}}
\newcommand{\bQ}{\bar{Q}}
\newcommand{\tQ}{\tilde{Q}}
\newcommand{\ZZ}{\mathbf{Z}}
\newcommand{\bZ}{\bar{\mathbbm{Z}}}	
\newcommand{\yy}{\mathbf{y}}			
\newcommand{\FF}{\mathbf{F}}
\newcommand{\xx}{\mathbf{x}}
\newcommand{\XX}{\mathbf{X}}
\newcommand{\YY}{\mathbf{Y}}
\def\sss{}
\newcommand{\Op}{\mathrm{O}_{\sss P}}
\newcommand{\op}{\mathrm{o}_{\sss P}}
\newcommand{\N}{\mathbbm{N}}
\newcommand{\R}{\mathbbm{R}}
\newcommand{\Z}{\mathbbm{Z}}
\newcommand\given[1][]{\:#1\middle|\:} 
\newcommand{\dif}{\ensuremath{\mbox{d}}}  
\newcommand{\e}{\mathrm{e}}
\newcommand{\pto}{\ensuremath{\xrightarrow{\mathbbm{P}}}}  
\newcommand{\dto}{\ensuremath{\xrightarrow{\mathcal{L}}}}  
\newcommand\norm[1]{\left\|#1\right\|_1}  
\newcommand\E[1]{\mathbbm{E}\left(#1\right)}  
\newcommand\ind[1]{\ensuremath{\mathbbm{1}_{\left[#1\right]}}} 
\newcommand\Pro[1]{\mathbbm{P}\left(#1\right)}  
\newtheorem{theorem}{Theorem}
\newtheorem{corollary}[theorem]{Corollary}
\newtheorem{lemma}[theorem]{Lemma}
\newtheorem{proposition}[theorem]{Proposition}
\newtheorem{remark}[theorem]{Remark}
\tikzstyle{mybox} = [draw=red, fill=yellow!20, thick, minimum height=.4cm,
\tikzstyle{fancytitle} =[fill=blue, text=white]
\numberwithin{equation}{section}
\numberwithin{theorem}{section}
\newcommand{\MJSQ}{ \mathrm{ \sss MJSQ}}
\newcommand{\CJSQ}{ \mathrm{\sss CJSQ}}
\newcommand{\JSQ}{ \mathrm{\sss JSQ}}
\newcommand{\jsq}{\textnormal{JSQ}}
\tikzstyle{mybox} = [draw=black, thick, minimum height=.6cm,
\tikzstyle{fancytitle} =[fill=blue, text=white]
\begin{document}

\title{Universality of Power-of-$d$ Load Balancing \\
in Many-Server Systems}

\author[1]{Debankur Mukherjee\footnote{\texttt{d.mukherjee@tue.nl}}}
\author[1,2]{Sem C.~Borst}
\author[1]{\\ Johan S.H.~van Leeuwaarden}
\author[3]{Philip A.~Whiting}
\affil[1]{
Eindhoven University of Technology, The Netherlands}
\affil[2]{Nokia Bell Labs, Murray Hill, NJ, USA}
\affil[3]{
Macquarie University, North Ryde, NSW, Australia}

\renewcommand\Authands{, }

\date{\today}

\maketitle 

\begin{abstract}
{\footnotesize
We consider a system of $N$~parallel single-server queues with unit exponential service rates and a single dispatcher where tasks arrive as a Poisson process of rate $\lambda(N)$. When a task arrives, the dispatcher assigns it to a server with the shortest queue among $d(N)$ randomly selected servers ($1 \leq d(N) \leq N$). This load balancing strategy is referred to as a JSQ($d(N)$) scheme, marking that it subsumes the celebrated Join-the-Shortest Queue (JSQ) policy as a crucial special case for $d(N) = N$.

We construct a stochastic coupling to bound the difference in the queue length processes between the JSQ policy and a JSQ($d(N)$) scheme with an arbitrary value of~$d(N)$. We use the coupling to derive the fluid limit in the regime where $\lambda(N) / N \to \lambda < 1$ as $N \to \infty$ with $d(N) \to\infty$, along with the associated fixed point. The fluid limit turns out not to depend on the exact growth rate of $d(N)$, and in particular coincides with that for the JSQ policy. We further leverage the coupling to establish that the diffusion limit in the critical regime where $(N - \lambda(N)) / \sqrt{N} \to \beta > 0$ as $N \to \infty$ with $d(N)/(\sqrt{N} \log (N))\to\infty$ corresponds to that for the JSQ policy. These results indicate that the optimality of the JSQ policy can be preserved at the fluid-level and diffusion-level while reducing the overhead by nearly a factor~O($N$) and O($\sqrt{N}/\log(N)$), respectively.
}
\end{abstract}

\section{Introduction}

In this paper we establish a universality property for a broad
class of randomized load balancing schemes in many-server systems.
While the specific features of load balancing policies may considerably
differ, the principal purpose is to distribute service requests or tasks
among servers or distributed resources in parallel-processing systems.
Well-designed load balancing schemes provide an effective mechanism
for improving relevant performance metrics experienced by users
while achieving high resource utilization levels.
The analysis and design of load balancing schemes has attracted
strong renewed interest in the last several years, mainly motivated
by significant challenges involved in assigning tasks
(e.g.~file transfers, compute jobs, database look-ups)
to servers in large-scale data centers.


In the present paper we focus on a basic scenario of sending tasks from a single dispatcher to $N$~parallel
queues with identical servers, exponentially distributed service
requirements, and a service discipline at each individual server that
is oblivious to the actual service requirements (e.g.~FCFS).
In this canonical case, the so-called Join-the-Shortest-Queue (JSQ)
policy has several strong optimality properties, and in particular
minimizes the overall mean delay among the class of non-anticipating
load balancing policies that do not have any advance knowledge of the
service requirements \cite{EVW80,W78,Winston77}.
(Relaxing any of the three above-mentioned assumptions tends to break
the optimality properties of the JSQ policy, and renders the
delay-minimizing policy quite complex or even counter-intuitive,
see for instance \cite{GHSW07,Jonckheere06,Whitt86}.)

In order to implement the JSQ policy, a dispatcher requires
instantaneous knowledge of the queue lengths at all the servers,
which may give rise to a substantial communication burden,
and not be scalable in scenarios with large numbers of servers.
The latter issue has motivated consideration of so-called JSQ($d$)
strategies, where the dispatcher assigns an incoming task to a server
with the shortest queue among $d$~servers selected uniformly at random.
Mean-field limit theorems in Mitzenmacher~\cite{Mitzenmacher01}
and Vvedenskaya {\em et al.}~\cite{VDK96} indicate that even a value as
small as $d = 2$ yields significant performance improvements
in a many-server regime with $N \to \infty$, in the sense that the tail
of the queue length distribution at each individual server falls off much
more rapidly compared to a strictly random assignment policy ($d = 1$).
This is commonly referred to as the ``power-of-two'' effect.
While these results were originally proved for exponential service
requirement distributions, they have been extended to general
service requirement distributions in Bramson {\em et al.}~\cite{BLP12}.
Analyses of several variants of this model can be found in~\cite{LM06, LN05, G05, BL12, EG16}

The diversity parameter~$d$ thus induces
a fundamental trade-off between the amount of communication overhead
and the performance in terms of queue lengths and delays.
Specifically, a strictly random assignment policy can be implemented
with zero overhead, but for any positive load per server,
the probability of non-zero wait and the mean waiting time do \emph{not}
fall to zero as $N \to \infty$.
In contrast, a nominal implementation of the JSQ policy (without
maintaining state information at the dispatcher) involves O($N$)
overhead per task, but it can be shown that the probability of non-zero
wait and the mean waiting time vanish as $N \to \infty$
for any fixed subcritical load per server.
Although JSQ($d$) strategies with a fixed parameter $d \geq 2$
yield significant performance improvements over purely random task
assignment while reducing the communication overhead by a factor O($N$)
compared to the JSQ policy, the probability of non-zero wait and mean waiting time do
\emph{not} vanish in the limit. 
In that sense a fixed value
of~$d$ is not sufficient to achieve asymptotically optimal performance.
This is also reflected by recent results of Gamarnik {\em et al.}~\cite{GTZ16} indicating that in the absence of any memory at the
dispatcher the communication overhead per task must grow with~$N$
in order to allow a zero mean waiting time in the limit.

In order to gain further insight in the trade-off between performance
and communication overhead as governed by the diversity parameter~$d$,
we also consider a regime where the number of servers~$N$ grows large,
but allow the value of~$d$ to depend on~$N$,
and write $d(N)$ to explicitly reflect that.
For convenience, we assume a Poisson arrival process of rate $\lambda(N)$
and unit-mean exponential service requirements.

We construct a stochastic coupling to bound the difference
in the queue length processes between the ordinary JSQ policy and a scheme
with an arbitrary value of $d(N)$.
We exploit the coupling to obtain the fluid limit 
in the subcritical regime where $\lambda(N) / N \to \lambda < 1$
as $N \to \infty$ with $d(N) \to\infty$, along with the associated fixed point.
As it turns out, the fluid limit does not depend on the exact growth rate
of $d(N)$, and in particular coincides with that for the JSQ policy. 
This implies that the overhead of the JSQ policy can be reduced by
`almost' a factor O($N$) while maintaining fluid-level optimality.
In case of batch arrivals fluid-level optimality can even be achieved with
$O(1)$ communication overhead per task.

We further consider the Halfin-Whitt heavy-traffic regime where
$(N - \lambda(N)) / \sqrt{N} \to \beta > 0$ as $N \to \infty$.
Recent work of Eschenfeldt \& Gamarnik~\cite{EG15} showed that the
diffusion-scaled system occupancy state for the ordinary JSQ policy
in this regime weakly converges to a two-dimensional reflected
Ornstein-Uhlenbeck process.
We leverage the above-mentioned coupling to prove that the diffusion limit
in case $d(N)/(\sqrt{N} \log(N))\to\infty$ as $N\to\infty$ corresponds to that for the JSQ policy.
This indicates that the overhead of the JSQ policy can
`almost' be reduced to O($\sqrt{N}\log N$) while retaining diffusion-level optimality. The above condition is in fact close to necessary, in the sense that
the diffusion-level behavior of the scheme is sub-optimal
if $d(N)/ (\sqrt{N} \log N)\to 0$ as $N\to\infty$.

The above results mirror the fluid-level and diffusion-level optimality properties reported in the companion paper~\cite{MBLW16-4}
for power-of-d($N$) strategies in a scenario with~$N$ server pools, where each server pool is a collection of servers, each working at unit rate.
The coupling developed in~\cite{MBLW16-4} has greater hold on the task completions, and provides absolute bounds on the difference of each component of the occupancy states.
More specifically, the task completions depend on the total number of active tasks in the entire system, whereas in the single-server scenario, it depends only on the number of non-idle servers.
As a result, obtaining a stochastic coupling bound in this paper becomes analytically more challenging, and in contrast with the infinite-server scenario, involves the cumulative loss terms and tail sums of the occupancy states of the ordinary JSQ policy.
This imposes the additional challenge of proving the $\ell_1$ convergence of the occupancy state process of the ordinary JSQ policy as will be described in greater detail later.
To the best of our knowledge, this is the first time the transient fluid limit of the ordinary JSQ policy is rigorously established.

The idea of using coupling to prove scaling limits of large-scale parallel-server systems was 
introduced by the authors in~\cite{MBLW15}.
The coupling method there was much weaker and was useful only for systems starting from specific initial occupancy states and for the particular scaling regime considered in that paper.
In contrast, in the current paper we need to develop a much stronger and wider coupling framework involving an intermediate class of schemes as described in Section~\ref{subsec:strategy} to establish the universality results.
In addition, we consider arbitrary starting states and different scaling regimes.
Remark~\ref{rem:jap-comp} further discusses the novelty and importance of the current stochastic comparison framework.

The remainder of the paper is organized as follows.
In Section~\ref{sec: model descr} we present a detailed model
description and state the main results, and in Section~\ref{sec:coupling} we construct a coupling and establish the stochastic ordering relations. Sections~\ref{sec:fluid} and~\ref{sec:diffusion} contain  the proofs of the fluid and diffusion limit results, respectively. 
Finally in Section~\ref{sec:conclusion} we
make some concluding remarks and briefly comment on future research directions.

\section{Main Results}\label{sec: model descr}

\subsection{Model description and notation}
Consider a system with $N$~parallel single-server queues with identical servers
and a single dispatcher.
Tasks with unit-mean exponential service requirements arrive at the
dispatcher as a Poisson process of rate $\lambda(N)$,
and are instantaneously forwarded to one of the servers.
Specifically, when a task arrives, the dispatcher assigns it
to a server with the shortest queue among $d(N)$ randomly selected
servers ($1 \leq d(N) \leq N$).
This load balancing strategy will
be referred to as the JSQ($d(N)$) scheme, marking that it subsumes
the ordinary JSQ policy as a crucial special case for $d(N) = N$.
The buffer capacity at each of the servers is~$b$ (possibly infinite),
and when a task is assigned to a server with $b$~pending tasks,
it is permanently discarded.

For any $d(N)$ ($1 \leq d(N) \leq N$), let 
$$\QQ^{\sss d(N)}(t): =
\left(Q_1^{\sss d(N)}(t), Q_2^{\sss d(N)}(t), \dots, Q_b^{\sss d(N)}(t)\right)$$ 
denote the
system occupancy state, where $Q_i^{\sss d(N)}(t)$ is the number of servers
under the JSQ($d(N)$) scheme with a queue length of~$i$ or larger,
at time~$t$, including the possible task in service, $i = 1, \dots, b$.
Figure~\ref{fig:1} provides a schematic diagram of the $Q_i$-values.
Throughout we assume that at each arrival epoch the servers are ordered
in  nondecreasing order of their queue lengths (ties can be broken arbitrarily), and whenever we refer
to some ordered server, it should be understood with respect to this prior ordering.

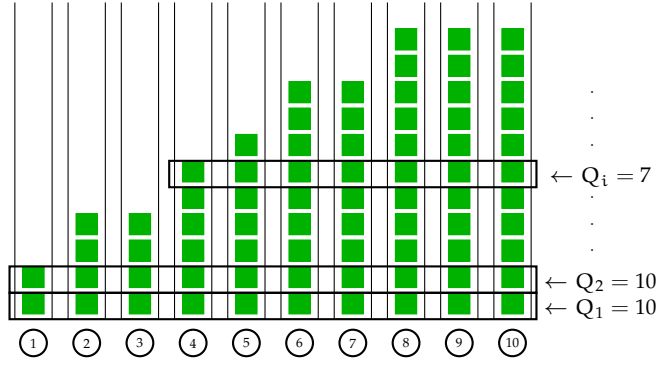
\begin{figure}
\begin{center}
\begin{tikzpicture}[scale=.70]
\foreach \x in {10, 9,...,1}
	\draw (\x,6)--(\x,0)--(\x+.7,0)--(\x+.7,6);
\foreach \x in {10, 9,...,1}
	\draw (\x+.35,-.45) node[circle,inner sep=0pt, minimum size=10pt,draw,thick] {{{\tiny $\mathsmaller{\x}$}}} ;
\foreach \y in {0, .5}
	\draw[fill=green!70!black,green!70!black] (1.15,.1+\y) rectangle (1.55,.5+\y);
\foreach \y in {0, .5, 1, 1.5}
	\draw[fill=green!70!black,green!70!black] (2.15,.1+\y) rectangle (2.55,.5+\y);
\foreach \y in {0, .5, 1, 1.5}
	\draw[fill=green!70!black,green!70!black] (3.15,.1+\y) rectangle (3.55,.5+\y);
\foreach \y in {0, .5, 1, 1.5, 2, 2.5}
	\draw[fill=green!70!black,green!70!black] (4.15,.1+\y) rectangle (4.55,.5+\y);
\foreach \y in {0, .5, 1, 1.5, 2, 2.5, 3}
	\draw[fill=green!70!black,green!70!black] (5.15,.1+\y) rectangle (5.55,.5+\y);
\foreach \y in {0, .5, 1, 1.5, 2, 2.5, 3, 3.5, 4}
	\draw[fill=green!70!black,green!70!black] (6.15,.1+\y) rectangle (6.55,.5+\y);
\foreach \y in {0, .5, 1, 1.5, 2, 2.5, 3, 3.5, 4}
	\draw[fill=green!70!black,green!70!black] (7.15,.1+\y) rectangle (7.55,.5+\y);
\foreach \y in {0, .5, 1, 1.5, 2, 2.5, 3, 3.5, 4, 4.5, 5}
	\draw[fill=green!70!black,green!70!black] (8.15,.1+\y) rectangle (8.55,.5+\y);
\foreach \y in {0, .5, 1, 1.5, 2, 2.5, 3, 3.5, 4, 4.5, 5}
	\draw[fill=green!70!black,green!70!black] (9.15,.1+\y) rectangle (9.55,.5+\y);
\foreach \y in {0, .5, 1, 1.5, 2, 2.5, 3, 3.5, 4, 4.5, 5}
	\draw[fill=green!70!black,green!70!black] (10.15,.1+\y) rectangle (10.55,.5+\y);

\draw[thick] (.9,0) rectangle (10.8,.5);
\draw[thick] (.9,.5) rectangle (10.8,1);
\draw[thick] (3.9,2.5) rectangle (10.8,3);

\draw  (12, .2) node {{\scriptsize $\leftarrow Q_1=10$}};
\draw  (12, .7) node {{\scriptsize $\leftarrow Q_2=10$}};

\draw  (11.85, 1.3) node {{\tiny $\cdot$}};
\draw  (11.85, 1.8) node {{\tiny $\cdot$}};
\draw  (11.85, 2.3) node {{\tiny $\cdot$}};
\draw  (12, 2.7) node {{\scriptsize $\leftarrow Q_i=7$}};
\draw  (11.85, 3.3) node {{\tiny $\cdot$}};
\draw  (11.85, 3.8) node {{\tiny $\cdot$}};
\draw  (11.85, 4.3) node {{\tiny $\cdot$}};

\end{tikzpicture}
\end{center}
\caption{The occupancy state of the system; When the servers are arranged in nondecreasing order of their queue lengths, $Q_i$ represents the width of the $i^\mathrm{\mathrm{th}}$ row.}
\label{fig:1}
\end{figure}

We occasionally omit the superscript $d(N)$, and replace it by~$N$, to refer
the $N^{\mathrm{th}}$ system, when the value of $d(N)$ is clear from the context.
When a task is discarded, in case of a finite buffer size,
we call it an \emph{overflow} event, and we denote by $L^{\sss d(N)}(t)$ the
total number of overflow events under the JSQ($d(N)$) scheme up to time~$t$.

A sequence of random variables $\big\{X_N\big\}_{N\geq 1}$, for some function $f:\R\to\R_+$, is said to be $\Op(f(N))$, if the sequence of scaled random variables $\big\{X_N/f(N)\big\}_{N\geq 1}$ is  tight, or said to be $\op(f(N))$, if $\big\{X_N/f(N)\big\}_{N\geq 1}$ converges to zero in probability.
Boldfaced letters are used to denote vectors.
We denote by $\ell_1$ the space of all summable sequences. 
For any set $K$, the closure is denoted by $\overline{K}$. 
We denote by $D_E[0,\infty)$ the set of all \emph{c\'adl\'ag} (right continuous left limit exists) functions from $[0,\infty)$ to a complete separable metric space $E$, and
by `$\dto$'  convergence in distribution for real-valued random variables and with respect to the Skorohod-$J_1$ topology for c\'adl\'ag processes.

\subsection{Fluid-limit results}\label{sec:fluidresult}

In the fluid-level analysis, we consider the subcritical regime
where $\lambda(N) / N \to \lambda < 1$ as $N \to \infty$.
In order to state the results, we first introduce some useful notation.
Denote the fluid-scaled system occupancy state by
$\qq^{\sss d(N)}(t) := \QQ^{\sss d(N)}(t) / N$, i.e., $q^{\sss d(N)}_i(t)=Q^{\sss d(N)}_i(t)/N$, and define 
$$\mathcal{S} =
\left\{\qq \in [0, 1]^b: q_i \leq q_{i-1} \mbox{ for all } i = 2, \dots, b, \mbox{ and } \sum_{i=1}^b q_i < \infty\right\}$$
as the set of all possible fluid-scaled occupancy states equipped with $\ell_1$ topology.
For any $\qq \in \mathcal{S}$, denote $m(\qq) = \min\{i: q_{i + 1} < 1\}$,
with the convention that
$q_{b+1} = 0$ if $b < \infty$.
Note that $m(\qq)<\infty$, since $\qq\in\ell_1$.
If $m(\qq)=0$, then define $p_{0}(\qq)=1$ and $p_i(\qq) = 0$ for all $i \geq 1$.
If $m(\qq)>0$, distinguish two cases, depending on whether the normalized arrival
rate $\lambda$ is larger than $1 - q_{m(\qq) + 1}$ or not.
If $\lambda < 1 - q_{m(\qq) + 1}$, then define $p_{m(\qq) - 1}(\qq) = 1$
and $p_i(\qq) = 0$ for all $i \neq m(\qq) - 1$.
On the other hand, if $\lambda > 1 - q_{m(\qq) + 1}$,
then $p_{m(\qq) - 1}(\qq) = (1 - q_{m(\qq) + 1}) / \lambda$,
$p_{m(\qq)}(\qq) = 1 - p_{m(\qq) - 1}(\qq)$,
and $p_i(\qq) = 0$ for all $i \neq m(\qq) - 1, m(\qq)$.
Note that the assumption $\lambda < 1$ ensures that the latter case
cannot occur when $m(\qq) = b<\infty$.

\begin{theorem}{\normalfont (Universality of fluid limit for JSQ($d(N)$) scheme)}
\label{fluidjsqd}
Assume $\qq^{\sss d(N)}(0) \to \qq^\infty$ in~$\mathcal{S}$ and $\lambda(N)/N\to\lambda<1$ as $N \to \infty$.
For the \jsq$(d(N))$ scheme with $d(N) \to\infty$, any subsequence of the sequence of processes
$\big\{\qq^{\sss d(N)}(t)\big\}_{t \geq 0}$ has a further subsequence that converges weakly with respect to the Skorohod $J_1$ topology, to the limit $\big\{\qq(t)\big\}_{t \geq 0}$ 
satisfying the following system of integral equations
\begin{equation}\label{eq:fluid}
 q_i(t) = q_i^\infty+ \lambda \int_0^t p_{i-1}(\qq(s))\dif s - \int_0^t(q_i(s) - q_{i+1}(s))\dif s,\quad i=1,\ldots,b,
\end{equation}
where the coefficients $p_i(\cdot)$ are as
defined earlier.
\end{theorem}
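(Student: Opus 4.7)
The plan is to split the argument into two stages. First, I would establish the theorem in the special case $d(N)=N$, namely the $\ell_1$ fluid limit for the ordinary JSQ policy. Second, I would invoke the stochastic coupling from Section~\ref{sec:coupling} to propagate this limit to every JSQ($d(N)$) scheme with $d(N)\to\infty$.

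For the first stage, I would write $Q_i^N(\cdot)/N$ via its semimartingale decomposition into the initial state plus compensated Poisson arrival- and departure-counting processes. A functional law of large numbers for the martingale parts reduces the analysis to identification of the drift. Under JSQ, each arrival is routed to a server with the currently shortest queue, and from this one can read off at a fluid-scaled state $\qq$ which level absorbs arriving mass, recovering the coefficients $p_i(\qq)$ in the statement; the departure drift $-(q_i-q_{i+1})$ simply reflects the instantaneous fraction of servers with exactly $i$ tasks, each served at unit rate. The concrete tasks are then (a) tightness of $\{\qq^N(\cdot)\}_{N\geq 1}$ in $D_{\mathcal{S}}[0,\infty)$ with $\mathcal{S}$ endowed with the $\ell_1$ topology, (b) identification of every subsequential limit as a solution of~\eqref{eq:fluid}, and (c) uniqueness of this solution, so that the full limit exists. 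Coordinate-wise tightness follows from Doob's inequality applied to the compensated Poisson martingales together with Aldous' criterion; identification uses the state-dependent drift above; and uniqueness can be handled via a Gr\"onwall-type argument after checking that the right-hand side of~\eqref{eq:fluid} is Lipschitz in $\qq$ on the regular strata of $\mathcal{S}$.

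Upgrading tightness from the product topology to $\ell_1$ requires the uniform tail estimate
\begin{equation*}
\lim_{K\to\infty}\limsup_{N\to\infty}\mathbb{P}\Big(\sup_{t\in[0,T]}\sum_{i\geq K}q_i^N(t)>\varepsilon\Big)=0.
\end{equation*}
Since $\lambda<1$, this bound can be obtained by a stochastic comparison between the total queue length under JSQ and that under purely random assignment, in which each individual server behaves as an independent M/M/1 queue of load~$\lambda$, combined with the assumed $\ell_1$ convergence $\qq^{\sss d(N)}(0)\to\qq^\infty$, which controls the initial tails uniformly in~$N$.

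For the second stage, the coupling of Section~\ref{sec:coupling} furnishes a pathwise bound on $\|\QQ^{\sss N,\jsq}(t)-\QQ^{\sss d(N)}(t)\|_1$ in terms of cumulative loss counts and tail sums of the JSQ occupancy process; under $d(N)\to\infty$ these auxiliary terms are $\op(N)$ on compact time intervals, so $\qq^{\sss d(N)}(\cdot)$ inherits the same $\ell_1$ limit as $\qq^{\sss N,\jsq}(\cdot)$ and therefore satisfies~\eqref{eq:fluid}. The main obstacle is step~(a) above: the $\ell_1$ tightness of the JSQ occupancy process is precisely the extra difficulty flagged in the introduction relative to the infinite-server analysis of~\cite{MBLW16-4}, because the coupling bound in the second stage features tail sums of $\QQ^{\sss N,\jsq}$, and anything weaker than $\ell_1$ control cannot be turned into the required $\op(N)$ bound.
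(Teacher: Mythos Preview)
Your two-stage architecture matches the paper's, but the drift identification in Stage~1 is where the real work lies and your sketch does not address it. Under JSQ the pre-limit assignment term is $\int_0^t\mathbbm{1}\{\ZZ^N(s)\in\mathcal{R}_i\}\,\dif s$ with $Z^N_j=N-Q^N_j$; this indicator does \emph{not} converge pointwise to $p_{i-1}(\qq(s))$. The formula $p_{m(\qq)-1}(\qq)=\min\{(1-q_{m(\qq)+1})/\lambda,1\}$ emerges from a genuine time-scale separation: at the critical level $m=m(\qq)$, the integer-valued process $Z^N_m$ behaves as an $O(1)$ birth--death chain evolving on a time scale $O(N)$ faster than $\qq^N$, and $p_{m-1}(\qq)$ is the stationary probability that this fast chain is positive. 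The paper makes this rigorous via the stochastic-averaging framework of Hunt--Kurtz~\cite{HK94}: encode $\ZZ^N$ into an occupation measure $\alpha^N$ on $[0,\infty)\times G$, establish joint relative compactness of $(\qq^N,\alpha^N)$, and then show any limiting $\alpha$ is concentrated on the (unique) stationary law $\pi_{\qq(s)}$ of the fast chain, so that $\alpha([0,t]\times\mathcal{R}_i)=\int_0^t p_{i-1}(\qq(s))\,\dif s$. ``Reading off the drift from the fluid state'' does not produce $p_{m-1}$ without this averaging step; it is the central technical ingredient and you have skipped it. Your Gr\"onwall route to uniqueness is likewise incomplete: $\qq\mapsto p_i(\qq)$ is discontinuous across the strata $\{m(\qq)=m\}$, so Lipschitz-on-strata alone does not close the argument without additional structure (e.g.\ monotonicity of $t\mapsto m(\qq(t))$).

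On Stage~2, Section~\ref{sec:coupling} does \emph{not} deliver a direct pathwise bound on $\|\QQ^{\JSQ}-\QQ^{\sss d(N)}\|_1$. It supplies two distinct tools: a tail-sum sandwich placing any CJSQ$(n)$ scheme between JSQ and MJSQ$(n)$ (Corollary~\ref{cor:bound}), and an $\ell_1$ bound between two S-coupled schemes via the cumulative number of times they differ in decision (Proposition~\ref{prop:stoch-ord2}). The paper bridges JSQ and JSQ$(d(N))$ through the auxiliary scheme JSQ$(n(N),d(N))\in\mathrm{CJSQ}(n(N))$: the sandwich applies after first showing MJSQ$(n(N))$ has the JSQ fluid limit (re-parametrise $\bar N=N-n(N)$ and note $\bar\lambda(\bar N)/\bar N\to\lambda$), and Proposition~\ref{prop:differ} shows JSQ$(n(N),d(N))$ rarely differs from JSQ$(d(N))$. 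Taking $n(N)=N/\sqrt{d(N)}$ makes both discrepancies $o_P(N)$. Your one-line summary is pointing at the right section but understates the construction needed.
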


The above theorem shows that the fluid-level dynamics do not depend
on the specific growth rate of $d(N)$ as long as $d(N) \to \infty$
as $N \to \infty$.
In particular, the JSQ($d(N)$) scheme with $d(N)\to\infty$ exhibits the
same behavior as the ordinary JSQ policy  in the limit, and thus achieves fluid-level
optimality.

The coefficient $p_i(\qq)$ represents the instantaneous fraction
of incoming tasks assigned to servers with a queue length of exactly~$i$
in the fluid-level state $\qq \in \mathcal{S}$.
Assuming $m(\qq) < b$, a strictly positive fraction $1 - q_{m(\qq) + 1}$
of the servers have a queue length of exactly $m(\qq)$.
Since $d(N) \to \infty$, the fraction of incoming tasks that get assigned
to servers with a queue length of $m(\qq) + 1$ or larger is zero:
$p_i(\qq) = 0$ for all $i = m(\qq) + 1, \dots, b - 1$.
Also, tasks at servers with a queue length of exactly~$i$ are completed
at (normalized) rate $q_i - q_{i + 1}$, which is zero for all
$i = 0, \dots, m(\qq) - 1$, and hence the fraction of incoming tasks
that get assigned to servers with a queue length of $m(\qq) - 2$ or less
is zero as well: $p_i(\qq) = 0$ for all $i = 0, \dots, m(\qq) - 2$.
This only leaves the fractions $p_{m(\qq) - 1}(\qq)$
and $p_{m(\qq)}(\qq)$ to be determined.
Now observe that the fraction of servers with a queue length of exactly
$m(\qq) - 1$ is zero.
If $m(\qq)=0$, then clearly the incoming tasks will join the empty queue, 
and thus, $p_{m(\qq)}=1$, and $p_i(\qq) = 0$ for all $i \neq m(\qq)$.
Furthermore, if $m(\qq)\geq 1$, since tasks at servers with a queue length of exactly $m(\qq)$
are completed at (normalized) rate $1 - q_{m(\qq) + 1} > 0$,
incoming tasks can be assigned to servers with a queue length of exactly
$m(\qq) - 1$ at that rate.
We thus need to distinguish between two cases, depending on whether the
normalized arrival rate $\lambda$ is larger than $1 - q_{m(\qq) + 1}$ or not.
If $\lambda < 1 - q_{m(\qq) + 1}$, then all the incoming tasks can be
assigned to a server with a queue length of exactly $m(\qq) - 1$,
so that $p_{m(\qq) - 1}(\qq) = 1$ and $p_{m(\qq)}(\qq) = 0$.
On the other hand, if $\lambda > 1 - q_{m(\qq) + 1}$, then not all
incoming tasks can be assigned to servers with a queue length of
exactly $m(\qq) - 1$ active tasks, and a positive fraction will be
assigned to servers with a queue length of exactly $m(\qq)$:
$p_{m(\qq) - 1}(\qq) = (1 - q_{m(\qq) + 1}) / \lambda$
and $p_{m(\qq)}(\qq) = 1 - p_{m(\qq) - 1}(\qq)$.

It is easily verified that the unique fixed point $\qq^\star = (q_1^\star,q_2^\star,\ldots, q_b^\star)$ of the system of differential
equations in~\eqref{eq:fluid} is given by
\begin{equation}
\label{eq:fixed point}
q_i^* = \left\{\begin{array}{ll} \lambda, & i = 1, \\
0, & i =  2, \dots, b. \end{array} \right.
\end{equation}
Note that the fixed point in~\eqref{eq:fixed point} is consistent with the results in \cite{Mitzenmacher01,VDK96,YSK15}
for fixed~$d$, where taking $d \to \infty$ yields the same fixed point.
However, the results in \cite{Mitzenmacher01,VDK96,YSK15} for fixed~$d$
cannot be directly used to handle joint scalings, and do not yield the
universality of the entire fluid-scaled sample path
for arbitrary initial states as established in Theorem~\ref{fluidjsqd}.

The fixed point in~\eqref{eq:fixed point} in conjunction with the interchange of limits result in Proposition~\ref{th:interchange} below indicates that in stationarity the fraction of servers with a queue
length of two or larger is negligible.
Let 
$$\pi^{\sss d(N)}(\cdot)=\lim_{t\to\infty}\Pro{\qq^{\sss d(N)}(t)=\cdot}$$ 
be the stationary measure of the occupancy states of the $N^{\mathrm{th}}$ system.  
\begin{proposition}{{\normalfont (Interchange of limits)}}
\label{th:interchange}
For the JSQ$(d(N))$ scheme 
let $\pi^{\sss d(N)}$ be the stationary measure of the occupancy states of the $N^{\mathrm{th}}$ system. 
Then $\pi^{\sss d(N)}\dto\pi^\star$ as $N\to\infty$ with $d(N)\to\infty$, where $\pi^\star=\delta_{\qq^\star}$ with $\delta_x$ being the Dirac measure concentrated upon $x$, and $\qq^\star$ as in~\eqref{eq:fixed point}.
\end{proposition}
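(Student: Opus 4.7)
The plan is to follow the standard interchange-of-limits paradigm for Markov processes in three steps: (i) establish tightness of $\{\pi^{\sss d(N)}\}_{N\geq 1}$ with respect to the $\ell_1$ topology on $\mathcal{S}$; (ii) identify any weak subsequential limit as an invariant measure for the fluid semigroup associated with~\eqref{eq:fluid}; and (iii) show that $\delta_{\qq^\star}$ is the unique such invariant measure, so every subsequential limit must coincide with it.

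For step (i), a Foster--Lyapunov argument with $V(\QQ)=\sum_{i=1}^b Q_i$ ensures positive recurrence of each prelimit chain: once all $N$ servers are busy, the drift of $V$ equals $\lambda(N)-N$, which is negative and of order $N$ under the subcritical scaling, giving existence of $\pi^{\sss d(N)}$. Marginal tightness of each coordinate $q_i^{\sss d(N)}\in[0,1]$ is automatic, so the real task is the uniform tail bound
\[
\lim_{K\to\infty}\sup_{N}\,\E{\sum_{i>K} q_i^{\sss d(N)}(\infty)}=0.
\]
I would derive this from a uniform geometric estimate on the total occupancy $S^{\sss d(N)}:=\sum_i Q_i^{\sss d(N)}$. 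Whenever $S^{\sss d(N)}\geq N$ one has $Q_1^{\sss d(N)}=N$, so the restriction of $S^{\sss d(N)}$ to $\{S\geq N\}$ is dominated by a birth--death chain with upward rate $\lambda(N)$ and downward rate $N$. This yields $\sup_N \Pro{S^{\sss d(N)}(\infty)\geq N+j}\leq C\rho^j$ for some fixed $\rho\in(\lambda,1)$, and summing the excess populations gives the desired $\ell_1$ tail control.

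Step (ii) is then routine: along any subsequence with $\pi^{\sss d(N)}\to\pi^\infty$, initialize the $N^\mathrm{th}$ process in stationarity so that $\qq^{\sss d(N)}(0)\sim\pi^{\sss d(N)}$; by Skorokhod embedding we may assume $\qq^{\sss d(N)}(0)\to\qq^\infty\sim\pi^\infty$ almost surely in $\ell_1$, and Theorem~\ref{fluidjsqd} delivers a fluid trajectory $\qq(\cdot)$ satisfying~\eqref{eq:fluid} with $\qq(0)\sim\pi^\infty$. Stationarity at the prelimit level forces $\qq(t)\sim\pi^\infty$ for every $t\geq 0$, so $\pi^\infty$ is invariant under the fluid semigroup. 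For step (iii), the key structural observation is that the coefficients $p_i(\qq)$ are nonzero only for $i\in\{m(\qq)-1,m(\qq)\}$, so for any index $i$ strictly exceeding $m(\qq(t))$ the fluid ODE reduces to $\dot q_i(t)=-(q_i(t)-q_{i+1}(t))\leq 0$. A downward induction on $i$ (with the $\ell_1$ tail bound from step (i) replacing the topmost coordinate when $b=\infty$) shows $q_i(t)\to 0$ for every $i\geq 2$; the equation for $q_1$ then asymptotically becomes $\dot q_1(t)=\lambda-q_1(t)$, pinning down $q_1(t)\to\lambda$. Pointwise convergence upgrades to $\ell_1$ convergence via the uniform tail control, so $\pi^\infty=\delta_{\qq^\star}$.

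I expect the uniform $\ell_1$ tail bound in step (i) to be the main obstacle, and it is precisely the additional difficulty highlighted in the introduction relative to the infinite-server companion paper~\cite{MBLW16-4}: because the total departure rate equals the number of non-idle servers $Q_1^{\sss d(N)}$ rather than the total population, the total-occupancy process is not directly majorized by a single birth--death chain, and a careful coupling relating the tail sums $\sum_{i>K}Q_i^{\sss d(N)}$ to excursions of $S^{\sss d(N)}$ above the relevant threshold---chained with the stochastic coupling constructed in Section~\ref{sec:coupling}---appears to be the natural route.
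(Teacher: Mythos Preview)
Your three-step outline---tightness, subsequential limits are invariant for the fluid flow, unique fixed point---is exactly the paper's strategy, and your steps (ii)--(iii) are essentially what the paper does (indeed your global-attractivity argument in (iii) is more carefully spelled out than the paper's one-line appeal to uniqueness of the fixed point).

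Where you diverge is step~(i), and the gap you flag is genuine: a bound on the total population $S^{\sss d(N)}=\sum_i Q_i^{\sss d(N)}$ does \emph{not} by itself control the tail sums $\sum_{i>K} q_i^{\sss d(N)}$. For instance, a configuration with a single queue of length~$N$ and $N-1$ idle servers has $S=N$ but $\sum_{i\geq 2}Q_i=N-1$; ruling out such configurations in stationarity requires exactly the kind of occupancy information you are trying to establish. The excursion/coupling route you sketch might eventually work, but it is substantially harder than necessary.

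The paper sidesteps this entirely with a one-line trick: instead of comparing $\jsq(d(N))$ upward to the total population, compare it \emph{downward} to $\jsq(1)$. The S-coupling of Section~\ref{sec:coupling} (Proposition~\ref{prop:det ord}) gives, for any $d_1\leq d_2$, the stochastic ordering $\sum_{i\geq m}Q_i^{\sss d_2}\leq_{\rm st}\sum_{i\geq m}Q_i^{\sss d_1}$ for every $m\geq 1$, simply because sampling more servers never increases the ordered position of the chosen server. Taking $d_1=1$ and $d_2=d(N)$ reduces the tail bound to that of $\jsq(1)$, which is a product of $N$ independent M/M/$1$ queues with load $\lambda(N)/N\to\lambda<1$; there the stationary tail $\sum_{i\geq k}q_i$ is geometric in~$k$, uniformly in~$N$, and Lemma~\ref{lem:tightcond} delivers $\ell_1$-tightness immediately. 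This replaces your proposed Foster--Lyapunov plus excursion analysis by a single stochastic comparison to an explicitly solvable system.
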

The above proposition relies on tightness of $\big\{\pi^{d(N)}\big\}_{N\geq 1}$ and the global stability of the fixed point, and is proved in Subsection~\ref{ssec:globstab}.
\\

We now consider an extension of the model in which tasks arrive in batches. We assume that the batches arrive as a Poisson process with rate  $\lambda(N)/\ell(N)$, and have fixed size $\ell(N)>0$, so that the effective total task arrival rate remains $\lambda(N)$. 
We will show that even for arbitrarily slowly growing batch size,  fluid-level optimality can be achieved with $O(1)$ communication overhead per task. 
For that, we define the JSQ($d(N)$) scheme adapted for batch arrivals. When a batch of size $\ell(N)$ arrives, the dispatcher samples $d(N)\geq \ell(N)$ servers without replacement, and assigns the $\ell(N)$ tasks to the $\ell(N)$ servers with the smallest queue length among the sampled servers.

\begin{theorem}{\normalfont (Batch arrivals)}
\label{th:batch}
Consider the batch arrival scenario with growing batch size $\ell(N)\to\infty$ and $\lambda(N)/N\to\lambda<1$ as $N\to\infty$. For the \jsq$(d(N))$ scheme with $d(N)\geq \ell(N)/(1-\lambda-\varepsilon)$ for any fixed $\varepsilon>0$, if $q^{\sss d(N)}_1(0)\pto q_1^\infty\leq \lambda$, and $q_i^{\sss d(N)}(0)\pto 0$ for all $i\geq 2$, then the sequence of processes
$\big\{\qq^{\sss d(N)}(t)\big\}_{t\geq 0}$ converges weakly to the limit $\big\{\qq(t)\big\}_{t\geq 0}$, described as follows: 
\begin{equation}\label{eq:batch}
q_1(t) = \lambda + (q_1^\infty-\lambda)\e^{-t},\quad
q_i(t)\equiv 0\quad \mathrm{for\ all}\quad i= 2,\ldots,b.
\end{equation}
\end{theorem}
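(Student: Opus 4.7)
The key observation is that when the fraction of busy servers is at most $\lambda + \eta$ for small $\eta>0$, the expected fraction of idle servers in a random sample of $d(N)$ is at least $1-\lambda-\eta$, which (for $\eta<\varepsilon$) strictly exceeds $\ell(N)/d(N)\leq 1-\lambda-\varepsilon$. Since $d(N)\geq \ell(N)\to\infty$, the hypergeometric law of large numbers ensures that with probability $1-o(1)$ each batch finds at least $\ell(N)$ idle servers among its $d(N)$ samples, so every task in the batch joins an empty queue. In that regime the $\jsq(d(N))$ batch dynamics collapse to those of a birth--death process counting busy servers with birth rate $\lambda(N)$ and per-server death rate $1$, whose fluid limit is precisely the ODE $\dot q_1 = \lambda - q_1$ in~\eqref{eq:batch}.

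The plan is to formalize this via a stopping-time argument. Fix $T>0$, pick $\eta\in(0,\varepsilon)$, and set $\tau_N := \inf\{t\geq 0 : q_1^{\sss d(N)}(t) > \lambda + \eta\}$. Let $B_N(t)$ denote the total number of tasks dispatched to non-empty servers on $[0,t]$, and let $\alpha_N(s)$ be the conditional probability, given the state at time $s$, that a batch arriving at $s$ finds fewer than $\ell(N)$ idle servers in its $d(N)$ samples. Compensating the Poisson batch-arrival process yields
\begin{equation*}
 \E{B_N(T\wedge\tau_N)} \;\leq\; \ell(N)\cdot \E{\int_0^{T\wedge\tau_N} \alpha_N(s)\,\frac{\lambda(N)}{\ell(N)}\,\dif s} \;\leq\; \lambda(N)\,T\cdot\sup_{s\leq T\wedge\tau_N}\alpha_N(s),
\end{equation*}
and the hypergeometric LLN, applied at each state with $q_1\leq\lambda+\eta$, forces the supremum to be $o(1)$. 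Hence $B_N(T\wedge\tau_N)=\op(N)$.

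Consequently, on $[0,T\wedge\tau_N]$ the processes $Q_i^{\sss d(N)}$ for $i\geq 2$ can grow only through bad batches (total contribution $\leq B_N$) and otherwise only decrease through completions; combined with $Q_i^{\sss d(N)}(0)=\op(N)$, this yields $\sup_{t\leq T\wedge\tau_N} q_i^{\sss d(N)}(t)\pto 0$ for each $i\geq 2$. For $q_1^{\sss d(N)}$, arrivals to empty servers total $\lambda N t + \op(N)$ uniformly on $[0,T\wedge\tau_N]$, while completions from queues of length exactly one equal $\int_0^t (Q_1^{\sss d(N)}(s)-Q_2^{\sss d(N)}(s))\,\dif s = \int_0^t Q_1^{\sss d(N)}(s)\,\dif s + \op(N)$. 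A standard Kurtz-type martingale / functional law of large numbers argument then delivers $\sup_{t\leq T\wedge\tau_N}|q_1^{\sss d(N)}(t) - q_1(t)|\pto 0$ with $q_1(t) = \lambda + (q_1^\infty-\lambda)\e^{-t}$.

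Since $q_1(t)\leq\lambda<\lambda+\eta$ for all $t\geq 0$, the preceding convergence is incompatible with $\tau_N\leq T$, so $\Pro{\tau_N\leq T}\to 0$, the localization can be removed, and the claimed weak limit on $[0,T]$ follows. The main delicacy is the cumulative bound on ``misrouted'' tasks: the Poisson-compensator estimate above reduces it to the single-batch probability $\alpha_N$ being $o(1)$, which holds whenever $d(N)\to\infty$. This is precisely what lets the theorem allow $\ell(N)$ (and $d(N)$) to grow arbitrarily slowly --- a naive union bound over the $\Theta(N/\ell(N))$ batches in $[0,T]$ would instead require $\ell(N)/\log N\to\infty$.
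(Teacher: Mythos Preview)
Your proposal is correct and follows essentially the same route as the paper: define the stopping time $\tau_N$ (the paper's $T^N_{\varepsilon'}$) at which $q_1^{d(N)}$ first exceeds $\lambda+\eta$, bound the cumulative number of tasks not assigned to idle servers on $[0,T\wedge\tau_N]$ via a hypergeometric concentration/expectation estimate (the paper's $W^N(t)$ is your $B_N(t)$), reduce to the birth--death fluid limit $\dot q_1=\lambda-q_1$ (the paper isolates this as a separate Theorem~\ref{th:batchjsq}), and then close the loop by arguing $\Pro{\tau_N\leq T}\to 0$. The only cosmetic differences are that the paper invokes an explicit Hoeffding-type exponential bound for the hypergeometric tail where you appeal qualitatively to the LLN, and that the paper's closing step routes through a comparison with ordinary JSQ whereas your direct ``$q_1(t)\leq\lambda$ contradicts $\tau_N\leq T$'' argument is slightly cleaner.
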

The fluid limit in~\eqref{eq:batch} agrees with the fluid limit of the JSQ$(d(N))$ scheme if the initial state is taken as in Theorem~\ref{th:batch}. 
Further observe that the fixed point also coincides with that of the JSQ policy, as given by \eqref{eq:fixed point}.
Also, for a fixed $\varepsilon>0$, the communication overhead per task is on average given by $(1-\lambda-\varepsilon)^{-1}$ which is $O(1)$.
Thus Theorem~\ref{th:batch} ensures that in case of batch arrivals with growing batch size, fluid-level optimality can be achieved with $O(1)$ communication overhead per task.
The result for the fluid-level optimality in stationarity can also be obtained indirectly by exploiting the fluid-limit result in~\cite{YSK15}.
Specifically, it can be deduced from the result in~\cite{YSK15} that for batch arrivals with growing batch size, the JSQ$(d(N))$ scheme with suitably growing $d(N)$ yields the same fixed point of the fluid limit as described in~\eqref{eq:fixed point}.

\subsection{Diffusion-limit results}

In the diffusion-limit analysis, we consider the Halfin-Whitt
regime where 
$$\frac{N - \lambda(N)}{ \sqrt{N}} \to \beta\quad\text{as}\quad N \to \infty$$
for some positive coefficient $\beta > 0$.
In order to state the results, we first introduce some useful notation.
Let $\bar{\QQ}^{\sss d(N)}(t) =
\big(\bar{Q}_1^{\sss d(N)}(t), \bar{Q}_2^{\sss d(N)}(t), \dots, \bar{Q}_b^{\sss d(N)}(t)\big)$
be a properly centered and scaled version of the system occupancy state
$\QQ^{\sss d(N)}(t)$, with 
$$\bar{Q}_1^{\sss d(N)}(t) = - \frac{N-Q_1^{\sss d(N)}(t)}{ \sqrt{N}},\qquad \bar{Q}_i^{\sss d(N)}(t) =\frac{ Q_i^{\sss d(N)}(t)}{\sqrt{N}},\quad i = 2, \dots, b.$$
The reason why $Q_1^{\sss d(N)}(t)$ is centered around~$N$ while $Q_i^{\sss d(N)}(t)$,
$i = 2, \dots, b$, are not, is because the fraction of servers with a queue
length of exactly one tends to one, whereas the fraction of servers with a queue length of two or more tends to zero as $N\to\infty$.

\begin{theorem}{\normalfont (Universality of diffusion limit for JSQ($d(N)$) scheme)}
\label{diffusionjsqd}
Assume $\bQ_i^{\sss d(N)}(0) \dto \bQ_i(0)$ in $\mathbbm{R}$ as $N \to \infty$,
buffer capacity $b\geq 2$ (possibly infinite),
and there exists some $k\geq 2$ such that $\bQ_{k+1}^N(0)=0$ for all sufficiently large $N$.
For $d(N) /( \sqrt{N} \log N)\to\infty$,
the sequence of processes $\big\{\bar{\QQ}^{\sss d(N)}(t)\big\}_{t \geq 0}$ 
converges weakly to the limit
 $\big\{\bar{\QQ}(t)\big\}_{t \geq 0}$ in~$D_{\mathcal{S}}[0,\infty)$,
where $\bar{Q}_i(t) \equiv 0$ for $i \geq k+1$
and $(\bar{Q}_1(t), \bar{Q}_2(t),\ldots,\bQ_k(t))$
are the unique solutions in $D_{\R^k}[0,\infty)$ of the stochastic integral equations
\begin{equation}\label{eq:diffusionjsqd}
\begin{split}
\bar{Q}_1(t) &= \bar{Q}_1(0) + \sqrt{2} W(t) - \beta t +
\int_0^t (- \bar{Q}_1(s) + \bar{Q}_2(s)) \dif s - U_1(t), \\
\bar{Q}_2(t) &= \bar{Q}_2(0) + U_1(t) - \int_0^t (\bar{Q}_2(s)-\bar{Q}_3(s) )\dif s, \\
\bQ_i(t)&=\bQ_i(0)-\int_0^t (\bQ_i(s) - \bQ_{i+1}(s))\dif s,\quad i= 3,\ldots,k
\end{split}
\end{equation}
for $t \geq 0$, where $W$ is the standard Brownian motion and $U_1$ is
the unique nondecreasing nonnegative process in~$D_\R[0,\infty)$ satisfying
$\int_0^\infty \mathbbm{1}_{[\bar{Q}_1(t) < 0]} \dif U_1(t) = 0$.

\end{theorem}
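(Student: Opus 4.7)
The plan is to transfer the diffusion limit for the ordinary JSQ policy, established by Eschenfeldt and Gamarnik~\cite{EG15}, to the JSQ($d(N)$) scheme via the coupling constructed in Section~\ref{sec:coupling}. Under that coupling, $\QQ^{\sss N}(\cdot)$ and $\QQ^{\sss d(N)}(\cdot)$ are realized on a common probability space, and their componentwise discrepancy is majorized by a functional of (i) the number of \emph{sampling failures}---arrival epochs at which the $d(N)$-subset chosen by JSQ($d(N)$) contains no server attaining the current global minimum queue length---together with (ii) cumulative-loss and tail-sum corrections reflecting the non-exact character of the coupling. The goal is to show that these contributions are asymptotically negligible on the diffusion scale when $d(N)/(\sqrt{N}\log N)\to\infty$, so that $\bar{\QQ}^{\sss d(N)}$ inherits the limit of $\bar{\QQ}^{\sss N}$.

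\textbf{Sampling-failure estimate.} The central quantitative step controls the per-arrival failure probability. Conditional on the current state, let $M$ denote the number of servers tied at the global minimum queue length; then the probability that none of these $M$ servers is among the $d(N)$ sampled (without replacement) equals
\[
\frac{\binom{N-M}{d(N)}}{\binom{N}{d(N)}} \;\leq\; \left(1-\frac{M}{N}\right)^{d(N)} \;\leq\; \exp\!\left(-\frac{M\,d(N)}{N}\right).
\]
On the a priori event $\{\bar{Q}_1^{\sss N}(t)\geq -A \text{ for all } t\in[0,T]\}$---which, by the Eschenfeldt-Gamarnik limit, holds with probability tending to one as $A\uparrow\infty$---we have $M=N-Q_1\geq c\sqrt{N}$ when idle servers are present, and $M=N-Q_2$ is $\Theta(N)$ otherwise. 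A union bound over the $O(NT)$ arrivals on $[0,T]$ therefore bounds the expected number of failures by $O\bigl(N\exp(-c\,d(N)/\sqrt{N})\bigr)$, which vanishes precisely under $d(N)/(\sqrt{N}\log N)\to\infty$ (and, as the paper later shows, this threshold is essentially sharp). Since each failure contributes $O(1/\sqrt{N})$ to the diffusion-scaled discrepancy, the aggregate discrepancy remains $o(1)$ even after accounting for the $\sqrt{N}$ normalization.

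\textbf{Main obstacle: localization.} The circularity issue is that the a priori bound on $\bar{Q}_1^{\sss N}$ used above is only available after $\bar{\QQ}^{\sss d(N)}$ has been coupled to $\bar{\QQ}^{\sss N}$, which in turn requires $M$ to be large. The standard remedy is a stopping-time localization: introduce $\tau_N^A := \inf\{t\geq 0 : \bar{Q}_1^{\sss d(N)}(t) < -A\}$, restrict attention to $[0,T\wedge\tau_N^A]$, and observe that on this truncated interval the sampling-failure bound forces $\bar{\QQ}^{\sss d(N)}\approx \bar{\QQ}^{\sss N}$ in the $\ell_1$ norm, from which $\bar{Q}_1^{\sss d(N)}$ inherits the tightness of its JSQ counterpart. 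A standard tightness/bootstrap argument together with $\mathbbm{P}(\tau_N^A\leq T)\to 0$ (first $N\to\infty$, then $A\to\infty$) removes the localization. The cumulative-loss and tail-sum corrections in the coupling bound are handled using the $\ell_1$-convergence of the JSQ occupancy process developed for Theorem~\ref{fluidjsqd}, which prevents mass from leaking into queue lengths above $k$ and makes the finite-buffer loss terms asymptotically negligible.

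\textbf{Identification of the limit.} Once the coupling is effective on $[0,T\wedge\tau_N^A]$, it remains to identify the limit of $\bar{\QQ}^{\sss N}$. Eschenfeldt and Gamarnik~\cite{EG15} supply the weak convergence of $(\bar{Q}_1^{\sss N},\bar{Q}_2^{\sss N})$ to the two-dimensional reflected Ornstein-Uhlenbeck process described by the first two lines of~\eqref{eq:diffusionjsqd}. For $i\geq 3$, $Q_i^{\sss N}$ can only increment when $Q_{i-1}^{\sss N}=N$, which by the same $\ell_1$-estimates is asymptotically impossible; hence $\bar{Q}_i^{\sss N}$ experiences only departures in the limit and satisfies the deterministic evolution $\dot{\bar{Q}}_i=-(\bar{Q}_i-\bar{Q}_{i+1})$, with $\bar{Q}_i\equiv 0$ for $i\geq k+1$ propagating from the hypothesized initial condition. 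Assembling these pieces delivers weak convergence of $\bar{\QQ}^{\sss d(N)}(\cdot)$ in $D_{\mathcal{S}}[0,\infty)$ equipped with the $\ell_1$ topology to the process specified in~\eqref{eq:diffusionjsqd}.
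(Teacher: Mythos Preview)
Your central sampling-failure estimate contains a sign error that undermines the whole argument. You localize on $\{\bar Q_1^{\sss N}(t)\geq -A\}$ and then assert $M=N-Q_1\geq c\sqrt{N}$ when idle servers are present. But $\bar Q_1^{\sss N}\geq -A$ means $N-Q_1^{\sss N}\leq A\sqrt{N}$, i.e.\ an \emph{upper} bound on the number of idle servers, not a lower bound. Nothing in the Eschenfeldt--Gamarnik picture prevents the process from spending a nontrivial amount of time with just one or two idle servers; at such instants the per-arrival failure probability $\exp(-M\,d(N)/N)$ is essentially $\exp(-d(N)/N)\approx 1$, and your union bound over $O(N)$ arrivals blows up rather than vanishes. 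The stopping-time bootstrap you propose does not rescue this, because the difficulty is not that $\bar Q_1^{\sss d(N)}$ might be very negative---it is that $\bar Q_1$ can be very close to zero, which is exactly where the diffusion lives.

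The paper's route is designed precisely to avoid this state dependence. Rather than compare JSQ$(d(N))$ directly with JSQ, it passes through the class CJSQ$(n(N))$ of schemes that always assign to one of the $n(N)+1$ lowest-ordered servers. First, CJSQ$(n(N))$ is sandwiched (via the S-coupling and the stack ordering of Proposition~\ref{prop:det ord}/Corollary~\ref{cor:bound}) between JSQ and MJSQ$(n(N))$, and MJSQ$(n(N))$ is literally JSQ on $N-n(N)$ servers, so it inherits the same diffusion limit whenever $n(N)/\sqrt{N}\to 0$. Second, the auxiliary scheme JSQ$(n(N),d(N))\in\text{CJSQ}(n(N))$ differs from JSQ$(d(N))$ only when none of the $n(N)$ lowest-ordered servers is sampled, an event of \emph{state-independent} probability $(1-n(N)/N)^{d(N)}$. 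Choosing $n(N)=N\log N/d(N)$ makes this probability $o(1/\sqrt N)$ as soon as $d(N)/(\sqrt N\log N)\to\infty$, so by Proposition~\ref{prop:stoch-ord2} the $\ell_1$ discrepancy is $o_P(\sqrt N)$---no localization, no bootstrap, no control of $M$ required. Your identification of the limit (the Eschenfeldt--Gamarnik result plus the deterministic decay for $i\geq 3$) and your remark that the loss terms vanish are in line with the paper; it is the direct-comparison step that fails and should be replaced by the CJSQ sandwich.
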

Although~\eqref{eq:diffusionjsqd} differs from the diffusion limit obtained for the fully pooled M/M/N model in the Halfin-Whitt regime~\cite{HW81, LK11, LK12}, it shares similar favorable properties.
Observe that $-\bQ_1^{\sss d(N)}$ is the scaled number of vacant servers. 
Thus, Theorem~\ref{diffusionjsqd} shows that over any finite time horizon,
there will be $O_P(\sqrt{N})$ servers with queue length zero
and $O_P(\sqrt{N})$ servers with a queue length larger than two,
and hence all but $O_P(\sqrt{N})$ servers have a queue length of exactly one.
This diffusion limit is proved in~\cite{EG15} for the ordinary JSQ policy, and its steady-state properties are 
studied in~\cite{Braverman18, BM18, BM18b}.
Our contribution is to construct a stochastic coupling
and establish that, somewhat remarkably, the diffusion limit is the
same for any JSQ($d(N)$) scheme, as long as $d(N) /( \sqrt{N} \log(N))\to\infty$.
In particular, the JSQ($d(N)$) scheme with $d(N) /( \sqrt{N} \log(N))\to\infty$ exhibits the
same behavior as the ordinary JSQ policy in the limit, and thus achieves
diffusion-level optimality.
This growth condition for $d(N)$ is not only sufficient, but also nearly necessary,
as indicated by the next theorem.

\begin{theorem}{\normalfont (Almost necessary condition)}
\label{th:diff necessary}
Assume $\bQ_i^{\sss d(N)}(0) \dto \bQ_i(0)$ in $\mathbbm{R}$ as $N \to \infty$. If $d(N)/(\sqrt{N}\log N)\to 0$, then the diffusion limit of the \jsq$(d(N))$ scheme differs from that of the JSQ policy.
\end{theorem}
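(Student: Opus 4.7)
I would prove this by contradiction: assume $\bar{\QQ}^{\sss d(N)}$ converges weakly to the JSQ diffusion limit $\bar{\QQ}$ of Theorem~\ref{diffusionjsqd}, and exhibit a discrepancy. The heuristic is transparent. When there are $I = -\bar{Q}_1\sqrt{N}$ idle servers, a single JSQ$(d(N))$ arrival samples no idle server with probability $\binom{N-I}{d(N)}/\binom{N}{d(N)}\approx \exp(-d(N) I/N)$. For $I=\Theta(\sqrt{N})$ and $d(N)=o(\sqrt{N}\log N)$ this is at least $N^{-\varepsilon}$ for every $\varepsilon>0$ eventually, so over any positive time interval at least $\lambda(N)\cdot N^{-\varepsilon}\gg \sqrt{N}$ arrivals are ``misdirected'' onto non-idle servers, forcing $\bar{Q}_2^{\sss d(N)}$ to blow up at diffusion scale.

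The first step is to show that in the JSQ limit, $\bar{Q}_1$ spends positive time in a bounded negative window while $\bar{Q}_2$ stays bounded. Since on $\{\bar{Q}_1<0\}$ the reflection $U_1$ is inactive, there $\bar{Q}_1$ obeys $d\bar{Q}_1=(-\bar{Q}_1+\bar{Q}_2-\beta)\,dt+\sqrt{2}\,dW$, and standard arguments for nondegenerate diffusions give constants $M>\delta>0$ and $K,T,\eta,\alpha>0$ with
\begin{equation*}
\Pro{\mathrm{Leb}\bigl(\{t\in[0,T]:\bar{Q}_1(t)\in[-M,-\delta],\ \bar{Q}_2(t)\leq K\}\bigr)\geq 2\eta}\geq 2\alpha.
\end{equation*}
By the assumed weak convergence and a Portmanteau bound applied to a slight enlargement, the analogous pre-limit event with random set $T^{\sss d(N)}:=\{t\in[0,T]:\bar{Q}_1^{\sss d(N)}(t)\in[-2M,-\delta/2],\ \bar{Q}_2^{\sss d(N)}(t)\leq 2K\}$ of measure at least $\eta$ has probability at least $\alpha$ for all large $N$. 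On $T^{\sss d(N)}$ the idle count lies in $[\delta\sqrt{N}/2,\,2M\sqrt{N}]$, whence
\begin{equation*}
\binom{Q_1^{\sss d(N)}(t)}{d(N)}\Big/\binom{N}{d(N)}\ \geq\ \left(1-\frac{4M}{\sqrt{N}}\right)^{d(N)}\ \geq\ \exp\!\left(-\frac{8M\,d(N)}{\sqrt{N}}\right)\ \geq\ N^{-1/4}
\end{equation*}
for all large $N$, using $d(N)=o(\sqrt{N}\log N)$. A Lenglart/Doob inequality applied to the compensated counting process of misdirected arrivals then yields, with probability at least $\alpha/2$, at least $cN^{3/4}$ arrivals during $T^{\sss d(N)}$ whose $d(N)$ sampled servers are all non-idle.

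To finish, since $\bar{Q}_2^{\sss d(N)}\leq 2K$ on $T^{\sss d(N)}$, the fraction of queue-$\geq 2$ servers is $O(1/\sqrt{N})$, so the expected number of such servers among the $d(N)$ samples is $o(\log N)$; consequently a $1-o(1)$ fraction of the misdirected arrivals fall on queue-length-one servers and each increments $Q_2$ by one. Departures from level~$2$ run at rate at most $2K\sqrt{N}$ on $T^{\sss d(N)}$, removing only $O(\sqrt{N})$ units from $Q_2$ over $[0,T]$. Thus $Q_2^{\sss d(N)}(T)-Q_2^{\sss d(N)}(0)\gtrsim N^{3/4}$ with positive probability bounded away from $0$, i.e.\ $\bar{Q}_2^{\sss d(N)}(T)\gtrsim N^{1/4}\to\infty$ in probability, contradicting convergence to the finite limit $\bar{Q}_2(T)$. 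The principal obstacle is the interdependence of the random time set $T^{\sss d(N)}$ with the underlying Poisson arrival and service processes; I plan to handle it via a compensator argument for the counting process of misdirected arrivals combined with a Lenglart inequality, which decouples the concentration estimate from the precise queue-length trajectory.
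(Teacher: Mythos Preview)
Your contradiction strategy is correct and shares the paper's starting hypothesis, but the execution is genuinely different. The paper does not work with the JSQ$(d(N))$ process directly; instead it introduces an auxiliary scheme $\Pi(c(N))$ in which every server is kept busy (via dummy arrivals) and each external arrival is routed to a queue-length-one server with fixed probability $(1-c(N)/N)^{d(N)}$, else discarded. An S-coupling argument (Lemma~\ref{lem:upper}) shows that, under the hypothesis that $\bar Q_1^{d(N)}$ and $\bar Q_2^{d(N)}$ are stochastically bounded, $Q_2^{\Pi(c(N))}$ is an asymptotic lower bound for $Q_2^{d(N)}$; then a one-line martingale decomposition of $\bar Q_2^{\Pi(c(N))}$ shows its drift term $\lambda(N)t(1-c(N)/N)^{d(N)}/\sqrt N$ diverges once one picks $c(N)=g(N)\sqrt N$ with $g(N)\to\infty$ and $g(N)=o(\omega(N))$, where $\omega(N)=\sqrt N\log N/d(N)$. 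Your route bypasses the auxiliary scheme and coupling entirely, replacing them by a Portmanteau transfer of a sojourn-time bound from the limiting diffusion to the prelimit, followed by a compensator/Lenglart count of misdirected arrivals on the resulting random time set $T^{d(N)}$. This is more self-contained but carries extra overhead: the Portmanteau step needs lower semicontinuity of $\omega\mapsto\mathrm{Leb}\{t:\omega(t)\in G\}$ for open $G$ together with continuity of the limit paths, and the concentration step must cope with $T^{d(N)}$ not being a stopping interval. One small fix: your departure bound ``rate at most $2K\sqrt N$ on $T^{d(N)}$'' ignores level-$2$ departures on $[0,T]\setminus T^{d(N)}$; use instead that the hypothesis makes $\sup_{[0,T]}\bar Q_2^{d(N)}$ tight, so the total number of level-$2$ departures over $[0,T]$ is $\Op(\sqrt N)$ regardless. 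The paper's approach buys a cleaner endgame (a deterministic drift computation, no random time set) at the price of inventing and coupling to $\Pi(c(N))$; yours is direct but requires sharper control of the prelimit trajectory.
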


Theorem~\ref{th:diff necessary}, in conjunction with Theorem~\ref{diffusionjsqd}, shows that $\sqrt{N}\log N$ is the minimal order of $d(N)$ for the JSQ$(d(N))$ scheme to achieve diffusion-level optimality.

\subsection{Proof strategy}\label{subsec:strategy}
The idea behind the proofs of the asymptotic results for the JSQ$(d(N))$ scheme in Theorems~\ref{fluidjsqd} and~\ref{diffusionjsqd} is to 
(i)~prove the fluid limit and exploit the existing diffusion limit result for the ordinary JSQ policy, and then (ii)~prove a universality result by establishing that the ordinary JSQ policy and the JSQ$(d(N))$ scheme coincide under some suitable conditions on $d(N)$. 
For the ordinary JSQ policy the fluid limit in the subcritical regime is established in~Subsection~\ref{ssec:fluidjsq}, and the diffusion limit in the Halfin-Whitt heavy-traffic regime in~\cite[Theorem 2]{EG15}.
A direct comparison between the JSQ$(d(N))$ scheme and the ordinary JSQ policy is not straightforward, which is why we introduce the CJSQ$(n(N))$ class of schemes as an intermediate scenario to establish the universality result.

Just like the JSQ$(d(N))$ scheme, the schemes in the class CJSQ$(n(N))$ may be thought of as ``sloppy'' versions of the JSQ policy, in the sense that tasks are not necessarily assigned to a server with the shortest queue length but to one of the $n(N)+1$
lowest ordered servers, as graphically illustrated in Figure~\ref{fig:sfig1}.
In particular, for $n(N)=0$, the class only includes the ordinary JSQ policy. 
Note that the JSQ$(d(N))$ scheme is guaranteed to identify the lowest ordered server, but only among a randomly sampled subset of $d(N)$ servers.
In contrast, a scheme in the CJSQ$(n(N))$ class only guarantees that
one of the $n(N)+1$ lowest ordered servers is selected, but 
across the entire pool of $N$ servers. 
We will show that for sufficiently small $n(N)$, any scheme from the class CJSQ$(n(N))$ is still `close' to the ordinary JSQ policy. 
We will further prove that for sufficiently large $d(N)$ relative to $n(N)$ we can construct a scheme
called JSQ$(n(N),d(N))$, belonging to the CJSQ$(n(N))$ class, which differs `negligibly' from the JSQ$(d(N))$ scheme. 
Therefore,  for a `suitable' choice of $d(N)$ the idea is to produce a `suitable' $n(N)$.
This proof strategy is schematically represented in Figure~\ref{fig:sfig3}.
\begin{figure}
\begin{center}
\begin{subfigure}{.5\textwidth}
  \centering
  \begin{tikzpicture}[scale=.6]
\draw (1,0)--(1,2)--(3,2)--(3,2.5)--(5,2.5)--(5,3)--(7,3)--(7,3.5)--(8,3.5)--(8,4)--(10,4)--(10,4.5)--(11,4.5)--(11,0)--(1,0);

\draw  (1,.5)--(11,.5);
\draw  (1,1)--(11,1);
\draw  (1,1.5)--(11,1.5);
\draw  (1,2)--(11,2);
\draw  (3,2.5)--(11,2.5);
\draw  (5,3)--(11,3);
\draw  (7,3.5)--(11,3.5);
\draw  (10,4)--(11,4);

\draw[very thick] (1,6)--(1,0)--(11,0)--(11,6);

\draw[fill=green!70!black,opacity=0.3] (1.05,0) rectangle (2,6);
\draw[dashed] (1.5,5.5) -- (1.5, 2.25);
\draw[dashed, decoration={markings,mark=at position 1 with
    {\arrow[scale=1.2,>=stealth]{>}}},postaction={decorate}] (1.5, 2.25) -- (2.6, 2.25);

\draw [decorate,decoration={brace,amplitude=3pt},xshift=0pt,yshift=2pt]
(1,6.25) -- (2,6.25) node [black,midway,yshift=0.3cm] { $\scriptscriptstyle n(N)+1$};

\end{tikzpicture}
  \caption{CJSQ$(n(N))$ scheme\vspace{16pt}}
  \label{fig:sfig1}
\end{subfigure}%
\begin{subfigure}{.5\textwidth}
  \centering
  \begin{tikzpicture}[scale=.6]
  
  \draw(14,1) node[mybox, text width = 2.5cm,text centered]  {%
   {\scriptsize JSQ$(n(N),d(N))$}};
   
  \draw(22,1) node[mybox, text width = 2.5cm,text centered]  {%
   {\scriptsize  CJSQ$(n(N))$}};
   
   \draw(14,6) node[mybox, text width = 2.5cm,text centered]  {%
   {\scriptsize  JSQ$(d(N))$}};
   
   \draw(22,6) node[mybox, text width = 2.5cm,text centered]  {%
   {\scriptsize  JSQ}};

\draw[doublearr] (16.5,6) to (19.5,6);
\draw[doublearr2] (14,2) to (14,5);
\draw[doublearr2] (16.5,1) to (19.5,1);
\draw[doublearr2] (22,2) to (22,5);

\node [rotate=270] at (22.5,3.5) {\tiny Suitable $n(N)$};
\node [rotate=90] at (13.5,3.5) {\tiny Suitable $d(N)$};
\draw[text width=1.1cm, align=center] (18,1.75) node {\tiny Belongs to};
\draw[text width=1.1cm, align=center] (18,1.35) node {\tiny the class};
\end{tikzpicture}
  \caption{Asymptotic equivalence relations}
  \label{fig:sfig3}
\end{subfigure}
\caption{(a) High-level view of the CJSQ$(n(N))$ class of schemes, where as in Figure~\ref{fig:1}, the servers are arranged in the nondecreasing order of their queue lengths, and the arrival must be assigned through the left tunnel. (b) The equivalence structure is depicted for various intermediate load balancing schemes to facilitate the comparison between the JSQ$(d(N))$ scheme and the ordinary JSQ policy.
}
\label{fig:strategy}
\end{center}
\end{figure}
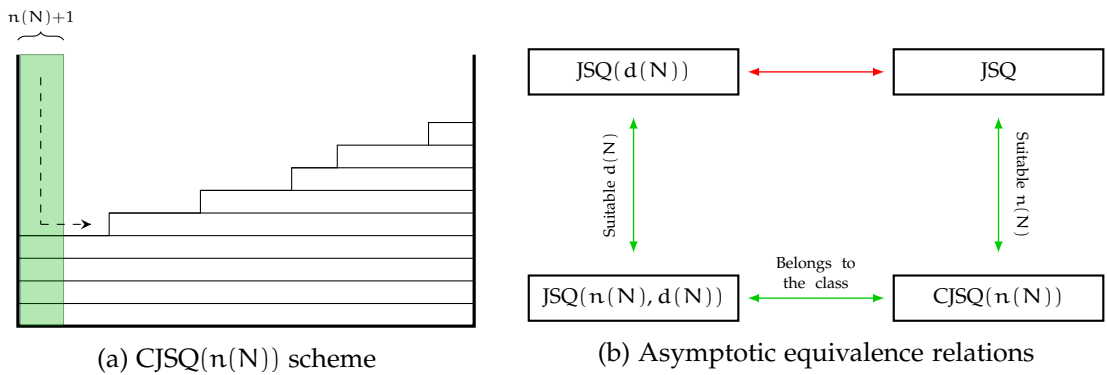

\begin{remark}\label{rem:jap-comp}\normalfont
As mentioned in the introduction, a coupling method was used in~\cite{MBLW15} to establish the diffusion limit of the Join-the-Idle Queue (JIQ) policy starting from specific initial occupancy states.
Comparing the JIQ and JSQ policies in that scaling regime was much easier when viewed as follows:
(i) If there is an idle server in the system, both JIQ and JSQ perform similarly,
(ii)~Also, when there is no idle server and only $O(\sqrt{N})$ servers with queue length two, JSQ assigns the arriving task to a server with queue length one. 
In that case, since JIQ assigns at random, the probability that the task will land on a server with queue length two and thus acts differently than JSQ is $O(1/\sqrt{N})$.
Since on any finite time interval the number of times an arrival finds all servers busy is at most $O(\sqrt{N})$, all the arrivals except an $O(1)$ of them are assigned in exactly the same manner in both JIQ and JSQ, which then leads to the same scaling limit for both policies.
Note that in the computation of the expected number of events when JIQ and JSQ performs differently, both the specific initial state condition and the scaling regime were crucial.
In the current paper the stochastic comparison framework is inherently different.
Here the idea pivots on two key observations: 
(i)~For any scheme, if each arrival is assigned to \emph{approximately} the shortest queue, then the scheme can still retain its optimality on various scales, and
(ii)~For any two schemes, if on any finite time interval not \emph{too many} arrivals are assigned to different ordered servers, then they can have the same scaling limits. 
Combination of the above two ideas provides a much wider coupling framework involving an intermediate class of schemes that enables us to consider arbitrary starting states and different scaling regimes.
In addition, the consideration of the arbitrary starting state will turn out to be crucial in order to extend the fluid-scale universality result to the steady state.
\end{remark}

In the next section we construct a stochastic coupling called S-coupling, which will be the key vehicle in establishing the universality result mentioned above.
\begin{remark} \normalfont
Observe that, sampling \emph{without} replacement polls more servers than \emph{with} replacement, and hence the minimum number of active tasks among the selected servers is stochastically smaller in the case without replacement.
As a result, for sufficient conditions as in Theorems~\ref{fluidjsqd} and~\ref{th:diff necessary}, it is enough to consider sampling with replacement.
Also, for notational convenience, in the proof of the almost necessary condition stated in Theorem~\ref{th:diff necessary} we will assume sampling with replacement, although the proof technique and the result is valid if the servers are chosen without replacement.
\end{remark} 

\section{Coupling and Stochastic Ordering}\label{sec:coupling}

In this section, we construct a coupling between any scheme from the class CJSQ($n(N)$) and the ordinary JSQ policy, which ensures that for sufficiently small $n(N)$, on any finite time interval, the two schemes differ negligibly. This plays an instrumental role in establishing  the universality results in Theorems~\ref{fluidjsqd} and~\ref{diffusionjsqd}.
All the statements in this section should be understood to apply to the $N^{\mathrm{th}}$ system with $N$ servers.

\subsection{Stack formation and deterministic ordering}
In order to prove the stochastic comparisons among the various schemes, as in~\cite{MBLW15}, we describe the many-server system as an ensemble of stacks, in a way that two different ensembles can be ordered.
In this formulation, at each step, items are added or removed according to some rule. From a high level,  we then show that if two systems follow some specific rules, then at any step, the two ensembles maintain some kind of deterministic ordering. 
This deterministic ordering turns into an almost sure ordering in the next subsection, when we construct the S-coupling.

Each server along with its queue is thought of as a stack of items, and we always consider the stacks to be arranged in nondecreasing order of their heights. 
The ensemble of stacks then represents the empirical CDF of the queue length distribution, and the $i^{\mathrm{th}}$ horizontal bar corresponds to $Q_i^{\Pi}$ (for some task assignment scheme $\Pi$), as depicted in Figure~\ref{fig:1}.
 If an arriving item happens to land on a stack which already contains $b$ items, then the item is discarded, and is added to a special stack $L^\Pi$ of discarded items, where it stays forever.

Any two ensembles $A$ and $B$, each having $N$ stacks and a maximum height $b$ per stack, are said to follow Rule($n_A,n_B,k$) at some step, 
if either an item is removed from the $k^{\mathrm{th}}$ stack in both ensembles (if nonempty), 
or an item is added to the $n_A^{\mathrm{th}}$ stack in ensemble $A$ and to the $n_B^{\mathrm{th}}$ stack in ensemble $B$. 

\begin{proposition}\label{prop:det ord}
For any two ensembles of stacks $A$ and $B$, as described above, if at any step \emph{Rule}$(n_A,n_B,k)$ is followed for some value of $n_A$, $n_B$, and $k$, with $n_A\leq n_B$, then the following ordering is always preserved: for all $m\leq b$,
\begin{equation}\label{eq:det ord}
\sum_{i=m}^b Q_i^A +L^A\leq \sum_{i=m}^b Q_i^B +L^B.
\end{equation}
\end{proposition}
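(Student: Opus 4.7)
My plan is to induct on the number of steps and show that a single application of Rule$(n_A, n_B, k)$ with $n_A \leq n_B$ preserves the displayed inequality. To make the induction close cleanly I would actually prove the slightly strengthened invariant $S_m^A := \sum_{i=m}^b Q_i^A + L^A \leq \sum_{i=m}^b Q_i^B + L^B =: S_m^B$ for every $m \in \{1, 2, \ldots, b+1\}$, where the $m = b+1$ case (empty sum) reduces to $L^A \leq L^B$. This extra index is needed because, as will be seen, the argument at one value of $m$ invariably invokes the invariant at an adjacent index, and the removal rule at $m = b$ demands control at $m = b+1$.

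The first ingredient is an explicit computation of how $S_m$ changes under each admissible move. If $h_1 \leq \cdots \leq h_N$ denote the sorted stack heights, then $S_m = \sum_{j=1}^N (h_j - m + 1)^+ + L$, from which one reads off that adding an item to the $n$-th stack changes $S_m$ by exactly $\mathbbm{1}[h_n \geq m - 1]$ (the non-overflow and overflow cases $h_n = b$ both contribute the same delta, via $L$ in the latter), while removing an item from the $k$-th stack changes $S_m$ by $-\mathbbm{1}[h_k \geq m]$, with no change when the stack is empty. Given these deltas, preservation of the invariant under Rule$(n_A, n_B, k)$ reduces to excluding a single kind of bad configuration in each of the two rule variants.

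For the removal rule, a failure at index $m$ would require $h_k^A < m \leq h_k^B$ together with $S_m^A = S_m^B$. The first condition forces $Q_m^A \leq N - k < Q_m^B$, whereas the saturated equality combined with the invariant at index $m + 1$ gives $Q_m^A = S_m^A - S_{m+1}^A \geq S_m^B - S_{m+1}^B = Q_m^B$, a contradiction. Symmetrically, the addition rule could fail at index $m$ only if $h_{n_A}^A \geq m - 1$, $h_{n_B}^B < m - 1$, and $S_m^A = S_m^B$; the hypothesis $n_A \leq n_B$ then yields $Q_{m-1}^A \geq N - n_A + 1 > N - n_B \geq Q_{m-1}^B$, whereas the invariant at $m - 1$ together with $S_m^A = S_m^B$ forces the reverse $Q_{m-1}^A \leq Q_{m-1}^B$. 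The extreme indices ($m = 1$ for addition and $m = b + 1$ for removal) are trivial because the relevant indicators agree on the two sides.

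The main obstacle I anticipate is the boundary index $m = b + 1$ under the addition rule, which is precisely why the invariant must include $L^A \leq L^B$ from the outset: a violation there would require $h_{n_A}^A = b$, $h_{n_B}^B < b$, and $L^A = L^B$, and the invariant at $m = b$ then forces $N - n_A + 1 \leq Q_b^A \leq Q_b^B \leq N - n_B$, contradicting $n_A \leq n_B$. Once this strengthened invariant is established inductively, the statement for $m \leq b$ is immediate. Beyond this, the remaining work is routine bookkeeping around when an overflow is triggered and which of the indicators flip, rather than a new conceptual idea.
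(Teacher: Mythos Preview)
Your proof is correct and follows essentially the same inductive strategy as the paper: reduce to the saturated case $S_m^A = S_m^B$ and invoke the invariant at a neighbouring index to force a contradiction on the relevant $Q$-component. The one structural difference is in the addition case: you appeal to the invariant at index $m-1$ (yielding $Q_{m-1}^A \leq Q_{m-1}^B$ directly), whereas the paper combines equality at $m$ with the global inequality at index $1$ to bound $\sum_{i=1}^{m-1} Q_i^A \leq \sum_{i=1}^{m-1} Q_i^B$; your choice is symmetric with the removal case and, together with your explicit inclusion of the $m=b+1$ index, handles the overflow bookkeeping more transparently than the paper does.
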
 

This proposition says that, while adding the items to the ordered stacks, if we ensure that in ensemble $A$ the item is always placed to the left of that in ensemble $B$, and if the items are removed from the same ordered stack in both ensembles, then the 
aggregate size of the $b-m+1$ highest horizontal bars as depicted in Figure~\ref{fig:1} plus the cumulative number of discarded items is no larger in $A$ than in $B$ throughout.

\begin{proof}[Proof of Proposition~\ref{prop:det ord}]
We prove the ordering by forward induction on the time-steps, i.e., we assume that at some step the ordering holds, and show that in the next step it will be preserved. 
In ensemble $\Pi$, where $\Pi=A$, $B$, after applying Rule($n_A,n_B,k$), the updated lengths of the horizontal bars are denoted by $\tQ^\Pi_i$, $i\geq 1$. Also, define $I_\Pi(c):=\max\big\{i\geq 0:Q_i^\Pi\geq N-c+1\big\}$, $c=1,\ldots,N$, with the convention that $Q_0^{\Pi}\equiv N$.

Now if the rule prescribes removal of an item from the $k^{\rm th}$ stack,  then the updated ensemble will have the values
\begin{equation}\label{eq:removal}
\tilde{Q}^{\Pi}_i=
\begin{cases}
Q^{\Pi}_i-1, &\mbox{ for }i=I_{\Pi}(k),\\
Q^{\Pi}_j,&\mbox{ otherwise, }
\end{cases}
\end{equation}
if $I_{\Pi}(k)\geq 1$; otherwise all the $Q^{\Pi}_i$-values remain unchanged. On the other hand, if the rule produces the addition of an item to stack $n_{\sss\Pi}$, then the values will be updated as
\begin{equation}\label{eq:addition}
\tilde{Q}^{\Pi}_i=
\begin{cases}
Q^{\Pi}_i+1, &\mbox{ for }i=I_{\Pi}(n_{\sss\Pi})+1,\\
Q^{\Pi}_j,&\mbox{ otherwise, }
\end{cases}
\end{equation}
if $I_{\Pi}(n_\Pi)<b_{\Pi}$, otherwise all values remain unchanged.

Fix any $m\leq b$. Observe that in any event the $Q_i$-values change by at most one at any step, and hence it suffices to prove the preservation of the ordering in the case when \eqref{eq:det ord} holds with equality:
\begin{equation}\label{eq:equality}
\sum_{i=m}^b Q_i^A +L^A= \sum_{i=m}^b Q_i^B +L^B.
\end{equation}

We distinguish between two cases depending on whether an item is removed or added.
First suppose that the rule prescribes removal of an item from the $k-$th stack from both ensembles. Observe from \eqref{eq:removal} that the value of $\sum_{i=m}^b Q_i^\Pi +L^\Pi$ changes if and only if $I_\Pi(k)\geq m$. Also, since removal of an item can only decrease the sum, without loss of generality we may assume that $I_B(k)\geq m$, otherwise the right side of \eqref{eq:equality} remains unchanged, and the ordering is trivially preserved. From our initial hypothesis,
\begin{equation}
\sum_{i=m+1}^b Q_i^{A}+L^{A}\leq
\sum_{i=m+1}^b Q_i^{B}+L^{B}.
\end{equation}
This implies
\begin{equation}
\begin{split}
Q_{m}^{A}&=\sum_{i=m}^b Q_i^{A}-\sum_{i=m+1}^b Q_i^{A}\geq \sum_{i=m}^b Q_i^{B}-\sum_{i=m+1}^b Q_i^{B}= Q_m^{B}.
\end{split}
\end{equation}
Also, 
\begin{equation}
\begin{split}
I_B(k)\geq m &\iff Q_m^B\geq N-k+1 \\
&\implies Q_m^A\geq N-k+1 \iff I_A(k)\geq m.
\end{split}
\end{equation}
Therefore the sum $\sum_{i=m}^b Q_i^A +L^A$ also decreases, and the ordering is preserved.

Now suppose that the rule prescribes addition of an item to the respective stacks in both ensembles.
From \eqref{eq:addition} we get that after adding an item, the value of $\sum_{i=m}^b Q_i^\Pi +L^\Pi$ increases only if $I_\Pi(n_{\sss\Pi})\geq~m-1$. As in the previous case, we assume \eqref{eq:equality}, and since adding an item can only increase the concerned sums, we assume that $I_A(n_{A})\geq~m-1$, because otherwise the left side of~\eqref{eq:equality} remains unchanged, and the ordering is trivially preserved. Now from our initial hypothesis we have
\begin{equation}\label{eq:initial}
\sum_{i=m-1}^b Q_i^A +L^A\leq \sum_{i=m-1}^b Q_i^B +L^B.
\end{equation}
Combining \eqref{eq:equality} with \eqref{eq:initial} gives
\begin{equation}\label{eq:prop-stoch-assump3}
\begin{split}
Q_{m-1}^{A}&=\left(\sum_{i=m-1}^b Q_i^{A}+L^A\right)-\left(\sum_{i=m}^b Q_i^{A}+L^A\right)\\
&\leq \left(\sum_{i=m-1}^b Q_i^{B}+L^B\right)-\left(\sum_{i=m}^b Q_i^{B}+L^B\right)= Q_{m-1}^{B}.
\end{split}
\end{equation}
Observe that
\begin{equation}
\begin{split}
I_A(n_{A})\geq m-1 &\iff Q_{m-1}^A\geq N-n_A+1\implies Q_{m-1}^A\geq N-n_B+1\\
&\implies Q_{m-1}^B\geq N-n_B+1\iff I_B(n_{B})\geq m-1.
\end{split}
\end{equation}
Hence, the value of $\sum_{i=m}^b Q_i^B +L^B$ also increases, and the ordering is preserved. 
\end{proof}

\subsection{Stochastic ordering}\label{ssec:stoch-ord}
We now use the deterministic ordering established in Proposition~\ref{prop:det ord} in conjunction with the S-coupling construction to prove a stochastic comparison between the JSQ$(d(N))$ scheme, a specific scheme from the class CJSQ$(n(N))$ and the ordinary JSQ policy.
As described earlier, the class CJSQ$(n(N))$ contains all schemes that assign incoming tasks by some rule to any of the $n(N)+1$ lowest ordered servers. 
Observe that when $n(N)=0$, the class contains only the ordinary  JSQ policy. Also, if $n^{(1)}(N)< n^{(2)}(N)$, then CJSQ$(n^{(1)}(N))\subset$ CJSQ$(n^{(2)}(N)).$ 
Let MJSQ$(n(N))$ be a particular scheme that always assigns incoming tasks to precisely the $(n(N)+1)^{\mathrm{th}}$ ordered server. 
Notice that this scheme is effectively the JSQ policy when the system always maintains $n(N)$ idle servers, or equivalently, uses only $N-n(N)$ servers, and MJSQ($n(N)$) $\in$ CJSQ($n(N)$). 
For brevity, we suppress $n(N)$ in the notation for the remainder of this subsection.

We call any two systems \emph{S-coupled}, if they have synchronized arrival clocks and departure clocks of the $k^{\mathrm{th}}$ longest queue, for $1\leq k\leq N$ (`S' in the name of the coupling stands for `Server').
Consider three S-coupled systems following respectively the JSQ policy, any scheme from the class CJSQ, and the MJSQ scheme. 
Recall that $Q^\Pi_i(t)$ is the number of servers with at least $i$ tasks at time $t$ and $L^\Pi(t)$ is the total number of lost tasks up to time $t$, for the schemes $\Pi=$ JSQ, CJSQ, MJSQ. 
The following proposition provides a stochastic ordering for any scheme in the class CJSQ with respect to the ordinary JSQ policy and the MJSQ scheme.
\begin{proposition}\label{prop:stoch-ord}
For any fixed $m\geq 1$,
\begin{enumerate}[{\normalfont(i)}] 
\item\label{item:jsq-cjsq} $
\left\{\sum_{i=m}^bQ_i^{\JSQ}(t)+L^{\JSQ}(t)\right\}_{t\geq 0}\leq_{\mathrm{st}}
\left\{\sum_{i=m}^bQ_i^{\CJSQ}(t)+L^{\CJSQ}(t)\right\}_{t\geq 0},
$
\item\label{item:cjsq-mjsq} $\left\{\sum_{i=m}^bQ_i^{\CJSQ}(t)+L^{\CJSQ}(t)\right\}_{t\geq 0}\leq_{\mathrm{st}}
\left\{\sum_{i=m}^bQ_i^{\MJSQ}(t)+L^{\MJSQ}(t)\right\}_{t\geq 0},$
\end{enumerate}
provided the inequalities hold at time $t=0$.
\end{proposition}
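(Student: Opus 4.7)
My plan is to invoke Proposition~\ref{prop:det ord} pathwise under the S-coupling, so that the deterministic ordering of the ensembles upgrades to the desired stochastic ordering of the marginals. Under the S-coupling, all three systems share the same Poisson arrival epochs, and for each $k \in \{1, \ldots, N\}$ the rate-$1$ potential service-completion clock attached to the queue occupying the $k^{\mathrm{th}}$ ranked position (with a common fixed tie-breaking rule) is shared across the three systems. Because service times are exponential and servers exchangeable, replacing physical-server clocks by rank-indexed clocks does not alter the law of the ordered queue-length vector, so this defines a genuine coupling of the three queueing processes.

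For part~(\ref{item:jsq-cjsq}), let ensemble $A$ be the JSQ system and ensemble $B$ be the CJSQ$(n(N))$ scheme, each viewed as an ordered stack configuration. At every arrival epoch, JSQ places the new item on the lowest ranked stack, so $n_A = 1$, whereas CJSQ places it on one of the $n(N)+1$ lowest ranked stacks, so $n_B \in \{1, \ldots, n(N)+1\}$; in particular $n_A \leq n_B$. At every departure epoch of the $k^{\mathrm{th}}$ ranked queue, the S-coupling removes an item from the $k^{\mathrm{th}}$ stack of both ensembles (when nonempty), or from neither. Hence the pair $(A,B)$ follows Rule$(n_A,n_B,k)$ with $n_A \leq n_B$ at every jump epoch, and by Proposition~\ref{prop:det ord} the inequality~\eqref{eq:det ord} is preserved for every $m \leq b$, provided it holds at time~$0$. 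This yields the almost-sure, hence stochastic, ordering claimed in~(\ref{item:jsq-cjsq}).

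For part~(\ref{item:cjsq-mjsq}), take ensemble $A$ to be the CJSQ scheme and ensemble $B$ to be the MJSQ$(n(N))$ scheme. Now CJSQ places arrivals at some $n_A \in \{1, \ldots, n(N)+1\}$ while MJSQ always places them at $n_B = n(N)+1$, so again $n_A \leq n_B$; departures are synchronized exactly as before. Proposition~\ref{prop:det ord} again delivers the ordering pathwise under the coupling, establishing~(\ref{item:cjsq-mjsq}).

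The main subtlety I anticipate is justifying that the S-coupling is a \emph{bona fide} coupling of the three queueing systems and not just a bookkeeping device. The resolution is that, by the exchangeability of servers and the memorylessness of exponential services, one may attach an independent rate-$1$ clock to each rank rather than to each physical server without changing the marginal law of the ordered occupancy process; the rank-indexed clocks then serve as a common random source across the three systems. Once this is accepted, the remainder is a routine induction on jump epochs combined with Proposition~\ref{prop:det ord}.
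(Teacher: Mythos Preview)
Your proposal is correct and follows essentially the same approach as the paper: S-couple the systems, observe that the assignment ranks satisfy $n_{\JSQ}\le n_{\CJSQ}\le n_{\MJSQ}$ almost surely at every arrival epoch, and then apply Proposition~\ref{prop:det ord} pathwise to obtain the ordering. The paper's proof is in fact a terse version of exactly what you wrote; your additional discussion of why rank-indexed clocks yield a valid coupling is a welcome elaboration but not a departure in method.
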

The above proposition has the following immediate corollary, which will be used to prove bounds on the fluid and the diffusion scale.
\begin{corollary}\label{cor:bound}
In the joint probability space constructed by the S-coupling of the three systems under respectively \emph{JSQ}, \emph{MJSQ}, and any scheme from the class \emph{CJSQ}, the following ordering is preserved almost surely throughout the sample path: for any fixed $m\geq 1$
\begin{enumerate}[{\normalfont(i)}]
\item $Q_m^{\CJSQ}(t)\geq \sum_{i=m}^{b}Q_i^{\JSQ}(t)-\sum_{i=m+1}^{b}Q_i^{\MJSQ}(t)+L^{\JSQ}(t)-L^{\MJSQ}(t)$,
\item $Q_m^{\CJSQ}(t)
\leq \sum_{i=m}^{b}Q_i^{\MJSQ}(t)-\sum_{i=m+1}^{b}Q_i^{\JSQ}(t)+L^{\MJSQ}(t)-L^{\JSQ}(t),$
\end{enumerate}
provided the inequalities hold at time $t=0$.
\end{corollary}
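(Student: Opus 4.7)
The plan is to derive Corollary~\ref{cor:bound} as a direct pathwise consequence of Proposition~\ref{prop:stoch-ord} inside the joint probability space built by the S-coupling. In that coupled space the two stochastic inequalities of Proposition~\ref{prop:stoch-ord} are in fact realized as almost sure pathwise inequalities, since they are obtained by applying the deterministic step-by-step ordering of Proposition~\ref{prop:det ord} to the three S-coupled systems. Consequently, for every $m\geq 1$ and every $t\geq 0$ I may treat
$$\sum_{i=m}^b Q_i^{\JSQ}(t) + L^{\JSQ}(t) \;\leq\; \sum_{i=m}^b Q_i^{\CJSQ}(t) + L^{\CJSQ}(t) \;\leq\; \sum_{i=m}^b Q_i^{\MJSQ}(t) + L^{\MJSQ}(t)$$
as a simultaneous almost sure relation, valid at all levels $m$ provided the initial ordering holds.

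The heart of the argument is the trivial identity
$$Q_m^{\CJSQ}(t) \;=\; \sum_{i=m}^b Q_i^{\CJSQ}(t) \;-\; \sum_{i=m+1}^b Q_i^{\CJSQ}(t),$$
combined with Proposition~\ref{prop:stoch-ord} applied at two \emph{different} levels on the right-hand side. For the lower bound in part~(i), I use the left-hand inequality at level $m$ to bound the first sum from below and the right-hand inequality at level $m+1$ to bound the second sum from above; the two occurrences of $L^{\CJSQ}(t)$ then enter with opposite signs and cancel exactly, leaving the stated expression involving only JSQ and MJSQ quantities. The upper bound in part~(ii) follows symmetrically: use the right-hand inequality at level $m$ for the first sum and the left-hand inequality at level $m+1$ for the second, and again the CJSQ tail sums and cumulative losses cancel cleanly.

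The only point requiring genuine care is the bookkeeping with the index shift between $m$ and $m+1$ and the accompanying cancellation of the $L^{\CJSQ}$ terms; once these are arranged correctly, there is no further probabilistic content to verify. The boundary case $m=b$ is handled by the convention that the sum $\sum_{i=m+1}^b$ is empty, and the hypothesis that the inequalities hold at $t=0$ is inherited directly from the corresponding hypothesis in Proposition~\ref{prop:stoch-ord}, applied simultaneously at levels $m$ and $m+1$. The main obstacle, such as it is, lies in realizing that a \emph{two-level} invocation of Proposition~\ref{prop:stoch-ord} is needed to single out $Q_m^{\CJSQ}(t)$ from the cumulative tail sums; beyond that algebraic observation, the corollary reduces to two lines of arithmetic at each time $t$.
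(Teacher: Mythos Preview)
Your argument is correct and is exactly what the paper intends: the corollary is stated there as an immediate consequence of Proposition~\ref{prop:stoch-ord}, and the two-level application you describe (writing $Q_m^{\CJSQ}$ as the difference of consecutive tail sums, then bounding each with the appropriate side of Proposition~\ref{prop:stoch-ord} so that the $L^{\CJSQ}$ terms cancel) is precisely the intended arithmetic. There is nothing to add.
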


\begin{proof}[Proof of Proposition~\ref{prop:stoch-ord}]
We first S-couple the concerned systems. 
Let us say that an incoming task is assigned to the $n_\Pi^{\mathrm{th}}$ ordered server under scheme $\Pi$, $\Pi$= JSQ, CJSQ, MJSQ. Then observe that, under the S-coupling, almost surely, 
$n_\JSQ\leq n_\CJSQ\leq n_\MJSQ.$ Therefore, Proposition~\ref{prop:det ord} ensures that in the probability space constructed through the S-coupling, the ordering is preserved almost surely throughout the sample path. 
\end{proof}
\begin{remark}
\normalfont
Note that $\sum_{i=1}^b\min\big\{Q_i,k\big\}$ represents the aggregate size of the rightmost $k$ stacks, i.e., the $k$ longest queues.
Using this observation, the stochastic majorization property
of the JSQ policy as stated in \cite{towsley, Towsley95,Towsley1992}
can be shown following similar arguments as in the proof of Proposition~\ref{prop:stoch-ord}.
Conversely, the stochastic ordering between the JSQ policy and the MJSQ scheme presented in Proposition~\ref{prop:stoch-ord} can also be derived from the weak majorization arguments developed in \cite{towsley, Towsley95,Towsley1992}. But it is only through the stack arguments developed in the previous subsection that we could extend the results to compare any scheme from the class CJSQ with the scheme MJSQ as well
as in Proposition~\ref{prop:stoch-ord}~\eqref{item:cjsq-mjsq}.
\end{remark}

To analyze the JSQ$(d(N))$ scheme, we need a further stochastic comparison argument.
Consider two S-coupled systems following schemes $\Pi_1$ and $\Pi_2$.
Fix a specific arrival epoch, and let the arriving task join the $n_{\Pi_i}^{\mathrm{th}}$ ordered server in the $i^{\mathrm{th}}$ system following scheme $\Pi_i$, $i=1,2$ (ties can be broken arbitrarily in both systems). 
We say that at a specific arrival epoch the two systems \emph{differ in decision}, if $n_{\Pi_1}\neq n_{\Pi_2}$,
and denote by $\Delta_{\Pi_1,\Pi_2}(t)$ the cumulative number of times the two systems differ in decision up to time~$t$.

\begin{proposition}\label{prop:stoch-ord2}
For two S-coupled systems under schemes $\Pi_1$ and $\Pi_2$ the following inequality is preserved almost surely
\begin{equation}\label{eq:stoch-ord2}
\sum_{i=1}^b |Q_i^{\Pi_1}(t)-Q_i^{\Pi_2}(t)|\leq 2\Delta_{\Pi_1,\Pi_2}(t)\qquad\forall\ t\geq 0,
\end{equation}
provided the two systems start from the same occupancy state at $t=0$, i.e., $Q_i^{\Pi_1}(0)= Q_i^{\Pi_2}(0)$ for all $i=1,2,\ldots, b$.
\end{proposition}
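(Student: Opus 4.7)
The plan is to introduce the $\ell^1$-type potential
\[
D(t) := \sum_{i=1}^b |Q_i^{\Pi_1}(t) - Q_i^{\Pi_2}(t)|
\]
and show that, event by event along the sample path of the S-coupled pair, $D$ is non-increasing at every departure epoch and at every arrival epoch where the two systems agree in decision, and jumps up by at most $2$ at an arrival where the decisions differ. Combined with $D(0)=0$, this immediately yields $D(t)\leq 2\Delta_{\Pi_1,\Pi_2}(t)$ almost surely, for all $t\geq 0$.

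The first step is to recast $D$ in sorted coordinates. Writing $\tilde q_k^\Pi(t)$ for the $k$-th smallest queue length in system $\Pi$ at time $t$, one checks by direct calculation (interchanging summations and using that $k\mapsto \mathbbm{1}[\tilde q_k^\Pi\geq i]$ is a nondecreasing step function, so signs do not cancel in the sum over $k$ for fixed $i$) that
\[
D(t) = \sum_{k=1}^N |\tilde q_k^{\Pi_1}(t)-\tilde q_k^{\Pi_2}(t)|.
\]
The main tool is then the elementary rearrangement inequality: for any $u,v\in\R^N$ with common nondecreasing rearrangements $u^\uparrow,v^\uparrow$, one has $\sum_\ell |u_\ell^\uparrow - v_\ell^\uparrow|\leq \sum_\ell |u_\ell - v_\ell|$.

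To analyse an event at time $t$, set $x_\ell := \tilde q_\ell^{\Pi_1}(t^-)$, $y_\ell := \tilde q_\ell^{\Pi_2}(t^-)$, and let $\hat x,\hat y$ denote the coordinate-wise post-event vectors \emph{before} re-sorting. Under the S-coupling, a rank-$k$ departure perturbs the common coordinate $m=N-k+1$ of $x$ and $y$ by $-1$ whenever the respective entry is positive, while an arrival perturbs coordinate $n_{\Pi_1}$ of $x$ and coordinate $n_{\Pi_2}$ of $y$ by $+1$ unless the corresponding buffer is already full. Applying the rearrangement inequality to the post-event sorted vectors gives $D(t)\leq \sum_\ell |\hat x_\ell - \hat y_\ell|$, and this right-hand side is straightforward to compare with $D(t^-)=\sum_\ell |x_\ell - y_\ell|$ term by term: at a departure and at an arrival with $n_{\Pi_1}=n_{\Pi_2}$ only the single common position is perturbed, and the contribution there either stays the same (both entries shift together) or strictly decreases (only one shifts, which forces the other to sit at the boundary $0$ or $b$), so $D$ cannot increase; at an arrival with $n_{\Pi_1}\neq n_{\Pi_2}$ two distinct positions are perturbed, each changing its local term by at most $1$, so $D$ jumps by at most $2$. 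The main obstacle lies in the re-sorting of the queue-length vector triggered by each event, which the rearrangement inequality absorbs uniformly; summing the per-event bounds along the sample path together with $D(0)=0$ then completes the proof.
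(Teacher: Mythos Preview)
Your proof is correct and takes a genuinely different route from the paper's. Both argue by forward induction on event times, but the bookkeeping is organised differently.

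The paper works directly in the $Q_i$-coordinates. At each event it identifies the two indices $I_{\Pi_1}(k)+1$ and $I_{\Pi_2}(k)+1$ (or $I_{\Pi_1}(k)$ and $I_{\Pi_2}(k)$, for a departure) at which the occupancy vectors change, and then uses the observation that, whenever these indices differ, one of the two affected absolute differences necessarily goes up by one while the other goes down by one. This requires a short but non-trivial verification that the relevant $Q_i$-values are ordered the right way (e.g.\ $Q_{I_{\Pi_1}(k)+1}^{\Pi_1} < Q_{I_{\Pi_1}(k)+1}^{\Pi_2}$), obtained from the definition of $I_\Pi$.

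Your approach first passes to the sorted queue-length vector via the identity $\sum_i |Q_i^{\Pi_1}-Q_i^{\Pi_2}| = \sum_k |\tilde q_k^{\Pi_1}-\tilde q_k^{\Pi_2}|$ (which is valid exactly for the reason you give: for fixed $i$ the two step functions $k\mapsto \mathbbm{1}[\tilde q_k^\Pi\ge i]$ are monotone, so their difference is single-signed). Once in these coordinates, each event touches a single position (or two, when decisions differ), and the only remaining issue---that the event may disturb the sorted order---is absorbed in one stroke by the $\ell_1$ rearrangement inequality. What you gain is modularity: the local per-position analysis is trivial, and the global re-sorting is handled by a standard lemma, so no case-by-case ordering of $Q_i$-values is needed. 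What the paper's approach gains is that it never leaves the $Q_i$-representation used throughout the rest of the argument, and it shows slightly more (namely that $D$ is exactly preserved, not merely non-increased, at same-decision arrivals and at interior departures).
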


\begin{proof}
We will again use forward induction on the event times of arrivals and departures. Let the inequality~\eqref{eq:stoch-ord2} hold at time epoch $t_0$, and let $t_1$ be the next event time.
We distinguish between two cases, depending on whether $t_1$ is an arrival epoch or a departure epoch. 

If $t_1$ is an arrival epoch and the systems differ in decision, then observe that the left side of \eqref{eq:stoch-ord2} can only increase by two. In this case, the right side also increases by two, and the inequality is preserved.
Therefore, it is enough to prove that the left side of \eqref{eq:stoch-ord2} remains unchanged if the two systems do not differ in decision. In that case, assume that both $\Pi_1$ and $\Pi_2$ assign the arriving task to the $k^{\mathrm{th}}$ ordered server.
Recall from the proof of Proposition~\ref{prop:det ord} the definition of $I_\Pi$ for some scheme $\Pi$. 
If $I_{\sss\Pi_1}(k)=I_{\sss\Pi_2}(k)$, then the left side of \eqref{eq:stoch-ord2} clearly remains unchanged. Now, without loss of generality, assume $I_{\sss\Pi_1}(k)<I_{\sss\Pi_2}(k)$. Therefore, 
$$Q_{\sss I_{\Pi_1}(k)+1}^{\Pi_1}(t_0)< Q_{\sss I_{\Pi_1}(k)+1}^{\Pi_2}(t_0).$$
After an arrival, the $ (I_{\Pi_1}(k)+1)$-th term in the left side of \eqref{eq:stoch-ord2} decreases by one, and the $ (I_{\Pi_2}(k)+1)$-th term may increases by at most one. Thus the inequality is preserved.

If $t_1$ is a departure epoch, then due to the S-coupling, without loss of generality, assume that a potential departure occurs from the $k^{\mathrm{th}}$ ordered server. Also note that a departure in either of the two systems can change at most one of the $Q_i$-values. 
If at time epoch $t_0$, $I_{\Pi_1}(k)=I_{\Pi_2}(k)=i$, then both $Q_{\sss i}^{\Pi_1}$ and $Q_{\sss i}^{\Pi_2}$ decrease by one, and hence the left side of \eqref{eq:stoch-ord2} does not change. 
Otherwise, without loss of generality assume $I_{\Pi_1}(k)<I_{\Pi_2}(k).$ Then observe that 
$$Q_{\sss I_{\Pi_2}(k)}^{\Pi_1}(t_0)<Q_{\sss I_{\Pi_2}(k)}^{\Pi_2}(t_0).$$ 
Furthermore, after the departure, $Q_{\sss I_{\Pi_1}(k)}^{\Pi_1}$ may decrease at most by one. Therefore $|Q_{\sss I_{\Pi_1}(k)}^{\Pi_1}- Q_{\sss I_{\Pi_1}(k)}^{\Pi_2}|$ may increase at most by one, and $Q_{\sss I_{\Pi_2}(k)}^{\Pi_2}$ decreases by one, thus $|Q_{\sss I_{\Pi_2}(k)}^{\Pi_1}- Q_{I_{\sss \Pi_2}(k)}^{\Pi_2}|$ decreases by one. Hence, in total, the left side of \eqref{eq:stoch-ord2} either remains the same or decreases by one.
\end{proof}

\subsection{Comparing the JSQ(d) and CJSQ(n) schemes}
We will now introduce the JSQ$(n,d)$ scheme with $n,d\leq N$, which is an intermediate blend between 
the CJSQ$(n)$ schemes and the JSQ$(d)$ scheme.
The JSQ$(n,d)$ scheme will be seen in a moment to  be a scheme in the CJSQ$(n)$ class. 
It will also be seen to approximate the JSQ$(d)$ scheme closely.
We now specify the JSQ$(d,n)$ scheme. At its first step, just as in the JSQ$(d)$ scheme, it first chooses the shortest of $d$ random candidates but only sends this to that server's queue if it is one of the $n+1$ shortest queues. 
If it is not, then at the second step it picks any of the $n+1$ shortest queues uniformly at random and then sends to that server's queue. 
As was mentioned earlier, by construction, JSQ$(d,n)$ is a scheme in CJSQ$(n)$.

We now consider two S-coupled systems with a JSQ$(d)$ and a JSQ$(n,d)$ scheme.
Assume that at some specific arrival epoch, the incoming task is dispatched to the $k^{\mathrm{th}}$ ordered server in the system under the JSQ($d$) scheme. If $k\in\{1,2,\ldots,n+1\}$, then the system under JSQ$(n,d)$ scheme also assigns the arriving task to the $k^{\mathrm{th}}$ ordered server. 
Otherwise, it dispatches the arriving task uniformly at random among the first $(n+1)$ ordered servers.

In the next proposition we will bound the number of times these two systems differ in decision on any finite time interval.
For any $T\geq 0$, let $A(T)$ and $\Delta(T)$ be the total number of arrivals to the system and the cumulative number of times that the JSQ($d$) scheme and JSQ$(n,d)$ scheme differ in decision up to time $T$.
\begin{proposition}\label{prop:differ}
For any $T\geq 0$, and $M>0,$
\begin{equation}
\Pro{\Delta(T)\geq M\given A(T)}\leq \frac{A(T)}{M}\left(1-\frac{n}{N}\right)^{d}.
\end{equation}
\end{proposition}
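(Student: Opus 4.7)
\textbf{Proof plan for Proposition~\ref{prop:differ}.}

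The plan is to apply Markov's inequality to $\Delta(T)$ after controlling the conditional probability that the two S-coupled systems differ at each individual arrival epoch. Fix an arrival epoch and condition on the entire history up to that instant. By construction of JSQ$(n,d)$ and JSQ$(d)$, both schemes sample the same $d$ candidate servers (we may assume this as part of the coupling), and both compute the shortest queue among those $d$ candidates. If this shortest candidate happens to be one of the $n+1$ lowest ordered servers, then JSQ$(n,d)$ also dispatches to it, so the two schemes agree. Consequently, they can only differ in decision when none of the $d$ sampled servers lies in positions $1,2,\ldots,n+1$ of the queue-length ordering.

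Next, I would bound the conditional probability of this bad event. Since the $d$ candidates are chosen uniformly at random (with replacement, as noted in the remark at the end of Section~\ref{sec: model descr}, and the bound only improves without replacement), the probability that a single sample avoids the first $n+1$ ordered positions is $(N-n-1)/N$, and the probability that all $d$ independent samples avoid them is $\bigl((N-n-1)/N\bigr)^d \leq (1-n/N)^d$. This bound holds irrespective of the history up to that arrival epoch, because the sampling at each arrival is independent of the past given the current state, and the bound depends only on $N$, $n$, and $d$.

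Let $D_j$ be the indicator that the two schemes differ in decision at the $j$-th arrival. The previous paragraph yields $\E{D_j\given[\big] \mathcal{F}_{j-}}\leq (1-n/N)^d$ for every $j$, where $\mathcal{F}_{j-}$ is the natural filtration just before the $j$-th arrival. Summing and taking conditional expectations given $A(T)$,
\begin{equation*}
\E{\Delta(T)\given[\big] A(T)}
= \E{\sum_{j=1}^{A(T)} D_j \;\Big|\; A(T)}
\leq A(T)\left(1-\frac{n}{N}\right)^{d}.
\end{equation*}
Applying the conditional Markov inequality to $\Delta(T)$ then yields the required bound
\begin{equation*}
\Pro{\Delta(T)\geq M\given[\big] A(T)}
\leq \frac{\E{\Delta(T)\given[\big] A(T)}}{M}
\leq \frac{A(T)}{M}\left(1-\frac{n}{N}\right)^{d}.
\end{equation*}

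There is no real obstacle beyond being precise about the coupling of samples and about the fact that conditioning on $A(T)$ preserves the per-arrival bound; the combinatorial content is just that avoiding $n+1$ of the $N$ positions on every one of $d$ draws has probability at most $(1-n/N)^d$. The only point to handle carefully is tie-breaking: one should agree on a consistent rule (e.g.\ smallest index among servers of minimum queue length) so that ``shortest candidate lies in positions $1,\ldots,n+1$'' is equivalent to ``at least one of the $d$ samples lies in positions $1,\ldots,n+1$'', which is exactly what makes the combinatorial estimate above valid.
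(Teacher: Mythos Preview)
Your proposal is correct and follows essentially the same approach as the paper: both identify that a disagreement can occur only when the $d$ sampled servers all miss the lowest-ordered positions, bound that probability by $(1-n/N)^d$, and finish with Markov's inequality conditional on $A(T)$. The only cosmetic differences are that the paper uses the $n$ lowest positions (giving $(1-n/N)^d$ directly) and states the Binomial distribution in one line, whereas you use the $n+1$ lowest positions and the intermediate bound $((N-n-1)/N)^d\le(1-n/N)^d$, and you spell out the filtration/indicator bookkeeping and tie-breaking more explicitly.
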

\begin{proof}
Observe that at any arrival epoch, the systems under the JSQ$(d)$ scheme and the JSQ$(n,d)$ scheme will differ in decision 
only if none of the $n$ lowest ordered servers gets selected by the JSQ$(d)$ scheme.
Now, at any arrival epoch, the probability that the JSQ($d$) scheme does not select any of the $n$ lowest ordered servers, is given by 
$$p=\left(1-\frac{n}{N}\right)^{d}.$$
Since at each arrival epoch, $d$ servers are selected independently, given $A(T)$, 
$$\Delta(T)\sim \mbox{Bin}(A(T),p).$$
Therefore, for $T\geq 0$, Markov's inequality yields, for any fixed $M>0$,
$$\Pro{\Delta(T)\geq M\given A(T)}\leq \frac{\E{\Delta(T)\given A(T)}}{M}=\frac{A(T)}{M}\left(1-\frac{n}{N}\right)^{d}.$$
\end{proof}

\section{Fluid-Limit Proofs}\label{sec:fluid}
In this section we prove the fluid-limit results for the JSQ$(d(N))$ scheme stated in Theorems~\ref{fluidjsqd} and~\ref{th:batch}.
As mentioned in Subsection~\ref{subsec:strategy}, the fluid limit for the ordinary JSQ policy is provided in Subsection~\ref{ssec:fluidjsq}, and in Subsection~\ref{ssec:equivjsq} we prove a universality result establishing that under the condition that $d(N)\to\infty$ as $N\to\infty$, the fluid limit for the JSQ$(d(N))$ scheme coincides with that for the ordinary JSQ policy.

\subsection{Fluid limit of JSQ}\label{ssec:fluidjsq}
In this section we establish the fluid limit for the ordinary JSQ policy and the interchange of limits result stated in Proposition~\ref{th:interchange}.
In the proof we will leverage the time scale separation technique developed in~\cite{HK94}, suitably extended to an infinite-dimensional space. 
As mentioned in the introduction, to the best of our knowledge, this is the first time the transient fluid limit of the ordinary JSQ policy is rigorously established.
We also observe that in order to exploit the coupling framework in Section~\ref{ssec:stoch-ord} and in particular Proposition~\ref{prop:stoch-ord}, we need convergence of tail-sums.
Thus we need to establish the fluid convergence result with respect to the $\ell_1$ topology, which makes the analysis technically challenging.

To leverage the time scale separation technique, note that the rate at which incoming tasks join a server
with $i$ active tasks is determined only by the process $\ZZ^N(\cdot)=(Z_1^N(\cdot),\ldots,Z_b^N(\cdot))$, where $Z_i^N(t)=N-Q_i^N(t)$, $i=1,\ldots,b$, represents the number of servers with fewer than $i$ tasks at time $t$.
Furthermore, the dynamics of the $\ZZ^N(\cdot)$ process can be described as
\begin{equation}\label{eq:prelimit-slowprocess}
\ZZ^N \rightarrow 
\begin{cases}
\ZZ^N+e_i& \quad\mbox{ at rate }\quad N(q_i-q_{i+1}),\\
\ZZ^N-e_i& \quad\mbox{ at rate }\quad N\lambda \ind{\ZZ^N\in\mathcal{R}_{i}},
\end{cases}
\end{equation}
where $e_i$ is the $i^{\mathrm{th}}$ unit vector, and 
\begin{equation}\label{eq:partition}
\mathcal{R}_i := \big\{(z_1,z_2,\ldots, z_b): z_1=\ldots=z_{i-1}=0<z_i\leq z_{i+1}\leq\ldots\leq z_b\big\}\in\mathcal{G},
\end{equation}
$i=1,2,\ldots,b$,
with the convention that $Q^N_{b+1}$ is always taken to be zero, if $b<\infty$.
Observe that  in any time interval $[t,t+\varepsilon]$ of length $\varepsilon>0$, the $\ZZ^N(\cdot)$ process experiences $O(\varepsilon N)$ events (arrivals and departures), 
while the $\qq^N(\cdot)$ process can change by only $O(\varepsilon)$ amount.
In other words, loosely speaking, around a `small' neighborhood of time $t$, the $q_i(t)$'s are constants, while as $N\to \infty$, the process $\ZZ^N(\cdot)$ behaves as a {\em time-scaled} version of the following process:
\begin{equation}
\ZZ_{\qq(t)} \rightarrow 
\begin{cases}
\ZZ_{\qq(t)}+e_i& \quad\mbox{ at rate }\quad q_i(t)-q_{i+1}(t),\\
\ZZ_{\qq(t)}-e_i& \quad\mbox{ at rate }\quad \lambda \ind{\ZZ_{\qq(t)}\in\mathcal{R}_{i}}.
\end{cases}
\end{equation}
Therefore, the $\ZZ^N(\cdot)$ process evolves on a much faster time scale than
the $\qq^N(\cdot)$ process.
As a result, in the limit as $N\to\infty$, at each time point $t$, the $\ZZ^N(\cdot)$ process
achieves stationarity depending on the instantaneous value of the $\qq^N(\cdot)$ process, i.e., a separation of time scales takes place. 
In order to establish the time-scale separation and the fluid limit results, we first write the evolution of the occupancy states in terms of a suitable random measure (see~\eqref{eq:martingale rep assumption 2-2}) and establish in Proposition~\ref{prop:rel compactness} that the sequence of joint occupancy process and the random measure is relatively compact. 
We also characterize the limit of any convergence subsequence, where we invoke analogous arguments as used in the proofs of~\cite[Lemma 2]{HK94} and \cite[Theorem 3]{HK94} to complete the proof of the separation of time scales.
The proof of the fluid limit result is then completed by establishing uniqueness of the instantaneous stationary distribution achieved by the fast process, given any fluid-scaled occupancy state.

Denote by $\bZ_+$ the one-point compactification of the set of nonnegative integers~$\Z_+$, i.e., $\bZ_+=\Z_+\cup\{\infty\}$.
Equip $\bZ_+$ with the order topology. Denote $G=\bZ_+^b$ equipped with product topology, and with the Borel $\sigma$-algebra $\mathcal{G}$.
Let us consider the $G$-valued process $\ZZ^N(s):=\big(Z_i^N(s)\big)_{i\geq 1}$ as introduced above.
Note that for the ordinary JSQ policy, the probability that a task arriving at (say) $t_k$ is assigned to 
some server with $i$ active tasks is given by $p_{i-1}^N(\QQ^N(t_k-))=\ind{\ZZ^N(t_k-)\in\mathcal{R}_{i}}$, where 
$\mathcal{R}_i$ is as in~\eqref{eq:partition}.
We prove the following fluid-limit result for the ordinary JSQ policy.
Recall the definition of $m(\qq)$ in Subsection~\ref{sec:fluidresult}. If $m(\qq)>0$, then define
\begin{equation}\label{eq:fluid-gen}
p_{i}(\qq)=
\begin{cases}
\min\big\{(1-q_{m(\qq)+1})/\lambda,1\big\} & \quad\mbox{ for }\quad i=m(\qq)-1,\\
1 - p_{\sss m(\qq) - 1}(\qq) & \quad\mbox{ for }\quad i=m(\qq),\\
0&\quad \mbox{ otherwise,}
\end{cases}
\end{equation}
and else, define $p_0(\qq) = 1$ and $p_i(\qq) = 0$ for all $i = 1,\ldots,b$.
\begin{theorem}[{Fluid limit of JSQ}]
\label{th:genfluid}
Assume $\qq^N(0)\pto \qq^\infty$ in $\mathcal{S}$ and $\lambda(N)/N\to\lambda>0$ as $N\to\infty$. Then any subsequence of the sequence of processes $\big\{\qq^N(t)\big\}_{t\geq 0}$ for the ordinary JSQ policy has a further subsequence that converges weakly with respect to the Skorohod $J_1$ topology to the limit $\{\qq(t)\}_{t\geq 0}$ satisfying the following system of integral equations
\begin{equation}\label{eq:fluidfinal}
q_i(t) = q_i(0)+\lambda \int_0^t p_{i-1}(\qq(s))\dif s - \int_0^t (q_i(s)-q_{i+1}(s))\dif s, \quad i=1,2,\ldots,b,
\end{equation}
where $\qq(0) = \qq^\infty$ and the coefficients $p_i(\cdot)$ are as
defined in~\eqref{eq:fluid-gen}.
\end{theorem}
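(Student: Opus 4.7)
The plan is to combine a semimartingale representation of the Markov dynamics with the Hunt-Kurtz averaging principle (\cite{HK94}) on the infinite-dimensional state space $G=\bZ_+^b$, and then to promote convergence from the product topology to the $\ell_1$ topology via a tail-tightness estimate.

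First, for each $i=1,\ldots,b$, I will write
\begin{equation*}
q_i^N(t) = q_i^N(0) + \frac{\lambda(N)}{N}\int_0^t \ind{\ZZ^N(s-)\in\mathcal{R}_i}\,\dif s - \int_0^t (q_i^N(s)-q_{i+1}^N(s))\,\dif s + \mathcal{M}_i^N(t),
\end{equation*}
where $\mathcal{M}_i^N$ is a local martingale whose predictable quadratic variation is $O(1/N)$ and thus vanishes uniformly on compacts. Coordinate-wise tightness of $\{q_i^N(\cdot)\}$ in $D_{[0,1]}[0,\infty)$ will follow from the Aldous-Rebolledo criterion, giving at once tightness of $\{\qq^N(\cdot)\}$ in $D_{\mathcal{S}}[0,\infty)$ under the product topology.

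Next, I will exploit the fact that $\qq^N$ is slow (jumps of size $1/N$) while $\ZZ^N$ is fast ($O(N)$ jumps per unit time) to invoke the Hunt-Kurtz framework: along any convergent subsequence there exists a random measure $\pi_s$ on $G$ such that
\begin{equation*}
\int_0^t g(\qq^N(s))\,\ind{\ZZ^N(s)\in\mathcal{R}_i}\,\dif s \longrightarrow \int_0^t\!\int_G g(\qq(s))\,\ind{\zz\in\mathcal{R}_i}\,\pi_s(\dif\zz)\,\dif s
\end{equation*}
for every bounded continuous $g$, and $\pi_s$ is almost surely supported on invariant measures of the fast dynamics with $\qq$ frozen at $\qq(s)$. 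To identify $\pi_s$ at a slow state $\qq$ with $m=m(\qq)\geq 1$ and $q_{m+1}<1$, I note that the frozen fast dynamics reduces to an essentially two-state alternation: service completions at the $(1-q_{m+1})$-fraction of queues with exactly $m$ tasks briefly push the state into $\mathcal{R}_m$, while arrivals (at rate $\lambda$) immediately refill it back into $\mathcal{R}_{m+1}$. Solving the local balance yields invariant mass $\min\{(1-q_{m+1})/\lambda,1\}$ on $\mathcal{R}_m$, exactly matching \eqref{eq:fluid-gen}. Substituting into the semimartingale representation and letting $N\to\infty$ along the subsequence then produces \eqref{eq:fluidfinal}.

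To close the argument, I will need two further ingredients. First, an $\ell_1$ tail-tightness bound: since the product topology does not control $\sum_{i>M}q_i^N(t)$, I will bound this tail uniformly in $N$ and in $t\leq T$ by combining the hypothesis $\qq^N(0)\to\qq^\infty$ in $\ell_1$ with a stochastic comparison of the total number of tasks against an M/M/$1$-type upper bound that uses $\lambda<1$ to keep the tail summable. Second, uniqueness of solutions to \eqref{eq:fluidfinal}: although $\qq\mapsto p_i(\qq)$ is only piecewise continuous, monotonicity of $m(\qq(\cdot))$ along any solution together with a Lipschitz comparison on finite truncations should give uniqueness for any initial $\qq^\infty\in\mathcal{S}$, hence upgrade subsequential to full convergence. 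The main obstacle will be the identification step in Hunt-Kurtz: pinning down the invariant measure of the infinite-dimensional fast process at slow states on the boundary $q_m=1$ is delicate because the fast dynamics is degenerate in the higher coordinates, and one must carefully rule out pathological limiting measures $\pi_s$ that are not concentrated on $\bigcup_i\mathcal{R}_i$ with the claimed weights.
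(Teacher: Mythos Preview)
Your overall architecture---semimartingale representation, Hunt--Kurtz averaging for the fast variable $\ZZ^N$, and identification of the invariant measure of the frozen fast dynamics---is exactly the route the paper takes, so on the macroscopic level you are aligned with the paper's proof.

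There is, however, a concrete gap in your $\ell_1$-tightness step. You propose to control $\sum_{i>M}q_i^N(t)$ via an M/M/$1$-type stochastic comparison that ``uses $\lambda<1$,'' but the theorem as stated only assumes $\lambda>0$; no subcriticality is available. The paper obtains $\ell_1$-tightness by a more elementary counting argument that works for every $\lambda>0$: since $\qq^\infty\in\ell_1$, there exists $M=M(t,\qq^\infty)<\infty$ with $\sum_{i<M}(1-q_i^\infty)>\lambda t$, and then the total number of arrivals in $[0,t]$ is w.h.p.\ insufficient to fill all levels below~$M$ simultaneously; hence under JSQ no task is ever assigned to a server with $M-1$ or more tasks, so $q_i^N(t)\le q_i^N(0)$ for all $i\ge M$, and the assumed $\ell_1$-convergence of $\qq^N(0)$ transfers directly to $\qq^N(t)$. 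This replaces your stability comparison and removes the extraneous hypothesis.

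On the identification step that you correctly flag as the main obstacle: the frozen fast dynamics at a slow state $\qq$ with $m=m(\qq)$ is not really a two-state alternation but a birth--death process for $Z_m$ on the compactification $\bZ_+$, with birth rate $1-q_{m+1}$ and death rate $\lambda\ind{Z_m>0}$ (all lower coordinates are frozen at~$0$ and all higher ones at~$\infty$). The subtle point is that even when $1-q_{m+1}<\lambda$ one must exclude invariant measures placing mass at $\{Z_m=\infty\}$. The paper handles this by a consistency argument rather than a direct ergodicity proof: any such mass would make $\pi_\qq(\mathcal{R}_m)>(1-q_{m+1})/\lambda$ and hence force $\dif q_m/\dif t>0$ in~\eqref{eq:fluidfinal}, contradicting $q_m\equiv 1$. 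This is the mechanism you should use to rule out the pathological limiting measures you anticipate.
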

 The rest of this section will be devoted in the proof of Theorem~\ref{th:genfluid}.
First we construct the martingale representation of the occupancy state process $\QQ^N(\cdot)$.
Note that the component $Q_i^N(t)$, satisfies the identity relation
\begin{align}\label{eq:recursion}
Q_i^N(t)=Q_i^N(0)+A_i^N(t)-D_i^N(t),&\quad\mbox{ for }\quad i=1,\ldots, b,
\end{align}
where
\begin{align*}
A_i^N(t)&=\mbox{ number of arrivals during $[0,t]$ to some server with }i-1\mbox{ active tasks,} \\
D_i^N(t)&=\mbox{ number of departures during $[0,t]$ from some server with }i\mbox{ active tasks}.
\end{align*}
We can express $A^N_i(t)$ and $D_i^N(t)$ as
\begin{align*}
A^N_i(t) &=  \mathcal{N}_{A,i}\left(\lambda(N)\int_0^t p_{i-1}^N(\QQ^N(s))\dif s\right),\\
D_i^N(t) &=  \mathcal{N}_{D,i}\left(\int_0^t(Q^N_i(s)-Q^N_{i+1}(s))\dif s\right),
\end{align*}
where $\mathcal{N}_{A,i}$ and $\mathcal{N}_{D,i}$ are mutually independent unit-rate Poisson processes, $i=1,2,\ldots,b$.
Define the following sigma fields
\begin{align*}
\mathcal{A}^N_i(t)&:= \sigma\left(A^N_i(s): 0\leq s\leq t\right),\\
\mathcal{D}_i^N(t)&:= \sigma\left(D_i^N(s): 0\leq s\leq t\right),\mbox{ for }i = 1,\ldots,b,
\end{align*}
and the filtration $\mathbf{F}^N\equiv\big\{\mathcal{F}^N_t:t\geq 0\big\}$ with
\begin{equation}\label{eq:filtration}
\mathcal{F}^N_t:=\bigvee_{i=1}^{\infty} [\mathcal{A}_i^N(t)\vee \mathcal{D}_i^N(t)]
\end{equation}
augmented by all the null sets. 
Now we have the following martingale decomposition from the random time change of a unit-rate Poisson process result in \cite[Lemma~3.2]{PTRW07}.

\begin{proposition}[Martingale decomposition]
\label{prop:mart-rep}
The following are $\mathbf{F}^N$-martingales, for $i\geq 1$:
\begin{equation}\label{eq:martingales}
\begin{split}
M_{A,i}^N(t)&:=  \mathcal{N}_{A,i}\left(\lambda(N)\int_0^t p_{i-1}^N(\QQ^N(s))\dif s\right)-\lambda(N)\int_0^t p_{i-1}^N(\mathbf{Q}^N(s)) \dif s,\\
M_{D,i}^N(t)&:=\mathcal{N}_{D,i}\left(\int_0^t (Q^N_i(s)-Q^N_{i+1}(s))\dif s\right)-\int_0^t (Q^N_i(s)-Q^N_{i+1}(s))\dif s,
\end{split}
\end{equation}
with respective compensator and predictable quadratic variation processes given by
\begin{align*}
\langle M_{A,i}^N\rangle(t)&:= \lambda(N)\int_0^t p_{i-1}^N(\mathbf{Q}^N(s-))\dif s,\\
\langle M_{D,i}^N\rangle(t)&:=\int_0^t (Q^N_i(s)-Q^N_{i+1}(s))\dif s.
\end{align*}
\end{proposition}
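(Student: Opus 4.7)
The plan is to derive this as a direct consequence of the random time-change theorem for Poisson processes stated as Lemma~3.2 of \cite{PTRW07}, which is already cited just above the proposition; the job reduces to checking that its hypotheses hold in the present setup. Before invoking it, I would verify predictability of the two integrands with respect to the filtration $\mathbf{F}^N$ from~\eqref{eq:filtration}. Since $\QQ^N(\cdot)$ is a pure-jump c\'adl\'ag process determined by the counting processes $A_i^N,D_i^N$ via~\eqref{eq:recursion} and hence adapted to $\mathbf{F}^N$, and since $p_{i-1}^N(\QQ^N(s))=\mathbbm{1}_{[\ZZ^N(s)\in\mathcal{R}_i]}$ is a bounded measurable function of $\QQ^N(s)$, its left-continuous version is $\mathbf{F}^N$-predictable. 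The same reasoning gives predictability of $Q_i^N(\cdot-)-Q_{i+1}^N(\cdot-)$, which is further bounded by~$N$.

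Next I would apply the random time-change theorem to each unit-rate Poisson process $\mathcal{N}_{A,i}$ and $\mathcal{N}_{D,i}$ separately, using their mutual independence. For each such $\mathcal{N}$ and each nondecreasing, continuous, a.s.~finite, $\mathbf{F}^N$-predictable time change $\phi$, the compensated process $\mathcal{N}(\phi(\cdot))-\phi(\cdot)$ is an $\mathbf{F}^N$-martingale. Setting $\phi(t)$ equal to $\lambda(N)\int_0^t p_{i-1}^N(\QQ^N(s))\dif s$ and $\int_0^t(Q_i^N(s)-Q_{i+1}^N(s))\dif s$ respectively identifies $M_{A,i}^N$ and $M_{D,i}^N$ as the corresponding compensated martingales, and exhibits the stated cumulative integrals as their compensators.

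Finally, for the predictable quadratic variations I would use the fact that a compensated unit-rate Poisson process $\tilde{\mathcal{N}}(t)=\mathcal{N}(t)-t$ has unit-size jumps, so its optional quadratic variation is $[\tilde{\mathcal{N}}](t)=\mathcal{N}(t)$ and its predictable quadratic variation is therefore $\langle\tilde{\mathcal{N}}\rangle(t)=t$. Under a continuous predictable time change $\phi$ this transfers to $\langle\tilde{\mathcal{N}}\circ\phi\rangle(t)=\phi(t)$, yielding the two displayed formulas. The only real subtlety in the whole argument lies in the bookkeeping for which $\sigma$-algebras carry the time changes while leaving the Poisson increments conditionally independent; once the definition of $\mathbf{F}^N$ in~\eqref{eq:filtration} is unpacked and the mutual independence of the $\mathcal{N}_{A,i}$, $\mathcal{N}_{D,i}$ is used to show they remain driving Poisson processes relative to $\mathbf{F}^N$, the proposition follows immediately with no further calculation.
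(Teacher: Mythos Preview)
Your proposal is correct and matches the paper's approach: the paper does not give a separate proof but simply states that the proposition follows from the random time change of a unit-rate Poisson process result in \cite[Lemma~3.2]{PTRW07}. Your write-up just fills in the hypothesis-checking details (predictability, boundedness, mutual independence of the driving Poisson processes) that the paper leaves implicit.
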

Therefore, finally we have the following martingale representation of the $N^{\mathrm{th}}$ process:
\begin{equation}\label{eq:mart-unscaled}
\begin{split}
Q_i^N(t)&=Q_i^N(0)+\lambda(N)\int_0^t p_{i-1}^N(\mathbf{Q}^N(s))\dif s
-\int_0^t (Q^N_i(s)-Q^N_{i+1}(s))\dif s \\
&\hspace{4cm}+(M_{A,i}^N(t)-M_{D,i}^N(t)),\quad t\geq 0,\quad i= 1,\ldots,b.
\end{split}
\end{equation}
In the proposition below, we prove that the martingale part vanishes in $\ell_1$ when scaled by~$N$. 

\begin{proposition}[Convergence of martingales]
\label{prop:mart zero1}
$$\left\{\frac{1}{N}\sum_{i\geq 1}(|M_{A,i}^N(t)|+|M_{D,i}^N(t)|)\right\}_{t\geq 0}\dto \big\{m(t)\big\}_{t\geq 0}\equiv 0.$$
\end{proposition}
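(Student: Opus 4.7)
My plan is to truncate the $\ell_1$ sum at a slowly growing level $b_0=b_0(N)\to\infty$ and control the head $i\leq b_0$ and the tail $i>b_0$ by different arguments. For the head, a direct application of Doob's $L^2$-inequality to the compensators in Proposition~\ref{prop:mart-rep} suffices; the tail is the delicate part, which I would control by exploiting the shortest-queue property of JSQ, namely that $A_i^N(\cdot)$ can only increase when every server already holds at least $i-1$ tasks.

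\textbf{Head estimate.}
Fix $i$. Doob's inequality combined with $\langle M_{A,i}^N\rangle(T)\leq \lambda(N)T\leq NT$ and $\langle M_{D,i}^N\rangle(T)\leq \int_0^T Q_1^N(s)\,\dif s\leq NT$ gives $\E{\sup_{s\leq T}|M_{A,i}^N(s)|}+\E{\sup_{s\leq T}|M_{D,i}^N(s)|}\leq 4\sqrt{NT}$, so that summing,
\[
\frac{1}{N}\,\E{\sum_{i\leq b_0}\sup_{s\leq T}\bigl(|M_{A,i}^N(s)|+|M_{D,i}^N(s)|\bigr)}=O\!\left(\tfrac{b_0}{\sqrt{N}}\right),
\]
which vanishes as long as $b_0=o(\sqrt{N})$.

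\textbf{Tail estimate and conclusion.}
Since $\qq^N(0)\pto\qq^\infty$ in $\ell_1$ and arrivals form a Poisson process of rate $\lambda(N)\leq N$, I would first fix $T$ and choose a constant $C=C(T)$ such that, with probability tending to one, the total occupancy satisfies $\sum_i Q_i^N(s)\leq CN$ throughout $[0,T]$; work on this event. Under JSQ an incoming task is routed to a server already containing at least $b_0$ tasks only when \emph{every} server does, i.e.\ $Q_{b_0}^N(s)=N$, equivalently $\sum_i Q_i^N(s)\geq Nb_0$, and Markov's inequality then gives
\[
\int_0^T\ind{Q_{b_0}^N(s)=N}\,\dif s\,\leq\,\int_0^T\frac{\sum_iQ_i^N(s)}{Nb_0}\,\dif s\,\leq\,\frac{CT}{b_0}.
\]
This controls both $\E{\sum_{i>b_0}A_i^N(T)}/N$ and its compensator $\E{\sum_{i>b_0}\lambda(N)\int_0^T p_{i-1}^N(\QQ^N(s))\,\dif s}/N$ by $O(1/b_0)$, and since $|M_{A,i}^N(t)|\leq A_i^N(t)+\lambda(N)\int_0^t p_{i-1}^N(\QQ^N(s))\,\dif s$ is dominated by a non-decreasing quantity, the same bound holds with $\sup_{t\leq T}$ inserted. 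For the departures, the telescoping identity $\sum_{i>b_0}\int_0^T(Q_i^N(s)-Q_{i+1}^N(s))\,\dif s=\int_0^T Q_{b_0+1}^N(s)\,\dif s\leq CNT/b_0$ yields the same $O(1/b_0)$ control on the compensators of $M_{D,i}^N$ and, via $|M_{D,i}^N(t)|\leq D_i^N(t)+\int_0^t(Q_i^N-Q_{i+1}^N)\,\dif s$, on the summed martingales themselves. Choosing e.g.\ $b_0(N)=N^{1/4}$ then combines the head and tail bounds into $\E{\sup_{s\leq T}\tfrac{1}{N}\sum_{i\geq 1}(|M_{A,i}^N(s)|+|M_{D,i}^N(s)|)}\to 0$, which yields weak convergence to the zero process in $D_\R[0,\infty)$. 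The principal obstacle is the tail bound: a naive Cauchy--Schwarz via $\sum_i\sqrt{\langle M_{A,i}^N\rangle(T)}$ diverges without a priori control on the number of active levels, so one really needs the JSQ-specific fact that only a thin layer of queue lengths is ever visited, combined with the occupancy bound inherited from the $\ell_1$ initial condition.
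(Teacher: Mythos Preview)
Your argument is correct and takes a more explicit route than the paper's. The paper bounds $\frac{1}{N}\sum_i|M_{A,i}^N(t)|$ above by $\frac{1}{N}\sum_i \mathcal{N}_{A,i}(\cdot)+\lambda T$, notes that $\frac{1}{N}\sum_i\lambda(N)\int_0^t p_{i-1}^N\to\lambda t$, and argues (rather loosely, citing Mitzenmacher's thesis) that this justifies interchanging $\lim_N$ and $\sum_i$ before invoking Doob termwise; the departure martingales are handled analogously. Your truncation at $b_0(N)$ makes this interchange concrete: the head is controlled uniformly via Doob's $L^2$ bound summed over $b_0$ levels, and the tail by a JSQ-specific occupancy argument that is essentially the content of the paper's later Lemma~\ref{lem:trivialbound}. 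Two minor points: (i)~the pointwise inequality $\ind{Q_{b_0}^N=N}\leq\sum_iQ_i^N/(Nb_0)$ is not Markov's inequality but a direct consequence of $Q_{b_0}^N=N\Rightarrow\sum_iQ_i^N\geq Nb_0$; and (ii)~your final claim of $L^1$ convergence is not quite what the argument delivers, since the tail bound holds only on the high-probability event $E_N=\{\sum_iQ_i^N(s)\leq CN\ \forall s\leq T\}$---you actually obtain convergence in probability, which suffices. A stopping-time version (replace $T$ by $\tau_N=\inf\{t:\sum_iQ_i^N(t)>CN\}\wedge T$ and use optional stopping on the compensated sums) would make the expectation bounds rigorous. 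What your approach buys is transparency: the interchange step in the paper's sketch is not fully justified by boundedness alone, whereas your truncation isolates precisely where the $\ell_1$ initial condition and the JSQ routing rule enter.
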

\begin{proof}
The proof follows using the same line of arguments as in the proof of~\cite[Theorem 3.13]{Mitzenmacher1996}, and hence is sketched only briefly for the sake of completeness.
Fix any $T\geq 0$, and observe that 
\begin{align}
&\lim_{N\to\infty}\sup_{t\in[0,T]}\frac{1}{N}\sum_{i\geq 1}|M_{A,i}^N(t)|\label{subeq:4-8} \\
&=
\lim_{N\to\infty}\sup_{t\in[0,T]}\frac{1}{N}\left(\sum_{i\geq 1} \left|\mathcal{N}_{A,i}\left(\lambda(N)\int_0^t p_{i-1}^N(\QQ^N(s))\dif s\right)-\lambda(N)\int_0^t p_{i-1}^N(\QQ^N(s))\dif s\right| \right)\label{subeq:4-9}\\
&\leq \lim_{N\to\infty}\frac{1}{N}\sum_{i\geq 1} \mathcal{N}_{A,i}\left(\lambda(N)\int_0^T p_{i-1}^N(\QQ^N(s))\dif s\right)+\lambda T\label{subeq:4-10}.
\end{align} 
Since $N^{-1}\sum_{i\geq 1} \lambda(N)\int_0^t p_{i-1}^N(\QQ^N(s))\dif s\to \lambda t<\infty,$ the $\lim_{N\to\infty}$ and $\sum_{i\geq 1}$ above can be interchanged in~\eqref{subeq:4-10}, and hence in~\eqref{subeq:4-8}.
Now for each $i\geq 1$, from Doob's inequality~\cite[Theorem 1.9.1.3]{LS89}, we have for any $\epsilon>0,$
\begin{align*}
\Pro{\sup_{t\in[0,T]}\frac{1}{N}M_{A,i}^N(t)\geq \epsilon}&=\Pro{\sup_{t\in[0,T]}M_{A,i}^N(t)\geq N\epsilon}
\leq \frac{1}{N^2\epsilon^2}\E{\langle M_{A,i}^N\rangle (T)}\\
&\leq \frac{1}{N\epsilon^2}\int_0^T p_{i-1}(\mathbf{Q}^N(s-))\lambda(N)\dif s
\leq \frac{\lambda T}{N\epsilon^2}\to 0,\mbox{ as }N\to\infty.
\end{align*}
Thus $\sup_{t\in[0,T]}N^{-1}M_{A,i}^N(t)\pto 0$, and hence, 
$\sup_{t\in[0,T]}N^{-1}\sum_{i\geq 1}|M_{A,i}^N(t)|\pto 0.$
Using similar arguments as above, we can also show that $\sup_{t\in[0,T]}N^{-1}\sum_{i\geq 1}|M_{D,i}^N(t)|\pto 0,$ and the proof is complete.
\end{proof}

Now we prove the relative compactness of the sequence of fluid-scaled processes.
Recall that we denote all the fluid-scaled quantities by their respective small letters, e.g.~$\mathbf{q}^N(t):=\mathbf{Q}^N(t)/N$, componentwise, i.e., $q_i^N(t):=Q_i^N(t)/N$ for $i\geq 1$. Therefore the martingale representation in~\eqref{eq:mart-unscaled}, can be written as
\begin{equation}\label{eq:mart1}
\begin{split}
q_i^N(t)&=q_i^N(0)+\frac{\lambda(N)}{N}\int_0^t p_{i-1}^N(\mathbf{Q}^N(s))\dif s
-\int_0^t (q^N_i(s)-q^N_{i+1}(s))\dif s\\
&\hspace{4cm} +\frac{1}{N}(M_{A,i}^N(t)-M_{D,i}^N(t)),\quad i=1,2,\ldots, b,
\end{split}
\end{equation}
or equivalently,
\begin{equation}\label{eq:martingale rep assumption 2}
\begin{split}
q_i^N(t)&=q_i^N(0)+\frac{\lambda(N)}{N}\int_0^t \ind{\ZZ^N(s)\in\mathcal{R}_{i}}\dif s
-\int_0^t (q^N_i(s)-q^N_{i+1}(s))\dif s \\
&\hspace{4cm}+\frac{1}{N}(M_{A,i}^N(t)-M_{D,i}^N(t)),\quad i=1,2,\ldots, b.
\end{split}
\end{equation}
Now, we consider the Markov process $(\qq^N,\ZZ^N)(\cdot)$ defined on $\mathcal{S}\times G$. 
Define a random measure $\alpha^N$ on the measurable space $([0,\infty)\times G, \mathcal{C}\otimes\mathcal{G})$, when $[0,\infty)$ is endowed with Borel sigma algebra $\mathcal{C}$, by
\begin{equation}
\alpha^N(A_1\times A_2):=\int_{A_1} \ind{\ZZ^N(s)\in A_2}\dif s,
\end{equation}
for $A_1\in\mathcal{C}$ and $A_2\in\mathcal{G}$. 
Then the representation in \eqref{eq:martingale rep assumption 2} can be written in terms of the random measure as
\begin{equation}\label{eq:martingale rep assumption 2-2}
\begin{split}
q_i^N(t)&=q_i^N(0)+\lambda\alpha^N([0,t]\times\mathcal{R}_i)
-\int_0^t (q^N_i(s)-q^N_{i+1}(s))\dif s \\
&\hspace{4cm} +\frac{1}{N}(M_{A,i}^N(t)-M_{D,i}^N(t)),\quad i=1,2,\ldots, b.
\end{split}
\end{equation}
Let $\mathfrak{L}$ denote the space of all measures on $[0,\infty)\times G$ satisfying $\gamma([0,t],G)= t$, endowed with the topology corresponding to weak convergence of measures restricted to $[0,t]\times G$ for each $t$.
\begin{proposition}[Relative compactness]
\label{prop:rel compactness}
Assume $\mathbf{q}^N(0)\dto\qq^\infty\in \mathcal{S}$ as $N\to\infty$, then $\big\{(\mathbf{q}^N(\cdot),\alpha^N)\big\}_{N\geq 1}$ is a relatively compact sequence in $D_{\mathcal{S}}[0,\infty)\times\mathfrak{L}$ and the limit $(\mathbf{q}(\cdot),\alpha)$ of any convergent subsequence satisfies
\begin{equation}\label{eq:rel compact}
q_i(t)=q_i^\infty+\lambda \alpha([0,t]\times\mathcal{R}_i) -\int_0^t (q_i(s)-q_{i+1}(s))\dif s,\quad i=1,2,\ldots, b.
\end{equation}
\end{proposition}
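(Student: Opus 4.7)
The plan is to establish relative compactness of $\{\qq^N(\cdot)\}$ and $\{\alpha^N\}$ in their respective spaces separately, and then pass to the limit in the martingale representation~\eqref{eq:martingale rep assumption 2-2} along a convergent subsequence. Relative compactness of $\{\alpha^N\}$ in $\mathfrak{L}$ is almost immediate: the state space $G=\bZ_+^b$ is compact under the product of one-point compactifications, and each $\alpha^N$ restricted to $[0,t]\times G$ has total mass~$t$, so after normalization these are probability measures on a compact Polish space. Prokhorov's theorem together with a diagonal argument across $t=1,2,\ldots$ then yields subsequential limits in $\mathfrak{L}$.

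For $\{\qq^N(\cdot)\}$ I would first argue coordinate-wise tightness in $D_{[0,1]}[0,\infty)$ via the Aldous--Kurtz criterion. Compact containment is trivial because $q_i^N(t)\in[0,1]$, while the modulus-of-continuity condition follows from~\eqref{eq:mart1}: on an interval of length~$\varepsilon$, the two integral terms contribute at most $(\lambda(N)/N+1)\varepsilon$ in absolute value, and the martingale fluctuations vanish uniformly on compacts by Proposition~\ref{prop:mart zero1}. Upgrading coordinate-wise tightness to tightness in $D_\mathcal{S}[0,\infty)$ under the $\ell_1$ topology requires uniform tail control of $\sum_{i\geq m}q_i^N(t)$. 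Here I would sum~\eqref{eq:mart1} from $i=m$ to $\infty$: the telescoping departure integral reduces to a single term bounded by~$1$, the arrival contribution is bounded by $\lambda(N)t/N$, and the martingale pieces are again controlled by Proposition~\ref{prop:mart zero1}. Combined with the hypothesis $\qq^N(0)\pto\qq^\infty$ in the $\ell_1$ topology, this yields $\sup_{t\leq T}\sum_{i\geq m}q_i^N(t)\to 0$ in probability as first $N\to\infty$ and then $m\to\infty$, which promotes the coordinate-wise tightness to $\ell_1$ tightness.

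Having both pieces relatively compact, I would take a convergent subsequence, invoke Skorokhod's representation theorem to realize the convergences almost surely on a common probability space, and pass to the limit in~\eqref{eq:martingale rep assumption 2-2} term by term. The martingale contribution $N^{-1}(M_{A,i}^N-M_{D,i}^N)$ vanishes by Proposition~\ref{prop:mart zero1}; the departure integral converges to $\int_0^t(q_i(s)-q_{i+1}(s))\,\dif s$ by bounded convergence using the almost sure uniform convergence of $\qq^N$ on compacts; and the arrival term $(\lambda(N)/N)\alpha^N([0,t]\times\mathcal{R}_i)$ converges to $\lambda\alpha([0,t]\times\mathcal{R}_i)$ whenever $[0,t]\times\mathcal{R}_i$ is a continuity set of $\alpha$, which holds for all but countably many~$t$. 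Continuity of the other terms in~$t$ then extends the identity to every~$t\geq 0$, producing~\eqref{eq:rel compact}.

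The step I expect to be most delicate is the $\ell_1$ tightness upgrade, which is precisely the additional challenge flagged in the introduction as being absent in the infinite-server companion~\cite{MBLW16-4}. Product-topology tightness would suffice for all other steps, but the subsequent separation-of-time-scales identification of the limit measure $\alpha$ hinges on having $\qq^N$ converge in $\ell_1$, so the uniform tail bound derived from the initial-state hypothesis together with the telescoping structure of the drift is truly essential rather than cosmetic.
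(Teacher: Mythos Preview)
Your overall strategy mirrors the paper's, but the $\ell_1$ tail-control step has a real gap. When you sum~\eqref{eq:mart1} over $i\geq m$, the departure integrals telescope to $-\int_0^t q_m^N(s)\,\dif s$, which is nonpositive and hence contributes nothing as an upper bound; the arrival contribution $\sum_{i\geq m}\tfrac{\lambda(N)}{N}\int_0^t p_{i-1}^N(\QQ^N(s))\,\dif s$ you bound by roughly $\lambda t$ via $\sum_i p_{i-1}^N\leq 1$, but that bound is \emph{independent of $m$} and therefore cannot deliver $\sup_{t\leq T}\sum_{i\geq m}q_i^N(t)\to 0$ as $m\to\infty$. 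All the summed representation tells you is that the total mass grows at most linearly in time, which is useless for tails.

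The paper closes this gap with a JSQ-specific combinatorial observation (Lemma~\ref{lem:trivialbound}): since $\qq^\infty\in\ell_1$, there is a finite $M=M(T,\qq^\infty)$ with $\sum_{i<M}(1-q_i^\infty)>\lambda T$, and under the ordinary JSQ policy an arriving task can land at a server holding $M-1$ or more tasks only once \emph{every} server already holds at least $M-1$ tasks, which requires at least $\sum_{i<M}(N-Q_i^N(0))\approx N\sum_{i<M}(1-q_i^\infty)$ arrivals --- more than the $\lambda(N)T$ available on $[0,T]$, with probability tending to~$1$. Consequently $q_i^N(t)\leq q_i^N(0)$ for all $i\geq M$ and $t\leq T$ with high probability, so the tail of $\qq^N(t)$ is dominated by that of $\qq^N(0)$, which is tight in $\ell_1$ by hypothesis. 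This JSQ-specific input is precisely the ``additional challenge'' flagged in the introduction; your generic telescoping argument cannot substitute for it. The same lemma is reused in the paper to control $\sum_{i\geq 1}|\alpha^N([0,T]\times\mathcal{R}_i)-\alpha([0,T]\times\mathcal{R}_i)|$ when passing to the limit, by reducing to finitely many coordinates.
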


To prove Proposition~\ref{prop:rel compactness}, we will verify the  relative compactness conditions from~\cite{EK2009}. 
Let $(E,r)$ be a complete and separable metric space. For any $x\in D_E[0,\infty)$, $\delta>0$ and $T>0$, define
\begin{equation}\label{eq:mod-continuity}
w'(x,\delta,T)=\inf_{\{t_i\}}\max_i\sup_{s,t\in[t_{i-1},t_i)}r(x(s),x(t)),
\end{equation}
where $\{t_i\}$ ranges over all partitions of the form $0=t_0<t_1<\ldots<t_{n-1}<T\leq t_n$ with $\min_{1\leq i\leq n}(t_i-t_{i-1})>\delta$ and $n\geq 1$.
 Below we state the conditions for the sake of completeness.
\begin{theorem}\label{th:from EK}
\begin{normalfont}
\cite[Corollary~3.7.4]{EK2009}
\end{normalfont}
Let $(E,r)$ be complete and separable, and let $\big\{X_n\big\}_{n\geq 1}$ be a family of processes with sample paths in $D_E[0,\infty)$. Then $\big\{X_n\big\}_{n\geq 1}$ is relatively compact if and only if the following two conditions hold:
\begin{enumerate}[{\normalfont (a)}]
\item For every $\eta>0$ and rational $t\geq 0$, there exists a compact set $\Gamma_{\eta, t}\subset E$ such that $$\varliminf_{n\to\infty}\Pro{X_n(t)\in\Gamma_{\eta, t}}\geq 1-\eta.$$
\item For every $\eta>0$ and $T>0$, there exists $\delta>0$ such that
$$\varlimsup_{n\to\infty}\Pro{w'(X_n,\delta, T)\geq\eta}\leq\eta.$$
\end{enumerate}
\end{theorem}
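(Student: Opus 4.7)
The plan is to establish relative compactness of the two components separately and then identify subsequential limits via the martingale representation~\eqref{eq:martingale rep assumption 2-2}. For the measure component $\{\alpha^N\}$, compactness comes essentially for free: the state space $G=\bZ_+^b$ is compact in the product topology (being a finite product of one-point compactifications), the marginal $\alpha^N([0,t]\times G)=t$ is deterministic, so by Prohorov's theorem the restrictions to $[0,t]\times G$ form a relatively compact family, and a diagonal extraction over rational $t$ yields relative compactness of $\{\alpha^N\}$ in $\mathfrak{L}$.

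For the process component $\{\qq^N(\cdot)\}$ in $D_\mathcal{S}[0,\infty)$ with $\ell_1$ topology, I would verify the two conditions of Theorem~\ref{th:from EK}. For condition (a), note that each $\qq^N(t)$ lives in the bounded set $[0,1]^b$, and the monotonicity $q_{i+1}^N\leq q_i^N$ combined with a uniform bound on the expected total fluid-scaled occupancy $\E{\sum_{i\geq 1}q_i^N(t)}$ on compact time intervals (obtained, e.g., by stochastic-dominating the total number of tasks by an M/M/$\infty$ system with rate $\lambda(N)$) gives a uniform tail estimate $\sup_N\Pro{\sum_{i>k}q_i^N(t)>\eta}\to 0$ as $k\to\infty$, which is precisely the compactness criterion for $\ell_1$. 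For condition (b), a single arrival or departure changes exactly one coordinate of $\QQ^N$ by one, so the jump sizes in $\ell_1$ are bounded by $1/N$, and between jumps the drift contribution to $q_i^N$ is bounded uniformly in $N$; hence $w'(\qq^N,\delta,T)\pto 0$ as $N\to\infty$ for any fixed $\delta>0$.

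With joint relative compactness in $D_\mathcal{S}[0,\infty)\times\mathfrak{L}$ in hand, pass to a convergent subsequence $(\qq^{N_k},\alpha^{N_k})\to(\qq,\alpha)$ and, invoking Skorokhod's representation, assume the convergence is almost sure. Passing coordinatewise to the limit in~\eqref{eq:martingale rep assumption 2-2}: the initial value converges by hypothesis; the drift integral $\int_0^t(q_i^{N_k}(s)-q_{i+1}^{N_k}(s))\dif s$ converges by the continuous mapping theorem applied to the integration functional on $D_\R[0,\infty)$ using $\ell_1$-convergence; the martingale contribution $N_k^{-1}(M_{A,i}^{N_k}-M_{D,i}^{N_k})$ vanishes by Proposition~\ref{prop:mart zero1}; and $\lambda(N_k)/N_k\to\lambda$. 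Consequently, rearranging the equation, the remaining term $\alpha^{N_k}([0,t]\times\mathcal{R}_i)$ must converge to some limit that, by uniqueness in the limiting equation, equals $\alpha([0,t]\times\mathcal{R}_i)$, yielding the asserted integral equation.

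The main obstacle I anticipate is the $\ell_1$-tightness in condition (a), since the $\ell_1$ topology is strictly finer than the product topology on $\mathcal{S}$ and genuinely requires uniform control of the tail mass $\sum_{i>k}q_i^N(t)$. Without this tail bound one would only obtain coordinatewise convergence of the finite prefixes, which is insufficient to ensure that the limiting process actually lives in $\mathcal{S}$ and that the integration functionals are continuous. Verifying this requires a probabilistic upper bound on the total number of tasks in the system uniformly in $N$ and in $t\in[0,T]$, which I would obtain either by the M/M/$\infty$ domination mentioned above or via a direct moment computation using the martingale representation together with a Gronwall estimate on $\E{\sum_i q_i^N(t)}$.
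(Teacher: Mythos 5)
Your proposal does not address the statement it is supposed to prove. The statement in question is Theorem~\ref{th:from EK}, which is the general Ethier--Kurtz characterization of relative compactness for a family of processes with sample paths in $D_E[0,\infty)$ over an arbitrary complete separable metric space $(E,r)$: relative compactness is equivalent to the compact containment condition (a) together with the control (b) on the modulus $w'(x,\delta,T)$ defined in~\eqref{eq:mod-continuity}. This is a standalone result imported from \cite[Corollary~3.7.4]{EK2009}; the paper states it without proof precisely because it is a textbook theorem, and a proof of it would have to go through the general theory of the Skorohod $J_1$ topology (e.g.\ the Arzel\`a--Ascoli-type characterization of compact subsets of $D_E[0,T]$ in terms of $w'$, plus Prohorov's theorem to pass from compactness of sets of paths to tightness of the laws). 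Nothing in your write-up engages with that.

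What you have written instead is a sketch of the proof of Proposition~\ref{prop:rel compactness}: you verify conditions (a) and (b) of Theorem~\ref{th:from EK} for the specific processes $\qq^N(\cdot)$ and $\alpha^N$, and then identify the subsequential limit via the martingale representation~\eqref{eq:martingale rep assumption 2-2}. That is an application of the theorem, and it presupposes the very equivalence you were asked to establish. As a proof of Theorem~\ref{th:from EK} it is circular (it uses the theorem as a black box) and as a matter of scope it proves a different, more specific assertion. If the intended target really is Theorem~\ref{th:from EK}, the correct answer is either to reproduce the Ethier--Kurtz argument --- characterize the relatively compact subsets of $D_E[0,T]$ via uniform compact containment of the ranges and uniform smallness of $w'$, then invoke Prohorov's theorem in both directions --- or to note, as the paper does, that the result is quoted from the literature and requires no new proof here.
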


In order to prove the relative compactness, we will need the next three lemmas: Lemma~\ref{lem:technicalball} characterizes the relatively compact subsets of $\mathcal{S}$, Lemma~\ref{lem:tightcond} provides a necessary and sufficient criterion for a sequence of $\ell_1$-valued random variables to be tight, and Lemma~\ref{lem:trivialbound} is needed to ensure that at all finite times $t$, the occupancy state process lies in some compact set (possibly depending upon $t$).
\begin{lemma}[Compact subsets of $\mathcal{S}$]
\label{lem:technicalball}
Assume $b=\infty.$
A set $K\subseteq \mathcal{S}$ is relatively compact in $\mathcal{S}$ with respect to $\ell_1$ topology if and only if
\begin{equation}
\label{eq:compactcond}
\lim_{k\to\infty} \sup_{\xx\in K} \sum_{i=k}^\infty x_i = 0.
\end{equation}
\end{lemma}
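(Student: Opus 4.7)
The plan is to adapt the classical Kolmogorov--Riesz characterization of precompact subsets of $\ell_1$ to the closed subset $\mathcal{S}$. The first observation to record is that $\mathcal{S}$ is a closed subset of $\ell_1$: if $\xx^{(n)}\to\xx$ in $\ell_1$, then $\xx^{(n)}$ converges to $\xx$ componentwise as well, so both the monotonicity constraint $x_i\leq x_{i-1}$ and the bounds $x_i\in[0,1]$ pass to the limit. Since $\ell_1$ is complete, relative compactness in $\mathcal{S}$ coincides with total boundedness in $\ell_1$, so it suffices to characterize total boundedness by the uniform tail condition~\eqref{eq:compactcond}.

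For the necessity, I would fix $\varepsilon>0$ and use that $\overline{K}$ compact admits a finite $\varepsilon/2$-net $\{\xx^{(1)},\ldots,\xx^{(n)}\}\subset \ell_1$. For each $j$, since $\xx^{(j)}\in\ell_1$, pick $k_j$ so that $\sum_{i\geq k_j} x_i^{(j)}<\varepsilon/2$, and set $k^\star=\max_j k_j$. Then for any $\yy\in K$ there exists $j$ with $\|\yy-\xx^{(j)}\|_1<\varepsilon/2$, and a single application of the triangle inequality gives $\sum_{i\geq k^\star} y_i\leq \|\yy-\xx^{(j)}\|_1+\sum_{i\geq k^\star}x_i^{(j)}<\varepsilon$. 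Taking the supremum over $\yy\in K$ yields~\eqref{eq:compactcond}.

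For the sufficiency, I would show $K$ is totally bounded by a head-tail decomposition. Fix $\varepsilon>0$ and, using~\eqref{eq:compactcond}, pick $k$ such that $\sup_{\xx\in K}\sum_{i\geq k}x_i<\varepsilon/2$. The projection $\xx\mapsto(x_1,\ldots,x_{k-1})$ maps $K$ into the compact cube $[0,1]^{k-1}$, which admits a finite $\varepsilon/2$-net (in the $\mathbbm{R}^{k-1}$ $\ell_1$-norm) $\{\yy^{(1)},\ldots,\yy^{(m)}\}$; extend each $\yy^{(j)}$ to a sequence in $\ell_1$ by padding with zeros. For any $\xx\in K$, choose $j$ with $\sum_{i=1}^{k-1}|x_i-y_i^{(j)}|<\varepsilon/2$ and estimate
\begin{equation*}
\|\xx-\yy^{(j)}\|_1 = \sum_{i=1}^{k-1}|x_i-y_i^{(j)}|+\sum_{i\geq k}x_i < \varepsilon,
\end{equation*}
so $\{\yy^{(1)},\ldots,\yy^{(m)}\}$ is a finite $\varepsilon$-net for $K$ in $\ell_1$. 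Total boundedness in the complete metric space $\ell_1$ is then equivalent to relative compactness, and since $\mathcal{S}$ is closed the limit points lie in $\mathcal{S}$, completing the proof.

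I do not anticipate a serious obstacle: the argument is a routine adaptation of Kolmogorov--Riesz to a closed subset of $\ell_1$, with the only mild care being to note that the padded approximants $\yy^{(j)}$ do not themselves need to lie in $\mathcal{S}$ (closedness of $\mathcal{S}$ in $\ell_1$ is what allows us to pass from $\ell_1$-precompactness to precompactness inside $\mathcal{S}$).
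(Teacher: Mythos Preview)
Your proof is correct and complete. Both your argument and the paper's establish the same closedness of $\mathcal{S}$ in $\ell_1$, but the two proofs then proceed along different (though equally standard) lines. For sufficiency, the paper extracts a Cauchy subsequence via a diagonal argument on the first $k-1$ coordinates, whereas you build an explicit finite $\varepsilon$-net by combining a net for the head in $[0,1]^{k-1}$ with zero tails; your route is a touch more economical and sidesteps the subsequence bookkeeping. For necessity, the paper argues by contradiction, producing a sequence $\xx^{(k)}\in K$ with $\sum_{i\geq k}x_i^{(k)}\geq\varepsilon/2$ and showing any $\ell_1$-limit point would have a non-vanishing tail; you instead use a finite $\varepsilon/2$-net of $\overline{K}$ directly together with the triangle inequality, which is again slightly shorter. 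Both approaches are routine adaptations of the Kolmogorov--Riesz/Fr\'echet characterization of precompact sets in $\ell_1$, and neither offers a substantive advantage beyond taste.
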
 
\begin{proof}
For the if part, fix any $K\subseteq \mathcal{S}$ satisfying~\eqref{eq:compactcond}.
We will show that 
%
 any sequence $\big\{\xx^n\big\}_{n\geq 1}$ in $K$ 
has a Cauchy subsequence.
Since the $\ell_1$ space is complete, this will then imply that $\big\{\xx^n\big\}_{n\geq 1}$ has a convergent subsequence with the limit in $\overline{K}$,
which will complete the proof.

To show the existence of a Cauchy sequence, fix any $\varepsilon>0$, and choose $k\geq 1$ (depending on $\varepsilon$) such that
\begin{equation}\label{eq:uppertailtech}
\sum_{i\geq k}|x_i^n|<\frac{\varepsilon}{4}\qquad\forall\ n\geq 1.
\end{equation}
Now observe that the set of first coordinates $\big\{x_1^n\big\}_{n\geq 1}$ is a sequence in $[0,1]$, and hence has a convergent subsequence. Along that subsequence, the set of the second coordinates has a further convergent subsequence. Proceeding this way, we can get a subsequence along which the first $k-1$ coordinates converge. Therefore, depending upon $\varepsilon$, an $N'\in\N$ can be chosen, such that 
\begin{equation}\label{eq:lowertailtech}
\sum_{i<k}|x_i^n-x_i^m|<\frac{\varepsilon}{2}\qquad\forall\ m,n \geq N'.
\end{equation}
Therefore,~\eqref{eq:uppertailtech} and~\eqref{eq:lowertailtech} yields for all $n\geq \max\big\{N,N'\big\}$,
\begin{align*}
\norm{\xx^n-\xx^m}
&=\sum_{i\geq 1}|x_i^n-x_i^m|
\leq \sum_{i< k}|x_i^n-x_i^m| +\sum_{i\geq k}|x_i^n-x_i^m|\\
&\leq \sum_{i< k}|x_i^n-x_i^m| +\sum_{i\geq k}x_i^n+\sum_{i\geq k}x_i^m
<\varepsilon
\end{align*}
along the above suitably constructed subsequence. 
Now that the limit point is in $\mathcal{S}$ follows from the completeness of $\ell_1$ space and the fact that $\mathcal{S}$ is a closed subset of $\ell_1$.
Indeed, since the $\ell_1$ topology is finer than the product topology, any set that is closed with respect to the product topology is closed with respect to the $\ell_1$ topology, and observe that $\mathcal{S}$ is closed with respect to the product topology.

For the only if part, let $K\subseteq \mathcal{S}$ be relatively compact, and on the contrary, assume that there exists an $\varepsilon>0,$ such that
\begin{equation}
\lim_{k\to\infty} \sup_{\xx\in K} \sum_{i=k}^\infty x_i \geq \varepsilon.
\end{equation}
Therefore, for each $k\geq 1$, there exists $\xx^{(k)}\in K$, such that $\sum_{i=k}^\infty x^{(k)}_i \geq \varepsilon/2$.
Consider any limit point $\xx^*$ of the sequence $\big\{\xx^{(k)}\big\}_{k\geq 1}$, and note that $\sum_{i=j}^\infty x^*_i \geq \varepsilon/2$ for all $j\geq 1.$ This contradicts that $\xx^*\in\ell_1$, and the proof is complete.
\end{proof}

\begin{lemma}[Criterion for $\ell_1$-tightness]
\label{lem:tightcond}
Let $\big\{\XX^N\big\}_{N\geq 1}$ be a sequence of random variables in $\mathcal{S}$. Then the following are equivalent: 
\begin{enumerate}[{\normalfont (i)}]
\item $\big\{\XX^N\big\}_{N\geq 1}$ is tight with respect to product topology, and
for all $\varepsilon>0,$
\begin{equation}\label{eq:smalltail}
\lim_{k\to\infty}\varlimsup_{N\to\infty}\mathbbm{P}\Big(\sum_{i\geq k}X_i^N>\varepsilon\Big) = 0.
\end{equation}
\item $\big\{\XX^N\big\}_{N\geq 1}$ is tight with respect to $\ell_1$ topology.
\end{enumerate}
\end{lemma}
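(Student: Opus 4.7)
The plan is to derive both implications from Lemma~\ref{lem:technicalball}, which characterizes relatively compact subsets of $\mathcal{S}$ through uniform tail decay.

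For the direction (ii)$\Rightarrow$(i), the $\ell_1$ topology is finer than the product topology (since $\ell_1$ convergence implies coordinatewise convergence), so any $\ell_1$-compact set is also product-compact inside $\mathcal{S}$; this immediately yields tightness in the product topology. To verify~\eqref{eq:smalltail}, fix $\varepsilon>0$ and pick an $\ell_1$-compact $K_\varepsilon$ with $\Pro{\XX^N\in K_\varepsilon}\geq 1-\varepsilon$ for every $N$. By Lemma~\ref{lem:technicalball}, $\sup_{\xx\in K_\varepsilon}\sum_{i\geq k}x_i\to 0$ as $k\to\infty$; hence for any $\delta>0$ and all sufficiently large $k$, $\{\sum_{i\geq k}X_i^N>\delta\}\subseteq\{\XX^N\notin K_\varepsilon\}$, so the probability is at most $\varepsilon$, uniformly in $N$. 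Letting $\varepsilon\downarrow 0$ produces~\eqref{eq:smalltail}.

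For (i)$\Rightarrow$(ii), the idea is to build, for each prescribed $\varepsilon>0$, an $\ell_1$-compact set that contains $\XX^N$ with probability at least $1-\varepsilon$. First, for each $m\geq 1$, use~\eqref{eq:smalltail} to choose a cut-off $k_m$ large enough that
\begin{equation*}
\varlimsup_{N\to\infty}\Pro{\sum_{i\geq k_m}X_i^N>\tfrac{1}{m}}\leq \frac{\varepsilon}{2^{m+1}}.
\end{equation*}
The uniformity in $N$ is the subtle point: the $\varlimsup$ bound gives an $N_m$ past which the one-$N$ probability is at most $\varepsilon/2^{m}$, and for the finitely many $N<N_m$ one further enlarges $k_m$ using the fact that $\XX^N\in\ell_1$ almost surely, so that $\Pro{\sum_{i\geq k_m}X_i^N>1/m}\leq \varepsilon/2^{m}$ holds for \emph{every} $N$. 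Then set
\begin{equation*}
A:=\Big\{\xx\in\mathcal{S}:\sum_{i\geq k_m}x_i\leq \tfrac{1}{m}\text{ for all }m\geq 1\Big\}.
\end{equation*}
A union bound gives $\Pro{\XX^N\notin A}\leq \sum_{m\geq 1}\varepsilon/2^m=\varepsilon$ uniformly in $N$, while by construction $\sup_{\xx\in A}\sum_{i\geq k}x_i\to 0$ as $k\to\infty$, so Lemma~\ref{lem:technicalball} guarantees that $\overline{A}$ (closure taken in the $\ell_1$ topology) is compact in $\mathcal{S}$. Since $\Pro{\XX^N\in\overline{A}}\geq 1-\varepsilon$ for every $N$, tightness in $\ell_1$ follows.

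The main obstacle is the step transferring the $\varlimsup$ bound to a bound that is uniform in $N$; this is where the assumption that each $\XX^N$ almost surely takes values in $\ell_1$ (together with the countable union-bound argument choosing one cut-off per scale $1/m$) is essential. Once this uniformization is achieved, the rest is a direct application of the compactness criterion in Lemma~\ref{lem:technicalball}.
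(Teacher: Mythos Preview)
Your proof is correct and, for the direction (i)$\Rightarrow$(ii), essentially identical to the paper's argument: both build an $\ell_1$-relatively compact set of the form $\{\xx\in\mathcal{S}:\sum_{i\geq k_m}x_i\leq \delta_m\text{ for all }m\}$ by iteratively choosing cut-offs via~\eqref{eq:smalltail}, handling the finitely many small-$N$ exceptions separately using $\XX^N\in\ell_1$ a.s., and then applying Lemma~\ref{lem:technicalball} together with a union bound. The only cosmetic difference is your choice of thresholds $1/m$ versus the paper's $\varepsilon/2^n$.

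For (ii)$\Rightarrow$(i) you take a somewhat different and arguably cleaner route. The paper argues by contradiction: it assumes~\eqref{eq:smalltail} fails, passes to an $\ell_1$-convergent subsequence, and shows the limit would violate $\ell_1$-summability. You instead argue directly: given $\ell_1$-tightness, pick a compact $K_\varepsilon$ with mass $\geq 1-\varepsilon$, invoke the uniform tail decay on $K_\varepsilon$ furnished by Lemma~\ref{lem:technicalball}, and conclude that the tail probability is at most $\varepsilon$ uniformly in~$N$ for large~$k$. Your approach avoids the subsequence extraction and the somewhat delicate transfer of the tail bound to the limiting variable, at the cost of invoking Lemma~\ref{lem:technicalball} once more; either argument is perfectly adequate here.
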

\begin{proof}
To prove (i)$\implies$(ii),
for any $\varepsilon>0$, we will construct a relatively compact set compact set $K(\varepsilon)$ such that
$$\Pro{\XX^N\notin \overline{K(\varepsilon)}}<\varepsilon\quad\mbox{for all } N.$$
Observe from~\eqref{eq:smalltail} that for all $\varepsilon>0$, there exists 
an $r(\varepsilon)\geq 1$, such that
$$\varlimsup_{N\to\infty}\mathbbm{P}\Big(\sum_{i\geq r(\varepsilon)}X_i^N>\varepsilon\Big) <\varepsilon,$$
and with it an $N(\varepsilon)\geq 1$, such that 
$$\mathbbm{P}\Big(\sum_{i\geq k(\varepsilon)}X_i^N>\varepsilon\Big)<\varepsilon\quad\mbox{for all } N> N(\varepsilon).$$
Furthermore, since $\big\{\XX^1, \XX^2,\ldots, \XX^{N(\varepsilon)}\big\}$ is a finite set of $\ell_1$-valued random variables, there exists 
$k(\varepsilon)\geq r(\varepsilon)$, such that 
$$\mathbbm{P}\Big(\sum_{i\geq k(\varepsilon)}X_i^N>\varepsilon\Big)<\varepsilon\quad\mbox{for all } N.$$
Thus, there exists an increasing sequence $\big\{k(n)\big\}_{n\geq 1}$
such that 
$$\mathbbm{P}\Big(\sum_{i\geq k(n)}X_i^N>\frac{\varepsilon}{2^n}\Big)<\frac{\varepsilon}{2^n}\quad\mbox{for all } N.$$
Define the set $K(\varepsilon)$ as
$$K(\varepsilon):= \Big\{\xx\in \mathcal{S}:\sum_{i\geq k(n)}x_i\leq \frac{\varepsilon}{2^n}\quad\mbox{for all}\quad n\geq 1\Big\}.$$
Due to Lemma~\ref{lem:technicalball}, we know $K(\varepsilon)$ is relatively compact in $\ell_1$.
Also,
\begin{align*}
\mathbbm{P}\big(\XX^N\notin \overline{K(\varepsilon)}\big)&= \mathbbm{P}\Big(\bigcup_{n\geq 1}\Big\{\sum_{i\geq k(n)}X_i^N>\frac{\varepsilon}{2^n}\Big\}\Big)
\leq \sum_{n\geq 1}\mathbbm{P}\Big(\sum_{i\geq k(n)}X_i^N>\frac{\varepsilon}{2^n}\Big)<\varepsilon.
\end{align*}
To prove (ii)$\implies$(i), first observe that a sequence of random variables is tight with respect to the $\ell_1$ topology implies that it must be tight with respect to the product topology. 
Now assume on the contrary to~\eqref{eq:smalltail}, that there exists $\varepsilon>0$, such that 
\begin{equation}\label{eq:heavytail}
\lim_{k\to\infty}\varlimsup_{N\to\infty}\mathbbm{P}\Big(\sum_{i\geq k}X_i^N>\varepsilon\Big)>\varepsilon.
\end{equation}
Since $\big\{\XX^N\big\}_{N\geq 1}$ is tight with respect to $\ell_1$ topology, take any convergent subsequence $\big\{\XX^{N(n)}\big\}_{n\geq 1}$ with $\XX^*$ being a random variable following the limiting measure.
In that case, observe that~\eqref{eq:heavytail} implies $\mathbbm{P}\big(\sum_{i\geq k}X_i^*>\varepsilon/2\big)>\varepsilon$ for all $k\geq 1$, which leads to a contradiction since $\XX^*$ is an $\ell_1$-valued random variable.
\end{proof}

\begin{lemma}
\label{lem:trivialbound}
For any $\qq\in \mathcal{S}$, assume that $\qq^N(0)\dto \qq^\infty$, as $N\to\infty$.
Then for any $t\geq 0$, there exists $M(t, \qq^\infty)\geq 1$, such that under the JSQ policy, with probability tending to one as $N\to\infty$, no arriving task is assigned to a server with $M(t,\qq^\infty)-1$ active tasks up to time $t$.
\end{lemma}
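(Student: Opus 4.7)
The plan is to exhibit $M=M(t,\qq^\infty)$ by first bounding the aggregate queue mass $\sum_{i\geq 1} Q_i^N(s)$ uniformly in $s\in[0,t]$ with high probability, and then exploiting a structural property of the JSQ policy: an arrival can be dispatched to a server holding $M-1$ active tasks only when the minimum queue length at that epoch equals $M-1$, which in turn forces $\sum_{i\geq 1} Q_i^N(s-)\geq N(M-1)$. Choosing $M$ so that this lower bound is violated w.h.p.\ therefore precludes such an assignment.

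Two standard ingredients feed the mass bound. First, the hypothesis $\qq^N(0)\dto\qq^\infty$ in the $\ell_1$ topology entails $\sum_{i\geq 1} q_i^N(0)\dto\sum_{i\geq 1} q_i^\infty<\infty$, so the initial aggregate fluid mass is tight. Second, since $\lambda(N)/N\to\lambda$, the cumulative arrival count $A^N(t)$, a Poisson random variable with mean $\lambda(N)t$, satisfies $A^N(t)/N\pto\lambda t$. Because the total task count is monotone nondecreasing in arrivals and nonincreasing in departures, one has the sample-path inequality
\[
\sum_{i\geq 1} Q_i^N(s)\ \leq\ \sum_{i\geq 1} Q_i^N(0)+A^N(t),\qquad 0\leq s\leq t,
\]
which combined with the two tightness facts yields a constant $C=C(t,\qq^\infty)$ for which the event
\[
\mathcal{E}_N\ :=\ \Bigl\{\sup_{s\in[0,t]}\tfrac{1}{N}\sum_{i\geq 1} Q_i^N(s)\leq C\Bigr\}
\]
has probability at least $1-\varepsilon$ for every preselected $\varepsilon>0$ and all sufficiently large $N$.

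Setting $M:=\lceil C\rceil+2$ then completes the argument. On $\mathcal{E}_N$, at every arrival epoch $s\in[0,t]$ the minimum queue length is strictly less than $M-1$, since otherwise every server would carry at least $M-1$ tasks, giving $\sum_{i\geq 1} Q_i^N(s-)\geq N(M-1)>NC$, contradicting the definition of $\mathcal{E}_N$. Under the JSQ rule the arriving task joins a server of minimum queue length, so on $\mathcal{E}_N$ no arrival is placed on a server with $M-1$ active tasks during $[0,t]$, and the conclusion of the lemma follows. The only subtlety is that the mass bound needs to hold uniformly in $s\in[0,t]$; this is delivered by the above monotonicity-in-arrivals argument and bypasses any need for a maximal inequality on a compensated process.
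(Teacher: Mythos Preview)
Your argument is correct and follows essentially the same route as the paper's own proof: both hinge on the observation that, under the ordinary JSQ policy, an arriving task can only be assigned to a server holding $M-1$ tasks when \emph{every} server has at least $M-1$ tasks, and then compare the arrivals needed to reach such a state against the Poisson count $A^N(t)$. The only difference is cosmetic: the paper counts the ``holes'' $\sum_{i=1}^{M-1}(N-Q_i^N(0))$ that must be filled and thereby obtains the explicit choice $M=\min\{k:\sum_{i=1}^{k-1}(1-q_i^\infty)>\lambda t\}$, whereas you bound the total mass $\sum_{i\geq 1}Q_i^N(s)$ by $\sum_{i\geq 1}Q_i^N(0)+A^N(t)$ and set $M=\lceil C\rceil+2$. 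Your $M$ is in general larger, but that is immaterial since the lemma only requires existence of a finite $M$.
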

\begin{proof}
Let $A^N(t)$ be the cumulative number of tasks arriving up to time $t$.
Since the arrival rate is $\lambda(N)$, and $\lambda(N)/N\to\lambda$, as $N\to\infty,$ for any $\varepsilon>0$, 
$$\Pro{A^N(t)\geq (\lambda t + \varepsilon)N}\to 0\qquad\mbox{as}\qquad N\to\infty.$$
Define $M(t,\qq^\infty):=\min\big\{k\geq 1: \sum_{i=1}^{k-1}(1-q_i^\infty)>\lambda t\big\},$
and choose 
$$\varepsilon = \sum_{i=1}^{M(t,\qq^\infty)-1}(1-q_i^\infty)-\lambda t >0.$$
Note that since $\qq^\infty\in \mathcal{S}\subset \ell_1$, $M(t,\qq^\infty)$ exists and is finite for all $t\geq 0.$
We now claim that the probability that in the interval $[0,t]$ a task is assigned to some server with $M(t,\qq^\infty)$ active tasks tends to 0, as $N\to\infty.$
Indeed, in order for a task to be assigned to some server with $M(t,\qq^\infty)-1$ active tasks, all the servers must have at least $M(t,\qq^\infty)-1$ active tasks. 
Now, the minimum number of tasks required for this, is given by 
$\sum_{i=1}^{M(t,\qq^\infty)-1}(N-Q_i^N(0))$.
Therefore, the proof is complete by observing that 
\begin{align*}
\mathbbm{P}\Big(A^N(t)\geq \sum_{i=1}^{M(t,\qq^\infty)-1}(N-Q_i^N(0))\Big) = \Pro{A^N(t)\geq \Big(\lambda t +\frac{\varepsilon}{2}\Big) N}\to 0, \quad \mbox{as}\quad N\to\infty.
\end{align*}
\end{proof}

\begin{proof}[Proof of Proposition~\ref{prop:rel compactness}]
The proof goes in two steps. We first prove the relative compactness, and then show that the limit satisfies~\eqref{eq:rel compact}.

Observe from \cite[Proposition 3.2.4]{EK2009} that, to prove the relative compactness of the sequence of processes $\big\{(\mathbf{q}^N(\cdot),\alpha^N)\big\}_{N\geq 1}$, it is enough to prove relative compactness of the individual components.
Note that, from Prohorov's theorem~\cite[Theorem 3.2.2]{EK2009}, $\mathfrak{L}$ is compact, since $G$ is compact. Now, relative compactness of $\big\{\alpha^N\big\}_{N\geq 1}$ follows from the compactness of $\mathfrak{L}$ under the topology of weak convergence of measures and Prohorov's theorem.
To claim the relative compactness of $\big\{\mathbf{q}^N(\cdot)\big\}_{N\geq 1}$, we will verify the conditions of Theorem~\ref{th:from EK}. 

Observe that Theorem~\ref{th:from EK} (a) requires to show tightness of the sequence $\big\{\mathbf{q}^N(t)\big\}_{N\geq 1}$ for each fixed (rational) $t\geq 0$.
Fix any $t\geq 0.$
Due to Lemma~\ref{lem:trivialbound}, we know  
$$\lim_{N\to\infty}\Pro{q_i^N(t)\leq q_i^N(0),\quad \forall\ i\geq M(t,\qq^\infty)}= 1.$$
Also, $\qq^N(0)\dto \qq^\infty$ with respect to $\ell_1$ topology. In particular, $\big\{\qq^N(0)\big\}_{N\geq 1}$ is tight in $\ell_1$.
Therefore, using (ii)$\implies$(i) in Lemma~\ref{lem:tightcond} we obtain, for any $\varepsilon>0$,
\begin{align*}
\lim_{k\to\infty}\varlimsup_{N\to\infty} \mathbbm{P}\Big(\sum_{i\geq k}q_i^N(t)>\varepsilon\Big)
&\leq \lim_{k\to\infty}\varlimsup_{N\to\infty} \mathbbm{P}\Big(\sum_{i\geq k}q_i^N(0)>\varepsilon\Big)=0.
\end{align*}
Also, since $\qq^N(t)\in \mathcal{S}\subseteq [0,1]^b$, which is compact with respect to the product topology,  $\big\{\qq^N(t)\big\}_{N\geq 1}$ is tight with respect to the product topology.
Hence using (i)$\implies$(ii) in Lemma~\ref{lem:tightcond} we conclude that the sequence $\big\{\mathbf{q}^N(t)\big\}_{N\geq 1}$ is tight in $\ell_1.$
For condition (b),  first note that 
for all $i = 1,\ldots, b$.
\begin{align*}
&|q_i^N(t_1)-q_i^N(t_2)|
\leq \lambda \alpha^N([t_1,t_2]\times\mathcal{R}_i)+\int_{t_1}^{t_2} (q^N_i(s)-q^N_{i+1}(s))\dif s \\
&\hspace{4cm}+\frac{1}{N}\big|M_{A,i}^N(t_1)-M_{D,i}^N(t_1)-M_{A,i}^N(t_2)+M_{D,i}^N(t_2)\big| + o(1).
\end{align*}
Thus, 
\begin{equation}\label{mart-norm-ub}
\begin{split}
&\norm{\qq^N(t_1)-\qq^N(t_2)}\\
&\leq \lambda \sum_{i=1}^b\alpha^N([t_1,t_2]\times\mathcal{R}_i)+\int_{t_1}^{t_2} \sum_{i=1}^b (q^N_i(s)-q^N_{i+1}(s))\dif s \\
&\hspace{3cm}+\frac{1}{N}\sum_{i=1}^b\big|M_{A,i}^N(t_1)-M_{D,i}^N(t_1)-M_{A,i}^N(t_2)+M_{D,i}^N(t_2)\big|+o(1)\\
&\leq \lambda (t_1-t_2)+\int_{t_1}^{t_2} q^N_1(s)\dif s +\frac{1}{N}\sum_{i=1}^b\big|M_{A,i}^N(t_1)-M_{D,i}^N(t_1)-M_{A,i}^N(t_2)+M_{D,i}^N(t_2)\big|+o(1)\\
&\leq (\lambda+1) (t_1-t_2)+\frac{1}{N}\sum_{i=1}^b\big|M_{A,i}^N(t_1)-M_{D,i}^N(t_1)-M_{A,i}^N(t_2)+M_{D,i}^N(t_2)\big|+o(1).
\end{split}
\end{equation}
Now, from the $\ell_1$ convergence of scaled martingales in Proposition~\ref{prop:mart zero1}, we get, for any $T\geq 0$,
$$\sup_{t\in[0,T]}\frac{1}{N}\sum_{i=1}^b|M_{A,i}^N(t_1)-M_{D,i}^N(t_1)-M_{A,i}^N(t_2)+M_{D,i}^N(t_2)|\pto 0.$$
Observe that the proof of the relative compactness of $\big\{\qq^N(t)\big\}_{t\geq 0}$ is complete if we show that for any $\eta>0$, there exists a $\delta >0$ and a finite partition $(t_j)_{i=1}^n$ of $[0,T]$ with $\min_j|t_{j}-t_{j-1}|>\delta$ such that 
\begin{equation}
\varlimsup_{N\to\infty}\mathbbm{P}\Big(\max_j \sup_{s,t\in [t_{j-1},t_j)}\norm{\qq^N(s)-\qq(t)} \geq \eta \Big) < \eta.
\end{equation}
Now, \eqref{mart-norm-ub} implies that, for any finite partition $(t_j)_{j= 1}^n$ of $[0,T]$,
\begin{align*}
\max_j \sup_{s,t\in [t_{j-1},t_j)} \norm{\qq^N(s)-\qq^N(t)} &\leq (\lambda+1) \max_j (t_{j}-t_{j-1})+\zeta_N,
\end{align*}
where $\Pro{\zeta_N>\eta/2}<\eta$ for all sufficiently large $N$. Now take $\delta = \eta/(4(\lambda+1))$ and any partition with $\max_j(t_j-t_{j-1})< \eta/(2(\lambda+1))$ and $\min_j(t_j-t_{j-1})>\delta$. 
Now on the event $\big\{\zeta_N\leq \eta/2\big\}$,  
$$\max_i \sup_{s,t\in [t_{i-1},t_i)}\norm{\qq^N(s)-\qq^N(t)} \leq \eta.$$
Therefore, for all sufficiently large $N$,
\begin{align*}
&\mathbbm{P}\Big(\max_j \sup_{s,t\in [t_{j-1},t_j)}\norm{\qq^N(s)-\qq^N(t)} \geq \eta \Big)
\leq \Pro{\zeta_N>\eta/2}\leq \eta.
\end{align*}

To prove that the limit $(\qq(\cdot),\alpha)$ of any convergent subsequence satisfies~\eqref{eq:rel compact}, we will use the continuous-mapping theorem~\cite[Theorem~3.4.1]{W02}.
Specifically, we will show that the right side of~\eqref{eq:martingale rep assumption 2-2} is a continuous map of suitable arguments.
Let $\big\{\qq(t)\big\}_{t\geq 0}$ and $\big\{\yy(t)\big\}_{t\geq 0}$ be an $\mathcal{S}$-valued and an $\ell_1$-valued c\'adl\'ag function, respectively. 
Also, let $\alpha$ be a measure on the measurable space $([0,\infty)\times G, \mathcal{C}\otimes\mathcal{G})$. Then for $\qq^0\in \mathcal{S}$, define for $i\geq 1$,
$$F_i(\qq,\alpha,\qq^0,\yy)(t):=q_i^0+y_i(t)+\lambda \alpha([0,t]\times\mathcal{R}_i)-\int_0^t(q_i(s) - q_{i+1}(s))\dif s.$$
Observe that it is enough to show $\FF=(F_1,\ldots,F_b)$ is a continuous
operator. 
Indeed, in that case the right side of~\eqref{eq:martingale rep assumption 2-2} can be written as $\FF(\qq^N,\alpha^N,\qq^N(0),\yy^N)$, where $\yy^N=(y_1^N,\ldots,y_b^N)$ with $y_i^N= (M_{A,i}^N-M_{D,i}^N)/N$, and since each argument converges, we will get the convergence to the right side of~\eqref{eq:rel compact}.
Therefore, we now prove the continuity of $\FF$ below. 
In particular assume that (a)~the sequence of processes $\big\{(\qq^N,\yy^N)\big\}_{N\geq 1}$ converges to $(\qq,\yy)$ with respect to $\ell_1$ topology, (b)~for any fixed $t\geq 0$, the sequence $\big\{\big(\alpha^N([0,t],\mathcal{R}_i)\big)_{i\geq 1}\big\}_{N\geq 1}$ in $\ell_1$ converges to $\big(\alpha([0,t],\mathcal{R}_i)\big)_{i\geq 1}$, and (c)~the sequence of $\mathcal{S}$-valued random  variables $\qq^N(0)$
 converges to $\qq(0)$ with respect to $\ell_1$ topology.
 
 Fix any $T\geq 0$ and $\varepsilon>0$.
 \begin{enumerate}[{\normalfont (i)}]
 \item  Due to (a) above, choose $N_1\in\N$, such that $\sup_{t\in[0,T]}\norm{\qq^N(t)-\qq(t)}<\varepsilon/(4T)$. In that case, observe that
 \begin{align*}
 \sup_{t\in [0,T]}\int_0^t|q_1^N(t) - q_1(t)|\dif s& \leq T\sup_{t\in [0,T]}\norm{\qq^N(t))-\qq(t))} <\frac{\varepsilon}{4}.
 \end{align*}
 \item Again, due to (a), choose $N_2\in\N$, such that $\sup_{t\in[0,T]}\norm{\yy^N(t)-\yy(t)}<\varepsilon/4$.
 \item   We now claim that for the $\epsilon > 0$ given above there is an $N_3 \in {\mathbb N}$
such that
 \begin{equation}\label{eq:measureconv}
 \lambda\sum_{i\geq 1}  \left|\alpha^N([0,T]\times\mathcal{R}_i)-\alpha([0,T]\times\mathcal{R}_i)\right|<\frac{\varepsilon}{4}.
 \end{equation}
 Observe that we only know the weak convergence of the sequence of measures~$\alpha^N$, and therefore we cannot directly make assumption (b) above.
We are therefore about to  show that assumption (b) is valid in our case and that it follows from weak convergence.
Indeed, since $\qq^\infty\in \mathcal{S}\subseteq \ell_1$, there exists $\hat{M}(\qq^\infty)$, such that 
$q_{\hat{M}(\qq^\infty)}^\infty<1$, and consequently $q_i^\infty<1$ for all $i\geq \hat{M}(\qq^\infty)$.
Also, due to Lemma~\ref{lem:trivialbound}, 
$$\lim_{N\to\infty}\mathbbm{P}\Big(\sup_{t\in[0,T]}q_i^N(t)\leq q_i^N(0)\quad\mbox{for all}\quad i\geq M(T,\qq^\infty)\Big)= 1.$$
Thus, if $N_0 := \max\big\{\hat{M}(\qq^\infty), M(T,\qq^\infty)\big\}$, then 
$$\lim_{N\to\infty}\mathbbm{P}\Big(\sup_{t\in[0,T]}q_i^N(t)<1\quad\mbox{for all}\quad i\geq N_0\Big)= 1.$$
This implies
$$\sum_{i\geq N_0}\alpha^N([0,T]\times\mathcal{R}_i)\pto \sum_{i\geq N_0}\alpha([0,T]\times\mathcal{R}_i)=0.$$
Also, due to weak convergence of $\alpha^N$, 
$$\sum_{i< N_0}\alpha^N([0,T]\times\mathcal{R}_i)\pto \sum_{i< N_0}\alpha([0,T]\times\mathcal{R}_i).$$ 
 \item  Finally, due to (c), choose $N_4\in\N$, such that $\norm{\qq^N(0)-\qq(0)}<\varepsilon/4$.
 \end{enumerate}
Let $\hat{N}=\max\big\{N_1,N_2,N_3,N_4\big\}$, then for $N\geq \hat{N}$,
\begin{align*}
&\sup_{t\in [0,T]}\norm{\FF(\qq^N,\alpha^N,\qq^N(0),\yy^N)-\FF(\qq,\alpha,\qq(0),\yy)}(t)<\varepsilon.
\end{align*}
Thus the proof of continuity of $\FF$ is complete.
\end{proof}

To characterize the limit in~\eqref{eq:rel compact}, for any $\qq\in \mathcal{S}$, define the Markov process  $\ZZ_{\qq}$ on $G$ as
\begin{equation}\label{eq:slowprocess}
\ZZ_{\qq} \rightarrow 
\begin{cases}
\ZZ_{\qq}+e_i& \quad\mbox{ at rate }\quad q_i-q_{i+1},\\
\ZZ_{\qq}-e_i& \quad\mbox{ at rate }\quad \lambda \ind{\ZZ_\qq\in\mathcal{R}_{i}},
\end{cases}
\end{equation}
where $e_i$ is the $i^{\mathrm{th}}$ unit vector, $i=1,\ldots,b$.

\begin{proof}[{Proof of Theorem~\ref{th:genfluid}}]
Having proved the relative compactness in Proposition~\ref{prop:rel compactness},  it follows from analogous arguments as used in the proofs of~\cite[Lemma 2]{HK94} and \cite[Theorem 3]{HK94}, that the limit of any convergent subsequence of the sequence of processes $\big\{\qq^N(t)\big\}_{t\geq 0}$ satisfies
\begin{equation}
q_i(t) = q_i(0)+\lambda \int_0^t \pi_{\qq(s)}(\mathcal{R}_i)\dif s - \int_0^t (q_i(s)-q_{i+1}(s))\dif s, \quad i=1,2,\ldots,b,
\end{equation}
for \emph{some} stationary measure $\pi_{\qq(t)}$ of the Markov process  $\ZZ_{\qq(t)}$ described in~\eqref{eq:slowprocess} satisfying $\pi_{\qq}\big\{\ZZ: Z_i=\infty\big\}=1$ if $q_i<1$. 

Now it remains to show that $\qq(t)$ \emph{uniquely} determines $\pi_{\qq(t)}$, and that $\pi_{\qq(s)}(\mathcal{R}_i)=p_{i-1}(\qq(s))$ described in~\eqref{eq:fluid-gen}. 
As mentioned earlier, in this proof we will now assume the specific assignment probabilities in~\eqref{eq:partition}, corresponding to the ordinary JSQ policy.
To see this, fix any $\qq=(q_1,\ldots,q_b)\in \mathcal{S}$.  
Observe that due to summability of the components of~$\qq$, there exists $0\leq m<\infty$, such that $q_{m+1}<1$ and $q_1=\ldots=q_m=1$,
with the convention that $q_0\equiv 1$ and $q_{b+1}\equiv 0$ if $b<\infty$. In that case,
$$\pi_{\qq}\big(\big\{Z_{m+1}=\infty, Z_{m+2}=\infty,\ldots,Z_b=\infty\big\}\big)=1.$$
Also, 
note that $q_i = 1$ forces $\dif q_i/\dif t \leq 0$, i.e., $\lambda \pi_{\qq}(\mathcal{R}_i) \leq q_i-q_{i+1}$ for all $i = 1, \ldots, m$, and in particular $\pi_{\qq}(\mathcal{R}_i) = 0$ for all $i = 1, \ldots, m - 1.$ Thus,
$$\pi_{\qq}\big(\big\{Z_1=0,Z_2=0,\ldots,Z_{m-1}=0\big\}\big)=1.$$

Therefore, $\pi_\qq$ is determined only by the stationary distribution of the $m^{\mathrm{th}}$ component, which can be described as a birth-death process
\begin{equation}\label{eq:bdprocess}
Z \rightarrow 
\begin{cases}
Z+1& \quad\mbox{ at rate }\quad q_m-q_{m+1}\\
Z-1& \quad\mbox{ at rate }\quad \lambda\ind{Z>0}
\end{cases}
\end{equation}
and let $\pi^{(m)}$ be its stationary distribution. 
Now it is enough to show that $\pi^{(m)}$ is uniquely determined by $\qq$. 
First observe that the process on $\bZ$ described in~\eqref{eq:bdprocess} is reducible, and can be decomposed into
two irreducible classes given by $\mathbbm{Z}$ and $\{\infty\}$, respectively.
Therefore, if $\pi^{(m)}(Z=\infty)=0$ or $1$, then it is unique. 
Indeed, if $\pi^{(m)}(Z=\infty)=0$, then $Z$ is birth-death process on $\mathbbm{Z}$ only, and hence it has a unique stationary distribution. 
Otherwise, if $\pi^{(m)}(Z=\infty)=1$, then it is trivially unique. 
Now we distinguish between two cases depending upon whether $q_m-q_{m+1}\geq \lambda$ or not.

Note that if $q_m-q_{m+1}\geq\lambda$, then $\pi^{(m)}(Z\geq k)=1$ for all $k\geq 0$. 
On $\bZ$ this shows that $\pi^{(m)}(Z=\infty)=1$.
Furthermore, if $q_m-q_{m+1}<\lambda$, we will show that $\pi^{(m)}(Z=\infty)=0$.
On the contrary, assume $\pi^{(m)}(Z=\infty)=\varepsilon\in (0,1]$.
Also, let $\hat{\pi}^{(m)}$ be the unique stationary distribution of the birth-death process in~\eqref{eq:bdprocess} on $\mathbbm{Z}$.
Therefore, 
$$\pi_\qq(\mathcal{R}_m)=\pi^{(m)}(Z>0)=(1-\varepsilon)\hat{\pi}^{(m)}(Z>0)+\varepsilon = (1-\varepsilon)\frac{q_m-q_{m+1}}{\lambda}+\varepsilon.$$
Substituting into the differential form of the fluid equation~\eqref{eq:fluidfinal} at the given time $t$, we obtain that 
\begin{align*}
\frac{\dif q_m(t)}{\dif t} = \lambda \Big[(1-\varepsilon)\frac{q_m-q_{m+1}}{\lambda}+\varepsilon \Big] - (q_m - q_{m+1}) = -\varepsilon(q_m-q_{m+1}) +\lambda \varepsilon >0,
\end{align*}
where the last inequality follows since we are considering the case when $q_m - q_{m+1}<\lambda$.
Now since $q_m(t)=1$, this leads to a contradiction for any $\varepsilon>0$, and hence it must be the case that $\pi^{(m)}(Z=\infty)=0$. 

Therefore, for all $\qq\in \mathcal{S}$, $\pi_\qq$ is uniquely determined by $\qq$. 
Furthermore, we can identify the expression for $\pi_q(\mathcal{R}_i)$ as
\begin{equation}
\pi_\qq(\mathcal{R}_i)=
\begin{cases}
\min\big\{(q_m-q_{m+1})/\lambda,1\big\}& \quad\mbox{ for }\quad i=m,\\
1- \min\big\{(q_m-q_{m+1})/\lambda,1\big\} & \quad\mbox{ for }\quad i=m+1,\\
0&\quad \mbox{ otherwise,}
\end{cases}
\end{equation}
and hence $\pi_{\qq(s)}(\mathcal{R}_i)=p_{i-1}(\qq(s))$ as claimed.
\end{proof}

\subsection{Equivalence on fluid scale}\label{ssec:equivjsq}
Having proved Theorem~\ref{th:genfluid}, it suffices to prove the universality property stated in the next proposition. This will complete the proof of Theorem~\ref{fluidjsqd}.
\begin{proposition}\label{prop:samefluid}
If $d(N)\to\infty$ as $N\to\infty$, then the JSQ$(d(N))$ scheme and the ordinary JSQ policy have the same fluid limit.
\end{proposition}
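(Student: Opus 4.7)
The plan is to carry out the three-way comparison sketched in Section~\ref{subsec:strategy}. Since $d(N)\to\infty$, I can pick an intermediate sequence $n(N)$ with $n(N)\to\infty$, $n(N)/N\to 0$, and $(1-n(N)/N)^{d(N)}\to 0$; the choice $n(N) = \lceil N/\sqrt{d(N)}\rceil$ works, since then $n(N)/N\to 0$ and $(1-n(N)/N)^{d(N)} \leq \exp(-\sqrt{d(N)})\to 0$. The goal is to show that the fluid limits of JSQ, CJSQ$(n(N))$, JSQ$(n(N),d(N))$ and JSQ$(d(N))$ all coincide with the one established in Theorem~\ref{th:genfluid}.

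For the JSQ--CJSQ$(n(N))$ link, I would first observe that the MJSQ$(n(N))$ scheme behaves exactly like the ordinary JSQ policy run on a reduced pool of $N - n(N)$ servers. Since $n(N)/N\to 0$, Theorem~\ref{th:genfluid} applied to this reduced system yields the same fluid-scaled limit as for the ordinary JSQ policy, with $L^{\MJSQ}(t)/N\pto 0$ and $L^{\JSQ}(t)/N\pto 0$ uniformly on $[0,T]$ (the subcritical fluid fixed point has $q_i^\star=0$ for $i\geq 2$, so overflow mass vanishes in the limit). For any scheme in CJSQ$(n(N))$, the two-sided bound of Corollary~\ref{cor:bound}, divided by $N$, sandwiches $Q_m^{\CJSQ}/N$ between two expressions that share the same limit (the common JSQ fluid limit), so $Q_m^{\CJSQ}/N\to q_m$ componentwise. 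Upgrading this to $\ell_1$-convergence uses Theorem~\ref{th:genfluid}'s $\ell_1$ fluid convergence together with a uniform tail bound derived from the stochastic dominance between CJSQ and MJSQ in Proposition~\ref{prop:stoch-ord}\eqref{item:cjsq-mjsq}.

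For the JSQ$(d(N))$--JSQ$(n(N),d(N))$ link, note that by construction JSQ$(n(N),d(N))\in\text{CJSQ}(n(N))$, and Proposition~\ref{prop:differ} yields
\begin{equation*}
\Pro{\Delta(T)\geq \varepsilon N\given A(T)}\leq \frac{A(T)}{\varepsilon N}\Big(1-\frac{n(N)}{N}\Big)^{d(N)}.
\end{equation*}
Since $A(T)/N\pto \lambda T$ and $(1-n(N)/N)^{d(N)}\to 0$, one obtains $\Delta(T)/N\pto 0$. Proposition~\ref{prop:stoch-ord2} then produces
\begin{equation*}
\sup_{t\in[0,T]}\norm{\qq^{\jsq(d(N))}(t) - \qq^{\jsq(n(N),d(N))}(t)}\leq \frac{2\Delta(T)}{N}\pto 0,
\end{equation*}
so JSQ$(d(N))$ and JSQ$(n(N),d(N))$ share their fluid limit. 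Chaining this with the first step closes the loop: the fluid limit of JSQ$(d(N))$ equals that of JSQ$(n(N),d(N))\in\text{CJSQ}(n(N))$, which in turn equals that of the ordinary JSQ policy.

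The main obstacle is the first step. The Corollary~\ref{cor:bound} inequalities are not componentwise comparisons of $Q_m$: they involve tail sums $\sum_{i\geq m}Q_i$ and cumulative losses, so convergence of $Q_m^{\JSQ}/N$ and $Q_m^{\MJSQ}/N$ alone does not pin down $Q_m^{\CJSQ}/N$ in the $\ell_1$ sense. This is precisely the analytical complication flagged in the introduction that distinguishes the single-server setting from the companion infinite-server model; it is why $\ell_1$-level fluid convergence (rather than merely product-topology convergence) had to be proved for the ordinary JSQ policy in Theorem~\ref{th:genfluid}, and why controlling the overflow terms $L^{\JSQ}/N$ and $L^{\MJSQ}/N$ uniformly on $[0,T]$ is a separate substep within the first step.
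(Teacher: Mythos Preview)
Your proposal is correct and follows essentially the same route as the paper: the same choice $n(N)=N/\sqrt{d(N)}$, the same three-step comparison (MJSQ$(n(N))$ via the reduced-pool argument, then CJSQ$(n(N))$ via Corollary~\ref{cor:bound} together with the tail bound in Proposition~\ref{prop:stoch-ord}, then JSQ$(d(N))$ via Propositions~\ref{prop:stoch-ord2} and~\ref{prop:differ}). Your discussion of the $\ell_1$-upgrade obstacle in the CJSQ step is in fact more explicit than the paper's own terse treatment of that point.
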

The proof of the above proposition uses the S-coupling 
 results from Section~\ref{sec:coupling}, and consists of three steps:
\begin{enumerate}[{\normalfont (i)}]
\item First we show that if $n(N)/N\to 0$ as $N\to\infty$, then the MJSQ$(n(N))$ scheme has the same fluid limit as the ordinary JSQ policy.
\item Then we apply Corollary~\ref{cor:bound} to prove that as long as $n(N)/N\to 0$, \emph{any} scheme from the class CJSQ$(n(N))$ has the same fluid limit as the ordinary JSQ policy.
\item Next, using Propositions~\ref{prop:stoch-ord2} and~\ref{prop:differ} we establish that if $d(N)\to\infty$, then for \emph{some} $n(N)$ with $n(N)/N\to 0$, the  JSQ$(d(N))$ scheme and the JSQ$(n(N),d(N))$ scheme have the same fluid limit. The proposition then follows by observing that the JSQ$(n(N),d(N))$ scheme belongs to the class CJSQ$(n(N))$.
\end{enumerate}

\begin{proof}[Proof of Proposition~\ref{prop:samefluid}]
First, to show Claim~(i) above, define $\bar{N}=N-n(N)$ and $\bar{\lambda}(\bar{N})=\lambda(N)$.
Observe that the MJSQ$(n(N))$ scheme with $N$ servers can be thought of as the ordinary JSQ policy with $\bar{N}$ servers and arrival rate $\bar{\lambda}(\bar{N})$.
Also, since $n(N)/N\to 0$,
\begin{align*}
\frac{\bar{\lambda}(\bar{N})}{\bar{N}}=\frac{\lambda(N)}{N-n(N)}\to \lambda\quad \text{as}\quad \bar{N}\to\infty.
\end{align*}
Furthermore, observe that the fluid limit of the JSQ policy in Theorem~\ref{th:genfluid} as given by~\eqref{eq:fluidfinal} is characterized by the parameter $\lambda$ only, and hence the fluid limit of the MJSQ$(n(N))$ scheme is the same as that of the ordinary JSQ policy.

Second, observe from the fluid limit of the JSQ policy that if $\lambda< 1$, then for any buffer capacity $b\geq 1$, and any starting state, the fluid-scaled cumulative overflow is negligible, i.e., for any $t\geq 0$, $L^N(t)/N\pto 0$.
Since the above fact is induced by the fluid limit only, the same holds for the MJSQ$(n(N))$ scheme.
Therefore, using the lower and upper bounds in Corollary~\ref{cor:bound} and the tail bound in Proposition~\ref{prop:stoch-ord}, we obtain Claim~(ii) above.

Finally, choose $n(N)=N/\sqrt{d(N)}$, and consider the JSQ$(n(N),d(N))$ scheme. 
Since $d(N)\to\infty$, it is clear that $n(N)/N\to 0$ as $N\to\infty$.
Also, if $\Delta^N(T)$ denotes the cumulative number of times that the JSQ($d(N)$) scheme and JSQ$(n(N),d(N))$ scheme differ in decision up to time $T$, then Proposition~\ref{prop:differ} yields
\begin{align*}
\Pro{\Delta^N(T)\geq \varepsilon N\given A^N(T)}\leq \frac{A^N(T)}{\varepsilon N}\left(1-\frac{n(N)}{N}\right)^{d(N)}=\frac{A^N(T)}{\varepsilon N}\left(1-\frac{1}{\sqrt{d(N)}}\right)^{d(N)}.
\end{align*}
Since $\big\{A^N(T)/N\big\}_{N\geq 1}$ is a tight sequence of random variables, we have
\begin{align*}
\frac{A^N(T)}{\varepsilon N}\left(1-\frac{1}{\sqrt{d(N)}}\right)^{d(N)}\pto 0
\quad\text{as}\quad N\to\infty,
\end{align*}
and hence, $\Delta^N(T)/N\pto 0$. 
Therefore, applying the $\ell_1$ distance bound stated in Proposition~\ref{prop:stoch-ord2}, we obtain Claim~(iii).
The proof is then completed by observing that the JSQ$(n(N),d(N))$ scheme belongs to the class CJSQ$(n(N))$.
\end{proof}

\begin{proof}[Proof of Theorem~\ref{th:batch}]
For any $\varepsilon>0$, define 
$$T^N_\varepsilon:=\inf\big\{t:Q_1^{\sss d(N)}(t)>(\lambda+\varepsilon)N\big\}.$$
Now the proof consists of two main steps. First we show that if $d(N)\geq \ell(N)/(1-\lambda-\varepsilon)$ for some $\varepsilon>0$, then there exists an $\varepsilon'>0$, such that if for some $T>0$, $\Pro{T^N_{\varepsilon'}>T}\to 1$ as $N\to\infty$,
 then the number of times that the JSQ($d(N)$) scheme and the ordinary JSQ policy differ in decision in $[0,T]$ is $\op(N)$. This then implies that up to such a time $T$, it is enough to consider the fluid limit of the ordinary JSQ policy with batch arrivals. Second, we show that if the conditions stated in Theorem~\ref{th:batch} hold, then for any finite time $T> 0$, $\Pro{T^N_{\varepsilon'}>T}\to 1$ as $N\to\infty$. This will complete the proof.

To prove the first part, consider the JSQ($d(N)$) scheme in case of batch arrivals. Choose $\varepsilon'=\varepsilon/2$, and assume that $T>0$ is such that $\Pro{T^N_{\varepsilon'}>T}\to 1$ as $N\to\infty$.
Let $I_i$ denote the number of idle servers among $d(N)$ randomly chosen servers for the $i^{\mathrm{th}}$ batch arrival, and define $W^N(t)$ to be the cumulative number of tasks that have not been assigned to some idle server, up to time $t$. If $A^N(t)$ denotes the number of batch arrivals that occurred up to time $t$, then 
$$W^N(t)=\sum_{i=1}^{A^N(t)}[\ell(N)-I_i]^+.$$
We show that $W^N(t)/N\pto 0$ for all $t\leq T$ for $d(N)=\ell(N)/(1-\lambda-\varepsilon)$. Observe that $I_i$ follows a Hypergeometric distribution with sample size $d(N)$, and population size $N$ containing  $N-Q_1^N(t)\geq (1-\lambda-\varepsilon/2) N$ successes. Define $J_i$ to be distributed as $d(N)-I_i$. Then
$$[\ell(N)-I_i]^+=k\iff J_i=d(N)-\ell(N)+k.$$
Therefore, for $c=1-\lambda-\varepsilon/2$ we have,
$$\E{[\ell(N)-I_i]^+}=\sum_{k\geq 1}k\Pro{J_i=(1-c)d(N)+k}\leq d(N)\Pro{J_i\geq (1-c)d(N)}.$$
Now, from \cite{H63, LP14}, we know
$$\Pro{J_i\geq (1-c)d(N)}\leq \exp(-d(N)H(\lambda,c)),$$
where $$H(\lambda,c)=(1-c)\log\left(\frac{1-c}{\lambda}\right)+c\log\left(\frac{c}{1-\lambda}\right)>0,$$
since $c<1-\lambda.$
Therefore, 
\begin{equation}
\begin{split}
\Pro{W^N(t)>\varepsilon N}
&\leq\frac{\E{W^N(t)}}{\varepsilon N}\\
&\leq \frac{d(N)}{\varepsilon N}\times  \frac{\lambda(N) t}{\ell(N)}\times \exp(-d(N)H(\lambda,c))\\
&=O(\exp(-d(N)H(\lambda,c))).
\end{split}
\end{equation}
This implies that whenever $\ell(N)\to\infty$, if $d(N)=\ell(N)/(1-\lambda-\varepsilon/2)$, then $W^N(t)$ is $o_P(N)$ for all $t\leq T$. 
Now the analysis of the batch arrival with ordinary JSQ policy in Theorem~\ref{th:batchjsq} below, up to time $T$, shows that the process
$\big\{\qq^N(t)\big\}_{0\leq t\leq T}$ converges to the deterministic limit $\big\{\qq(t)\big\}_{0\leq t\leq T}$, described by \eqref{eq:batch}.

Therefore, it is enough to show that any $T>0$ satisfies the required criterion. This can be seen by observing that for any $T\geq0$, and any $\varepsilon'>0$,
\begin{align*}
&\Pro{T_{\varepsilon'}^N\leq T}\leq \Pro{T_{\varepsilon'/2}^N< T}\\
&\leq\Pro{\sup_{t\in[0,T]}Q_1^{\sss d(N)}(t)>(\lambda+\varepsilon'/2)N}\\
&\leq\Pro{\sup_{t\in[0,T]}Q_1^\JSQ(t)>(\lambda+\varepsilon'/4)N}\times\Pro{\sup_{t\in[0,T]}|Q_1^\JSQ(t)-Q_1^{\sss d(N)}(t)|\leq \frac{N\varepsilon'}{4}}\\
&\hspace{2.5cm}+\Pro{\sup_{t\in[0,T]}|Q_1^\JSQ(t)-Q_1^{\sss d(N)}(t)|> \frac{N\varepsilon'}{4}}\longrightarrow 0\quad\mathrm{as}\quad N\to\infty.
\end{align*}
Therefore the proof is complete.
\end{proof}

\begin{theorem}{\normalfont (Batch arrivals JSQ)}
\label{th:batchjsq}
Consider the batch arrival scenario with growing batch size $\ell(N)\to\infty$ and $\lambda(N)/N\to\lambda<1$ as $N\to\infty$. For the JSQ policy, if $q^{\sss d(N)}_1(0)\pto q_1^\infty\leq \lambda$, and $q_i^{\sss d(N)}(0)\pto 0$ for all $i\geq 2$, then the sequence of processes
$\big\{\qq^{\sss d(N)}(t)\big\}_{t\geq 0}$ converges weakly to the limit $\big\{\qq(t)\big\}_{t\geq 0}$, described as follows: 
\begin{equation}\label{eq:batchjsq}
q_1(t) = \lambda + (q_1^\infty-\lambda)\e^{-t},\quad
q_i(t)\equiv 0\quad \mathrm{for\ all}\quad i= 2,\ldots,b.
\end{equation}
\end{theorem}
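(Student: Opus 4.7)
The plan is to show that, on any finite horizon $[0,T]$, the number of idle servers stays comfortably above $\ell(N)$ with high probability, so every batch of size $\ell(N)$ is absorbed by $\ell(N)$ distinct idle servers under JSQ. This collapses the dynamics of the tail components to pure departures and reduces the first coordinate to a scalar ODE with the claimed solution.

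First, I would introduce the stopping time $\tau_N := \inf\{t \geq 0 : N - Q_1^N(t) < \ell(N)\}$. For $t \leq \tau_N$, at every batch-arrival epoch there are at least $\ell(N)$ idle servers, so JSQ assigns the entire batch to $\ell(N)$ distinct empty queues. Consequently, arrivals do not affect $Q_i^N$ for $i \geq 2$, and these components can change only through downward departure jumps. Combined with the hypothesis $q_i^N(0) \pto 0$ for $i \geq 2$ (and the accompanying $\ell_1$-tail condition on the initial state that makes the $\ell_1$-valued limit well defined), monotonicity yields $\sup_{t\in[0,T\wedge\tau_N]} q_i^N(t) \pto 0$ for each $i \geq 2$ and also $\sum_{i\geq 2} q_i^N(t) \pto 0$ uniformly on $[0,T\wedge\tau_N]$.

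Next, I would derive the fluid limit for $Q_1^N$ on $[0, \tau_N]$. Arrivals drive $Q_1^N$ upward in jumps of size $\ell(N)$ at Poisson rate $\lambda(N)/\ell(N)$ (so total intensity $\lambda(N)$), while departures from servers with exactly one task occur at rate $Q_1^N - Q_2^N$. Writing the martingale decomposition along the lines of Proposition~\ref{prop:mart-rep} and using that the fluid-scaled batch-jump size $\ell(N)/N \to 0$ (implicit in the problem setup, since otherwise the fluid limit would be discontinuous), the tightness and martingale-vanishing arguments from Subsection~\ref{ssec:fluidjsq} carry over and yield that $q_1^N(\cdot \wedge \tau_N)$ converges weakly to the unique solution of $\dot q_1(t) = \lambda - q_1(t)$ with $q_1(0) = q_1^\infty$, namely $q_1(t) = \lambda + (q_1^\infty - \lambda)\e^{-t}$.

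The final step is a bootstrap that removes the stopping time. Since $q_1^\infty \leq \lambda < 1$, the explicit solution satisfies $q_1(t) \leq \lambda$ for all $t \geq 0$. Fixing $\varepsilon > 0$ with $\lambda + \varepsilon < 1$, weak convergence of $q_1^N(\cdot \wedge \tau_N)$ gives $\Pro{\sup_{t\in[0,T]} q_1^N(t \wedge \tau_N) > \lambda + \varepsilon} \to 0$, and because $\ell(N)/N \to 0$ this forces $\Pro{\tau_N > T} \to 1$; hence the fluid limit obtained on $[0, T\wedge\tau_N]$ extends to all of $[0,T]$. The main obstacle is precisely closing this loop without circularity: the standard remedy, which I would follow, is to carry out the entire analysis for the stopped process $q_1^N(\cdot \wedge \tau_N)$ from the outset and then exploit the sub-critical fluid limit to show that the stopping is not triggered in the weak limit.
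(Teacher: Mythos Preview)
Your proposal is correct and follows essentially the same route as the paper: write the martingale decomposition for $Q_1^N$, show the scaled martingales vanish, and identify the limit as the unique solution of $\dot q_1 = \lambda - q_1$. The core mechanism is identical.

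The one genuine difference is that you are more careful about justifying the key claim that every batch is absorbed by idle servers. The paper simply asserts ``before time $T$, all the arriving tasks join idle servers'' and proceeds (in fact it writes the departure intensity as $Q_1^N$ rather than $Q_1^N-Q_2^N$, tacitly taking $Q_2^N\equiv 0$). Your stopping-time $\tau_N$ plus bootstrap closes this gap rigorously and also handles the hypothesis $q_i^N(0)\pto 0$ rather than $Q_2^N(0)=0$. You are also right to flag that $\ell(N)/N\to 0$ is needed for the batch-arrival martingale to vanish on the fluid scale: the predictable quadratic variation of the arrival martingale is $\ell(N)\lambda(N)t$, not $\lambda(N)t$ as the paper writes, so $\langle M_1^N\rangle(T)/N^2\to 0$ requires $\ell(N)=o(N)$. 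In short, your argument is the paper's argument done with more care; nothing substantively new is required.
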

\begin{proof}
Fix any finite time $T\geq 0$.
To analyze the JSQ policy with batch arrivals, observe that before time $T$, all the arriving tasks join idle servers. Therefore, assuming $Q_2^N(0)=0$, for all $t\leq T$, the evolution for $Q_1^N$ can be written as
\begin{equation}
Q_1^N(t)=Q_1^N(0)+\ell(N)A\left(t\lambda(N)/\ell(N)\right)-D\left(\int_0^tQ_1^N(s)ds\right),
\end{equation}
where $A$ and $D$ are independent unit-rate Poisson processes.
Using the random time change of unit-rate Poisson processes \cite[Lemma~3.2]{PTRW07}, and applying the arguments in \cite[Lemma~3.4]{PTRW07}, the above process scaled by $N$, then admits the martingale decomposition
\begin{equation}\label{eq:fluid-batch}
q_1^N(t)=q_1^N(0)+\frac{M^N_1(t)}{N}+\lambda t-\frac{M^N_2(t)}{N}-\int_0^tq_1^N(s)ds,
\end{equation}
where 
\begin{align*}
M^N_1(t)&=\ell(N)A\left(t\lambda(N)/\ell(N)\right)-t\lambda(N),\\
M^N_2(t)&=D\left(\int_0^tQ_1^N(s)ds\right)-\int_0^tQ_1^N(s)ds,
\end{align*}
are square integrable martingales with respective quadratic variation processes given by
\begin{align*}
\langle M^N_1\rangle(t)&=t\lambda(N),\\
\langle M^N_2\rangle(t)&=\int_0^tQ_1^N(s)ds.
\end{align*}
Now, since for any $T\geq 0$, $\langle M^N_1\rangle(T)/N^2\to 0$, and $\langle M^N_2\rangle(T)/N^2\pto 0$, from the stochastic boundedness criterion for square integrable martingales \cite[Lemma~5.8]{PTRW07}, we get that both $\big\{M_1^N(t)/N\big\}_{t\geq 0}\dto 0$ and $\big\{M_2^N(t)/N\big\}_{t\geq 0}\dto 0$. Therefore, from the continuous mapping theorem and \eqref{eq:fluid-batch}, it follows that $\big\{q_1^N(t)\big\}_{t\geq 0}$ as $N\to\infty$ converges weakly to a deterministic limit described by the integral equation
\begin{equation}
q_1(t)=q_1^\infty+\lambda t-\int_0^tq_1(s)ds
\end{equation}
having~\eqref{eq:batch} as the unique solution.
This completes the proof of the fluid limit of JSQ with batch arrivals.
\end{proof}

\subsection{Global stability and interchange of limits}\label{ssec:globstab}

To prove the interchange of limits result stated in Proposition~\ref{th:interchange}, we will establish the global stability of the fixed point, i.e., all fluid paths converge to the fixed point in~\eqref{eq:fixed point} as $t\to\infty$. 
This is formally stated in the following lemma.
\begin{lemma}\label{lem:global-stab}
Let $\qq(t)$ be the fluid limit, i.e., the solution of the dynamical system described by the system of integral equations in~\eqref{eq:fluid}.
For any $\qq^\infty\in \mathcal{S}$, if $\qq(0) = \qq^\infty$, then $\qq(t)\to
\qq^*$ as $t\to\infty$, where $\qq^*$ is defined as in \eqref{eq:fixed point}.
\end{lemma}
In case of the JSQ$(d)$ scheme with fixed $d$, the global stability is proved by constructing a Lyapunov function that measures the `distance' (in terms of a weighted $L_1$-norm) between the trajectory and the fixed point, and that strictly decreases everywhere except at the fixed point, see~\cite[Theorem 3.6]{Mitzenmacher1996}.
In case of the ordinary JSQ policy however, we can exploit a more direct method to establish the global stability, as further detailed below.
\begin{proof}[Proof of Lemma~\ref{lem:global-stab}]
The proof follows in two steps: we will first establish that as $t\to\infty$, $q_1(t)\to\lambda<1$, and then show that $q_2(t)\to 0$.

Observe that the rate of change of $q_1(t)$ is $\lambda p_0(\qq(t))-(q_1(t)-q_2(t))$.
For any $\varepsilon\geq 0$, if $q_1(t)\leq \lambda-\varepsilon$, then $p_0(\qq(t))=1$, so that the rate of change is  $\lambda -(q_1(t)-q_2(t))\geq \varepsilon$, i.e., positive and bounded away from zero when $\varepsilon>0$.
Also, $q_1(t)$ cannot decrease if $q_1(t)\leq\lambda$.
This shows that for all $\varepsilon>0$, there exists a time $t_0 = t_0(\varepsilon, \qq^\infty)$, such that, $q_1(t)\geq \lambda -\varepsilon$ for all $t\geq t_0$.
Thus, $\liminf_{t\to\infty} q_1(t)\geq\lambda$.

On the other hand, we claim that $\limsup_{t\to\infty} q_1(t)\leq \lambda$.
Suppose not, i.e., assume $\limsup_{t\to\infty}q_1(t) = \lambda+\varepsilon$ for some $\varepsilon>0$.
Because $q_1(t)$ is non-decreasing when $q_1(t)\leq \lambda$, there must exist a $t_0$ such that $q_1(t)\geq \lambda$ $\forall\ t\geq t_0$.
The high-level idea behind the claim is as follows.
If $q_1(t)$ were to remain above $\lambda$ by a non-vanishing margin, then the cumulative number of departures would exceed the cumulative number of arrivals by an infinite amount, which cannot occur since the initial number of tasks is bounded.
More formally,
\begin{align*}
\sum_{i=1}^b q_i(t) &= \sum_{i=1}^bq_i(t_0) + \lambda\int_{t_0}^t\sum_{i=1}^b p_{i-1}(\qq(s))\dif s - \int_{t_0}^t q_1(s)\dif s\\
& \leq \sum_{i=1}^bq_i(t_0)  - \int_{t_0}^t [q_1(s) - \lambda]^+\dif s,
\end{align*}
and thus,
$$\int_{t_0}^t [q_1(s) - \lambda]^+\dif s\leq \sum_{i=1}^b q_i(t) - \sum_{i=1}^bq_i(t_0)<\infty.$$
This provides a contradiction with $\limsup_{t\to\infty} q_1(t) = \lambda +\varepsilon$, since the rate of decrease of $q_1(t)$ is at most~1.
Therefore, $q_1(t)\to \lambda$ as $t\to\infty$.  

Consequently, for any $\qq^\infty\in\mathcal{S}$ and $\varepsilon>0$, if $\qq(0) = \qq^\infty$, then there exists a time $t_2 = t_2(\qq^\infty, \varepsilon)<\infty$, such that $q_1(t)\leq \lambda+\varepsilon$ for all $t\geq t_2$.
Thus choosing $\varepsilon = (1-\lambda)/2$ say, for all $t\geq t_2$, $q_1(t)<1$, and thus $p_0(\qq(t))=1$, i.e., $\sum_{i=2}^b p_{i-1}(\qq(t))=0$.
Define $q_{2+}(t) := \sum_{i = 2}^b q_i(t)$.
Observe that
\begin{align*}
q_{2+}(t)&= q_{2+}(t_2)+\lambda\int_{t_2}^t\sum_{i=2}^b p_{i-1}(\qq(s))\dif s- \int_{t_2}^tq_2(s)\dif s\\
&=  q_{2+}(t_2)- \int_{t_2}^tq_2(s)\dif s\qquad \mbox{for all}\quad t\geq t_2,
\end{align*}
which implies $q_2(t)\leq q_{2+}(t_2)\e^{-(t-t_2)}$.
Thus, $q_2(t)$ and consequently, $q_{2+}(t)$ converges to 0 as $t\to\infty$.
This completes the proof of global stability of the fixed point.
\end{proof}

\begin{proof}[Proof of Proposition~\ref{th:interchange}]
The proof follows in two steps: (i) we first establish that the sequence of stationary measures $\big\{\pi^{\sss d(N)}\big\}_{N\geq 1}$ is tight, and then (ii) show the interchange of limits.

(i) Observe that if $b<\infty,$ then the space $[0,1]^b$ is compact, and hence Prohorov's theorem implies that $\big\{\pi^{\sss d(N)}\big\}_{N\geq 1}$ is tight. 
Now assume $b=\infty.$
For any two positive integers $d_1\leq d_2$, note that at each arrival, the JSQ$(d_2)$ scheme polls more servers than the JSQ$(d_1)$ scheme.
Thus using the S-coupling and Proposition~\ref{prop:det ord}, we can conclude for every~$N$,
$$\sum_{i\geq m} Q_i^{d_2}\leq_{st}\sum_{i\geq m} Q_i^{d_1},\quad \mbox{for all}\quad m \geq 1.$$
In particular, putting $d_1=1$ and $d_2 = d(N)$,
\begin{equation}\label{eq:1vsdN}
\sum_{i\geq m} Q_i^{d(N)}\leq_{st}\sum_{i\geq m} Q_i^{1},\quad \mbox{for all}\quad m \geq 1.
\end{equation}
Let $\XX^N$ and $\YY^N$ denote random variables following the stationary distribution of two systems with $N$ servers under the JSQ$(d(N))$ and JSQ$(1)$ schemes, respectively. 
We will verify the tightness criterion stated in Lemma~\ref{lem:tightcond}.
Note that since $\XX^N$ takes value in $\mathcal{S}\subset [0,1]^\infty$, which is compact with respect to the product topology, Prohorov's theorem implies that $\big\{\XX^N\big\}_{N\geq 1}$ is tight with respect to the product topology.
To verify the condition in~\eqref{eq:smalltail}, note that the system under the JSQ$(1)$ scheme is essentially a collection of $N$ independent M/M/1 systems. Therefore, for each $k\geq 1$,
\begin{align*}
\varlimsup_{N\to\infty}\mathbbm{P}\Big(\sum_{i\geq k}X_i^N>\varepsilon\Big)
\leq \varlimsup_{N\to\infty}\mathbbm{P}\Big(\sum_{i\geq k}Y_i^N>\varepsilon\Big)
= (1-\lambda)\sum_{i\geq k}\lambda^i.
\end{align*}
Since $\lambda<1$, taking the limit $k\to\infty$, the right side of the above inequality tends to zero, and hence, the condition in~\eqref{eq:smalltail} is verified.

(ii) Now observe that since $\big\{\pi^{\sss d(N)}\big\}_{N\geq 1}$ is tight, any subsequence has a convergent further subsequence. 
Let $\big\{\pi^{\sss d(N_n)}\big\}_{n\geq 1}$ be any such convergent subsequence, with $\big\{N_n\big\}_{n\geq 1}\subseteq\N$, such that $\pi^{\sss d(N_n)}\dto\hat{\pi}$ as $n\to\infty$. We will show that $\hat{\pi}$ is unique and equals the measure $\pi^\star$, as defined in the statement of Proposition~\ref{th:interchange}.
Notice that if $\qq^{\sss d(N_n)}(0)\sim\pi^{\sss d(N_n)}$, then $\qq^{\sss d(N_n)}(t)\sim\pi^{\sss d(N_n)}$ for all $t\geq 0$.
Thus, $\hat{\pi}$ is an invariant distribution of the deterministic process $\big\{\qq(t)\big\}_{t\geq 0}$.
 This in conjunction with the global stability in Lemma~\ref{lem:global-stab} implies that $\hat{\pi}$ must be the fixed point of the fluid limit.  
Thus, we have shown the convergence of the stationary measure.
\end{proof}

\section{Diffusion-Limit Proofs}\label{sec:diffusion} 
In this section we prove the diffusion-limit results for the JSQ$(d(N))$ scheme stated in Theorem~\ref{diffusionjsqd}, and the almost necessity condition for diffusion-level optimality stated in Theorem~\ref{th:diff necessary}.
As noted in Subsection~\ref{subsec:strategy}, the diffusion limit 
for the ordinary JSQ policy is obtained in~\cite[Theorem~2]{EG15}, and characterized by~\eqref{eq:diffusionjsqd}.
Therefore it suffices to prove the universality property stated in the next proposition. 
\begin{proposition}\label{prop:samedif}
If $d(N)/(\sqrt{N}\log(N))\to\infty$ as $N\to\infty$, then the JSQ$(d(N))$ scheme and the ordinary JSQ policy have the same diffusion limit.
\end{proposition}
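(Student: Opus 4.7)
The plan is to mirror the three-step strategy used for the fluid limit in Proposition~\ref{prop:samefluid}, but carry it through on the $\sqrt{N}$ diffusion scale. I will choose an intermediate parameter $n(N)$ with $n(N)\to\infty$ and $n(N) = o(\sqrt{N})$, and establish that (i)~the MJSQ$(n(N))$ scheme has the same diffusion limit as the ordinary JSQ policy, (ii)~any scheme in the class CJSQ$(n(N))$ inherits this limit through the pathwise sandwich in Corollary~\ref{cor:bound}, and (iii)~the JSQ$(n(N),d(N))$ scheme, which lies in CJSQ$(n(N))$, approximates JSQ$(d(N))$ to within $\op(\sqrt{N})$ in $\ell_1$.

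For step~(i), I would observe that the MJSQ$(n(N))$ scheme on $N$ servers is equivalent in law to the ordinary JSQ policy on $\bar{N} := N - n(N)$ servers fed by the same Poisson arrival process of rate $\lambda(N)$. Since $n(N) = o(\sqrt{N})$, the induced critical parameter satisfies
\begin{equation*}
\frac{\bar{N} - \lambda(N)}{\sqrt{\bar{N}}} = \frac{(N - \lambda(N)) - n(N)}{\sqrt{N - n(N)}} \longrightarrow \beta,
\end{equation*}
so \cite[Theorem~2]{EG15} applies verbatim and yields the limit~\eqref{eq:diffusionjsqd} for MJSQ$(n(N))$.

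For step~(iii), the assumption $d(N)/(\sqrt{N}\log N)\to\infty$ is exactly what permits a workable choice of $n(N)$. I would pick $\omega(N)\to\infty$ slowly enough that $d(N)/(\sqrt{N}\log N) \geq \omega(N)$, and set $n(N) := \lceil \sqrt{N}/\sqrt{\omega(N)}\,\rceil$, so that $n(N)\to\infty$, $n(N) = o(\sqrt{N})$, and
\begin{equation*}
\frac{n(N)\,d(N)}{N} \geq \sqrt{\omega(N)}\,\log N.
\end{equation*}
Proposition~\ref{prop:differ} combined with $(1-x)^d \leq \e^{-xd}$ and the estimate $A^N(T) = \Op(N)$ then gives, for any fixed $T\geq 0$ and $\varepsilon>0$,
\begin{equation*}
\Pro{\Delta^N(T) \geq \varepsilon\sqrt{N}} \leq \Pro{\frac{A^N(T)}{\varepsilon\sqrt{N}}\,N^{-\sqrt{\omega(N)}} \geq 1} \longrightarrow 0,
\end{equation*}
so the cumulative decision disagreement between JSQ$(d(N))$ and JSQ$(n(N),d(N))$ on $[0,T]$ is $\op(\sqrt{N})$. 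The $\ell_1$ bound of Proposition~\ref{prop:stoch-ord2} then upgrades this to
\begin{equation*}
\sup_{t\leq T}\sum_{i=1}^{b}\left|Q_i^{\sss d(N)}(t) - Q_i^{\mathrm{JSQ}(n(N),d(N))}(t)\right| = \op(\sqrt{N}).
\end{equation*}

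The main obstacle is step~(ii). Corollary~\ref{cor:bound} sandwiches any CJSQ$(n(N))$ scheme componentwise between JSQ and MJSQ$(n(N))$ in terms of tail sums and overflow counts, but upgrading this to joint convergence of the diffusion-scaled $\bar{\QQ}^{\sss d(N)}$---particularly for the coupled pair $(\bar Q_1,\bar Q_2)$, whose limit is tied together by the reflection process $U_1$---requires more than a one-sided tightness argument. I would use the S-coupling to establish joint convergence of the JSQ and MJSQ$(n(N))$ diffusion-scaled processes to the \emph{same} limit, and then control the overflow increments $L^{\JSQ}(t)/\sqrt{N}$ and $L^{\MJSQ}(t)/\sqrt{N}$, both of which should vanish in probability because the limit in~\eqref{eq:diffusionjsqd} concentrates on queue lengths at most $k$, so under the assumption $b\geq 2$ no task overflows at rate $\Theta(\sqrt{N})$. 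Once the sandwich is squeezed, combining~(i)--(iii) yields that JSQ$(d(N))$ shares the diffusion limit of the ordinary JSQ policy, completing the proof.
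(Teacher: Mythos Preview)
Your proposal is correct and follows essentially the same three-step route as the paper: (i)~MJSQ$(n(N))$ inherits the JSQ diffusion limit via re-parametrisation of the Halfin--Whitt coefficient, (ii)~the CJSQ$(n(N))$ class is squeezed through Corollary~\ref{cor:bound} once overflows are shown to be $o_P(\sqrt{N})$, and (iii)~Propositions~\ref{prop:stoch-ord2} and~\ref{prop:differ} control the $\ell_1$ gap between JSQ$(d(N))$ and JSQ$(n(N),d(N))$. The only substantive difference is the choice of the intermediate scale: the paper takes $n(N)=N\log N/d(N)$, which directly gives $(1-n(N)/N)^{d(N)}\approx 1/N$ and $n(N)/\sqrt{N}\to 0$, whereas you introduce an auxiliary slowly growing $\omega(N)$ and set $n(N)=\lceil\sqrt{N}/\sqrt{\omega(N)}\,\rceil$. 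Both choices are valid and lead to the same conclusion.

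Your caution about step~(ii) is well placed; the paper is terse here, simply invoking Corollary~\ref{cor:bound} once $L^{\JSQ}$ and $L^{\MJSQ}$ are shown to vanish. The point that actually closes the sandwich is that for $i\geq 3$ the limit $\bar Q_i$ is \emph{deterministic}, so $\sum_{i\geq m+1}\bar Q_i^{\JSQ}$ and $\sum_{i\geq m+1}\bar Q_i^{\MJSQ}$ converge in probability to the same constant when $m\geq 2$; combined with the a.s.\ ordering $\sum_{i\geq m}\bar Q_i^{\JSQ}\leq \sum_{i\geq m}\bar Q_i^{\MJSQ}+o_P(1)$ from Proposition~\ref{prop:stoch-ord}, this forces the random components $\bar Q_1,\bar Q_2$ of the upper and lower bounds to coincide in the limit as well. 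Your phrasing in terms of ``joint convergence of the S-coupled JSQ and MJSQ processes to the same limit'' captures exactly this mechanism, and is a more careful articulation than the paper provides.
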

The proof of the above proposition follows similar lines as that of Proposition~\ref{prop:samefluid}, leveraging again the S-coupling results from Section~\ref{sec:coupling}, and
involves three steps:
\begin{enumerate}[{\normalfont (i)}]
\item First we show that if $n(N)/\sqrt{N}\to 0$ as $N\to\infty$, then the MJSQ$(n(N))$ scheme has the same diffusion limit as the ordinary JSQ policy.
\item Then we use Corollary~\ref{cor:bound} to prove that as long as $n(N)/\sqrt{N}\to 0$, \emph{any} scheme from the class CJSQ$(n(N))$ has the same diffusion limit as the ordinary JSQ policy.
\item Next we establish using Propositions~\ref{prop:stoch-ord2} and~\ref{prop:differ} that if $d(N)/(\sqrt{N}\log(N))\to\infty$ as $N\to\infty$, then for \emph{some} $n(N)$ with $n(N)/\sqrt{N}\to 0$, the  JSQ$(d(N))$ scheme and the JSQ$(n(N),d(N))$ scheme have the same diffusion limit. The proposition then follows by observing that the JSQ$(n(N),d(N))$ scheme belongs to the class CJSQ$(n(N))$.
\end{enumerate}

\begin{proof}[Proof of Proposition~\ref{prop:samedif}]
To show Claim~(i) above, define $\bar{N}=N-n(N)$ and $\bar{\lambda}(\bar{N})=\lambda(N)$.
As mentioned earlier, the MJSQ$(n(N))$ scheme with $N$ servers can be thought of as the ordinary JSQ policy with $\bar{N}$ servers and arrival rate $\bar{\lambda}(\bar{N})$.
Also, since $n(N)/\sqrt{N}\to 0$,
\begin{align*}
\frac{\bar{N}-\bar{\lambda}(\sqrt{\bar{N}})}{\bar{N}}=\frac{N-n(N)-\lambda(N)}{\sqrt{N-n(N)}}\to \beta>0\quad \text{as}\quad \bar{N}\to\infty.
\end{align*}
Furthermore, observe that the diffusion limit of the JSQ policy in \cite[Theorem~2]{EG15} as given in~\eqref{eq:diffusionjsqd} is characterized by the parameter $\beta>0$, and hence the diffusion limit of the MJSQ$(n(N))$ scheme is the same as that of the ordinary JSQ policy.

Observe from the diffusion limit of the JSQ policy that if $\beta>0$, then for any buffer capacity $b\geq 2$, and suitable initial state as described in~Theorem~\ref{diffusionjsqd}, the cumulative overflow is negligible, i.e., for any $t\geq 0$, $L^N(t)\pto 0$.
Indeed observe that if $b\geq 2$, and $\big\{\bQ_2^N(0)\big\}_{N\geq 1}$ is a tight sequence, then the sequence of processes $\big\{\bQ_2^N(t)\big\}_{t\geq 0}$ is stochastically bounded.
Therefore, on any finite time interval, there will be only $\Op(\sqrt{N})$ servers with queue length more than one, whereas, for an overflow event to occur all the $N$ servers must have at least two pending tasks.
Therefore, for any $t\geq 0$,
\begin{align*}
\limsup_{N\to\infty}\Pro{L^N(t)>0}&\leq \limsup_{N\to\infty}\Pro{\sup_{s\in[0,t]}Q^N_2(s)=N}\\
&\leq \limsup_{N\to\infty}\Pro{\sup_{s\in[0,t]}\bQ^N_2(s)=\sqrt{N}}=0.
\end{align*} 
Finally, since the above fact is implied by the diffusion limit only, the same holds for the MJSQ$(n(N))$ scheme.
Therefore, using the lower and upper bounds in Corollary~\ref{cor:bound} we arrive at Claim~(ii).

Finally, choose
$$n(N)=\frac{N\log N}{d(N)},$$
 and consider the JSQ$(n(N),d(N))$ scheme. 
Since $d(N)/( \sqrt{N}\log N)\to\infty$, it is clear that $n(N)/\sqrt{N}\to 0$ as $N\to\infty$.
Again, if $\Delta^N(T)$ denotes the cumulative number of times that the JSQ($d(N)$) scheme and JSQ$(n(N),d(N))$ scheme differ in decision up to time $T$, then Proposition~\ref{prop:differ} yields
\begin{equation}\label{eq:propdif}
\begin{split}
\Pro{\Delta^N(T)\geq \varepsilon \sqrt{N}\given A^N(T)}&\leq \frac{A^N(T)}{\varepsilon \sqrt{N}}\left(1-\frac{n(N)}{N}\right)^{ d(N)}\\
&\leq \frac{A^N(T)}{\varepsilon \sqrt{N}}\left(1-\frac{\log(N)}{d(N)}\right)^{ d(N)}\\
&\leq \frac{A^N(T)}{\varepsilon N} \sqrt{N}\left(1-\frac{\log(N)}{d(N)}\right)^{ d(N)}.
\end{split}
\end{equation}
Since $\big\{A^N(T)/N\big\}_{N\geq 1}$ is a tight sequence of random variables, and
\begin{align*}
&\sqrt{N}\left(1-\frac{\log(N)}{d(N)}\right)^{ d(N)}\to 0,
\quad\text{as}\quad N\to\infty,\\
\iff &\frac{1}{2}\log(N)+d(N)\log\left(1-\frac{\log(N)}{d(N)}\right)\to-\infty,\quad\text{as}\quad N\to\infty,\\
\Longleftarrow\hspace{.15cm}& \frac{1}{2}\log N - \frac{\log(N)}{d(N)}\times d(N)\to-\infty,\quad\text{as}\quad N\to\infty,
\end{align*}
from~\eqref{eq:propdif}, $\Delta^N(T)/N\pto 0$. 
Therefore, by invoking Proposition~\ref{prop:stoch-ord2}, we obtain Claim~(iii).
The proof is then completed by observing that the JSQ$(n(N),d(N))$ scheme belongs to the class CJSQ$(n(N))$.
\end{proof}

We next prove that the growth condition $d(N)/( \sqrt{N}\log N)\to\infty$ is nearly necessary: for any $d(N)$ such that $d(N)/(\sqrt{N}\log N)\to 0$ as $N\to\infty$, the diffusion limit of the JSQ$(d(N))$ scheme differs from that of the ordinary JSQ policy. 
Note that it is enough to consider the truncated system where any arrival to a server with at least two tasks is discarded, since the truncated system and the original system have the same diffusion limit~\cite{MBLW15}. 

Now consider the JSQ($d(N)$) scheme for some $d(N)$ such that $d(N)/(\sqrt{N}\log N)\to 0$ as $N\to\infty$, and assume on the contrary, the hypothesis that the process $\big\{(Q_1^{\sss d(N)}(t)-N)/\sqrt{N}, Q_2^{\sss d(N)}(t)/\sqrt{N}\big\}_{t\geq 0}$ converges to the diffusion limit corresponding that of the JSQ policy.
From a high level, the idea is to show that if the processes $(N-Q^{\sss d(N)}_1(\cdot))$ and $Q^{\sss d(N)}_2(\cdot)$ are $\Op(\sqrt{N})$, then in any finite time interval the number of tasks assigned to a server with queue length at least one, by the JSQ$(d(N))$ scheme with $d(N)/(\sqrt{N}\log N)\to 0$ does not scale with $\sqrt{N}$, which then immediately proves that the diffusion limit cannot coincide with that of the ordinary JSQ policy.

To formalize the above idea, we first define an artificial scheme below, which will serve as an asymptotic  lower bound to the number of servers with queue length two in a system following the JSQ$(d(N))$ scheme, under the hypothesis that the diffusion limit of the JSQ$(d(N))$ coincides with that of the ordinary JSQ policy.
For any nonnegative sequence $c(N)$, define a scheme $\Pi(c(N))$ which 
\begin{enumerate}[{\normalfont (i)}]
\item At each external arrival, assigns the task to a server having queue length one with probability $(1-c(N)/N)^{\sss d(N)}$, and else discards it (ties can be broken randomly),
\item If a departure occurs from a server with queue length one, then it immediately makes the server busy with a dummy arrival, i.e., essentially $\Pi(c(N))$ prohibits any server to remain idle.
\end{enumerate}
We use a coupling argument to show the following:
\begin{lemma}\label{lem:upper}
For any nonnegative sequence $c(N)$ with $c(N)/ \sqrt{N}\to \infty$ as $N\to\infty$, there exists a common probability space, such that for any $T>0$, 
$$\Pro{\sup_{t\in [0,T]}\left\{Q_2^{\sss d(N)}(t)-Q_2^{\sss \Pi(c(N))}(t)\right\}\geq 0}\longrightarrow 1\quad\mathrm{as}\quad N\to\infty,$$
provided $Q_2^{\sss d(N)}(0)\geq Q_2^{\sss \Pi(c(N))}(0)$ for all sufficiently large $N$,
and the hypothesis that the sequence of processes $\big\{(N-Q_1^{\sss d(N)}(t))/\sqrt{N}\big\}_{t\geq 0}$ and $\big\{Q_2^{\sss d(N)}(t)/\sqrt{N}\big\}_{t\geq 0}$ are stochastically bounded.
\end{lemma}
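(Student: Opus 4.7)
The plan is to construct an explicit coupling of the JSQ$(d(N))$ system and the artificial $\Pi(c(N))$ system on a common probability space, carve out a high-probability ``good event'' from the stochastic boundedness hypothesis, and argue that on this event the order $Q_2^{\sss d(N)}(t)\geq Q_2^{\sss \Pi(c(N))}(t)$ is propagated through $[0,T]$ with probability tending to one.

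First, for any fixed $\eta>0$, I would invoke the stochastic boundedness hypothesis to choose $M>0$ large enough that the good event
\[
E_N \;=\; \Big\{\sup_{t\in[0,T]}(N-Q_1^{\sss d(N)}(t)) \leq M\sqrt{N}\Big\} \cap \Big\{\sup_{t\in[0,T]} Q_2^{\sss d(N)}(t) \leq M\sqrt{N}\Big\}
\]
satisfies $\liminf_{N\to\infty}\Pro{E_N}\geq 1-\eta$. Since $c(N)/\sqrt{N}\to\infty$, for all sufficiently large $N$ one has $M\sqrt{N}\leq c(N)$, so on $E_N$ the number of idle servers in JSQ$(d(N))$ stays below $c(N)$ throughout~$[0,T]$.

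Next, I would build the coupling as follows. The two systems share a single Poisson arrival stream of rate $\lambda(N)$. At each arrival epoch $\tau_k$, draw $d(N)$ i.i.d.\ uniform variables $R_k^1,\ldots,R_k^{d(N)}$ on $\{1,\ldots,N\}$ and interpret them as \emph{ranks} (rank $1$ being the shortest queue, rank $N$ the longest). By exchangeability of server labels, using ranks to identify the JSQ$(d(N))$ samples reproduces the correct marginal of the scheme, and the task is assigned to the server of smallest sampled rank. For $\Pi(c(N))$, I declare an arrival to be an ``increase event'' exactly when $\min_i R_k^i > c(N)$; this occurs with probability $(1-c(N)/N)^{d(N)}$, matching the definition of $\Pi(c(N))$, and whenever it does, the task is routed to a uniformly chosen queue-one server using independent randomness. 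Departures are coupled by the S-coupling of Section~\ref{sec:coupling}: attach a unit-rate Poisson clock to each ordered position $j$, and when it fires, remove a task (with the auto-refill convention in $\Pi(c(N))$) from the $j$-th shortest queue in each system.

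I would then verify preservation of the order along the coupled sample path on $E_N$. For the S-coupled departures, a firing at position $j$ decreases $Q_2^{\sss d(N)}$ iff $j>N-Q_2^{\sss d(N)}$ and decreases $Q_2^{\sss \Pi(c(N))}$ iff $j>N-Q_2^{\sss \Pi(c(N))}$; whenever the running gap is non-negative, the second index set is contained in the first, so firings either decrease both counters simultaneously or decrease only the JSQ$(d(N))$ one, in the latter case strictly shrinking but never flipping the sign of the gap. For arrivals on $E_N$, the bound $c(N)\geq N-Q_1^{\sss d(N)}(\tau_k^-)$ forces $\{\min_i R_k^i > c(N)\}$ to imply that every sampled rank lies in the busy portion of the JSQ$(d(N))$ state; if in addition $\min_i R_k^i \leq N-Q_2^{\sss d(N)}(\tau_k^-)$, then the smallest-rank sample has queue length exactly one, so JSQ$(d(N))$ also increments $Q_2^{\sss d(N)}$ in lock-step. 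The only way the coupling can fail at an arrival is the ``bad'' event $\{\min_i R_k^i > N-Q_2^{\sss d(N)}(\tau_k^-)\}$, whose conditional probability is bounded by $(Q_2^{\sss d(N)}(\tau_k^-)/N)^{d(N)}\leq (M/\sqrt{N})^{d(N)}$ on $E_N$.

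Finally, summing these per-arrival failure probabilities, the expected number of bad arrival events in $[0,T]$ restricted to $E_N$ is at most $\lambda(N)\,T\,(M/\sqrt{N})^{d(N)}$, which vanishes as $N\to\infty$ whenever $d(N)\to\infty$ (the only regime in which the hypothesis is non-vacuous, since the classical power-of-$d$ results already preclude the JSQ diffusion limit for bounded $d$). A Markov bound combined with $E_N$ then yields a joint event of probability tending to one on which every increase of $Q_2^{\sss \Pi(c(N))}$ is matched by a simultaneous increase of $Q_2^{\sss d(N)}$, so the inequality assumed at $t=0$ is preserved over all of $[0,T]$; letting $\eta\downarrow 0$ completes the proof. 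The main technical hurdle I anticipate is the bookkeeping of this ``bad'' event: the crucial feature of the rank-based construction is that the state-independent $\Pi$-increase indicator is realised as a deterministic function of the very randomness driving the JSQ$(d(N))$ assignment, which is precisely what lets the almost sure domination be traded for a vanishingly small coupling defect.
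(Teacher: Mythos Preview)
Your argument is correct and follows the same two-step strategy as the paper: S-couple the departures so that the inequality $Q_2^{\sss d(N)}\geq Q_2^{\sss \Pi(c(N))}$ is preserved at each service completion, and couple the arrivals so that, on a high-probability event furnished by the stochastic-boundedness hypothesis together with $c(N)/\sqrt{N}\to\infty$, every $\Pi(c(N))$ increase is matched by a JSQ$(d(N))$ increase of $Q_2$.

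The only real difference lies in how the arrival coupling is realised. The paper draws a \emph{single} uniform variable $U\in[0,1]$ per arrival and uses the state-dependent thresholds directly: $\Pi(c(N))$ sends to a queue-one server when $U<(1-c(N)/N)^{d(N)}$, and JSQ$(d(N))$ does so when $U<(Q_1^{\sss d(N)}/N)^{d(N)}-(Q_2^{\sss d(N)}/N)^{d(N)}$. It then shows that, with probability tending to one, the latter threshold dominates the former uniformly on $[0,T]$, so on that event the inclusion of increase events is \emph{exact} and no residual ``bad arrival'' term appears. Your rank-based construction instead realises the $\Pi(c(N))$ increase indicator as $\{\min_i R_k^i>c(N)\}$, which on $E_N$ guarantees that no idle server is sampled but still leaves the defect $\{\min_i R_k^i>N-Q_2^{\sss d(N)}\}$ to be controlled separately by a Markov bound. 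Both routes are valid; the paper's single-$U$ device is somewhat more economical because the threshold inequality $(Q_1/N)^{d(N)}-(Q_2/N)^{d(N)}\geq(1-c(N)/N)^{d(N)}$ simultaneously absorbs what you split into the event $E_N$ and the $(M/\sqrt{N})^{d(N)}$ tail, whereas your construction has the pedagogical advantage of keeping the sampling mechanism of JSQ$(d(N))$ explicit.
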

In order to prove Lemma~\ref{lem:upper}, we first S-couple the two systems under schemes $\Pi(c(N))$ and JSQ($d(N)$) respectively. 
Now at each external arrival, to assign the task in the two systems in a coupled way, draw a single uniform$[0,1]$ random variable $U$, independent of any other processes. 
\begin{itemize}
\item Under the JSQ($d(N)$) scheme, if $u<(Q_1^{\sss d(N)}/N)^{\sss d(N)}-(Q_2^{\sss d(N)}/N)^{\sss d(N)}$, then assign the task to a server with queue length one, if 
\begin{equation}\label{eq:idleserver}
(Q_1^{\sss d(N)}/N)^{\sss d(N)}-(Q_2^{\sss d(N)}/N)^{\sss d(N)}<U<1-(Q_2^{\sss d(N)}/N)^{\sss d(N)},
\end{equation}
then assign the task to an idle server, and otherwise discard it. 
This preserves the statistical law of the JSQ($d(N)$) scheme with a buffer size $b=2$.
Indeed note that according to the above rule the probability that an incoming task will be assigned
to some server with queue length zero, one, and two, are respectively given by $(1-(Q_1^{\sss d(N)}/N)^{\sss d(N)})$, $(Q_1^{\sss d(N)}/N)^{\sss d(N)}-(Q_2^{\sss d(N)}/N)^{\sss d(N)}$, and $(Q_2^{\sss d(N)}/N)^{\sss d(N)}$.
\item Under the scheme $\Pi(c(N))$, if $U<(1-c(N)/N)^{\sss d(N)}$, then assign the incoming task to a server with queue length one, otherwise discard it.
Clearly, the statistical law of the $\Pi(c(N))$ scheme is preserved by this rule.
\end{itemize}
\begin{proof}[Proof of Lemma~\ref{lem:upper}]
Fix any $T\geq 0$. Now the proof follows in two steps:

(i) First assume that at each external arrival up to time $T$, whenever an incoming task joins a server with queue length one, under the $\Pi(c(N))$ scheme, then so does the incoming task under JSQ($d(N)$) scheme. 
In that case, since the two systems are S-coupled, by forward induction on event times, it can be seen that $Q_2^{\sss d(N)}(t)\geq Q_2^{\sss \Pi(c(N))}(t)$ for all $0\leq t\leq T$, provided $Q_2^{\sss d(N)}(0)\geq Q_2^{\sss \Pi(c(N))}(0)$.

(ii) 
Now, for any $T\geq0$, 
according to the hypothesis, both $\sup_{t\in [0,T]}Q_2^{\sss d(N)}(t)$ and $\sup_{t\in[0,T]}\big\{N-Q_1^{\sss d(N)}(t)\big\}$  are $\Op(\sqrt{N})$. 
Also, since $c(N)/\sqrt{N}\to\infty$, it is straightforward to check that
\begin{equation}\label{eq:pi(c(N))}
\liminf_{N\to\infty}\Pro{\sup_{t\in [0,T]}\left(\frac{Q_1^{\sss d(N)}(t)}{N}\right)^{\sss d(N)}-\left(\frac{Q_2^{\sss d(N)}(t)}{N}\right)^{\sss d(N)}\geq \left(1-\frac{c(N)}{N}\right)^{\sss d(N)}}=1.
\end{equation}

Note that the probabilities that an incoming task joins a server with queue length one are given by 
$(Q_1^{\sss d(N)}(t)/N)^{\sss d(N)}-(Q_2^{\sss d(N)}(t)/N)^{\sss d(N)}$ and $(1-c(N)/N)^{\sss d(N)}$ for the JSQ($d(N)$) and the $\Pi(c(N))$ scheme respectively. 
Informally speaking, due to the above coupling,~\eqref{eq:pi(c(N))} then implies that with high probability, on any finite time interval, whenever an external incoming task joins a server with queue length one under the $\Pi(c(N))$ scheme, then so does the incoming task under the JSQ($d(N)$) scheme. 
 Therefore, from Part~(i) above, we can say
 \begin{align*}
&\Pro{\sup_{t\in [0,T]}\big\{Q_2^{\sss d(N)}(t)-Q_2^{\sss \Pi(c(N))}(t)\big\}\geq 0}\\
 &\geq \Pro{\sup_{t\in [0,T]}\left(\frac{Q_1^{\sss d(N)}(t)}{N}\right)^{\sss d(N)}-\left(\frac{Q_2^{\sss d(N)}(t)}{N}\right)^{\sss d(N)}\geq \left(1-\frac{c(N)}{N}\right)^{\sss d(N)}}\\
 &\longrightarrow 1 \quad\text{as}\quad N\to\infty.
 \end{align*}

This shows that if for any $T\geq 0$, if $\sup_{t\in [0,T]}Q_2^{\sss d(N)}(t)$ and $\sup_{t\in[0,T]}\left\{N-Q_1^{\sss d(N)}(t)\right\}$  are  $\Op(\sqrt{N})$, then with probability tending to one as $N\to\infty$, up to time $t$, the process $\big\{Q_2^{\sss \Pi(c(N))}(t)\big\}_{0\leq t\leq T}$ is indeed a lower bound for the process  $\big\{Q_2^{\sss d(N)}(t)\big\}_{0\leq t\leq T}$, and hence by our hypothesis, the proof is complete. 
\end{proof}

\begin{proof}[Proof of Theorem~\ref{th:diff necessary}]
Fix any positive sequence $d(N)$ such that $d(N)/(\sqrt{N}\log N)\to 0$ as $N\to\infty$.
Assume the hypothesis that for the JSQ($d(N)$) scheme, the process
$$\big\{(Q_1^{\sss d(N)}(t)-N)/\sqrt{N}, Q_2^{\sss d(N)}(t)/\sqrt{N}\big\}_{t\geq 0}$$ 
converges to the appropriate diffusion limit corresponding to that of the ordinary JSQ policy.
We will show that under this hypothesis, the process $\big\{Q_2^{\sss d(N)}(t)/\sqrt{N}\big\}_{t\geq 0}$ is not stochastically bounded, which will then lead to a contradiction. 

In order to show this, we will choose an appropriate $c(N)$ such that $c(N)/\sqrt{N}\to\infty$ as $N\to\infty$, and the process $\big\{Q_2^{\sss \Pi(c(N))}(t)/\sqrt{N}\big\}_{t\geq 0}$ is not stochastically bounded.
The conclusion then follows by the application of Lemma~\ref{lem:upper}.

Observe that the martingale decomposition of the scaled $Q_2^{\sss \Pi(c(N))}(\cdot)$ process can be written as
\begin{equation}\label{eq:mart-modif}
\bQ_2^{\sss \Pi(c(N))}(t)=\bQ_2^{\sss \Pi(c(N))}(0)+\frac{M^N(t)}{\sqrt{N}}+\frac{\lambda(N) t}{\sqrt{N}}\left(1-\frac{c(N)}{N}\right)^{\sss d(N)}-\int_0^t\bQ_2^{\sss \Pi(c(N))}(s)ds,
\end{equation}
where $\bQ^{\sss \Pi(c(N))}_2(t)=Q^{\sss \Pi(c(N))}_2(t)/\sqrt{N}.$ 
Now write $c(N)=g(N)\sqrt{N}$, for some $g(N)\to\infty$ (to be chosen later),  and $d(N)=\sqrt{N}\log(N)/\omega(N)$, where $\omega(N)=\sqrt{N}\log(N)/d(N)\to\infty$ as $N\to\infty$. Therefore, we write \eqref{eq:mart-modif} as
\begin{equation}
\begin{split}
\bQ_2^{\sss \Pi(c(N))}(t)=\bQ_2^{\sss \Pi(c(N))}(0)+\frac{M^N(t)}{\sqrt{N}}+\frac{\lambda(N) t}{\sqrt{N}}\left(1-\frac{g(N)}{\sqrt{N}}\right)^{\frac{\sqrt{N}\log N}{\omega(N)}}-\int_0^t\bQ_2^{\sss \Pi(c(N))}(s)ds.
\end{split}
\end{equation}
Observe that for any $t\geq 0$,
\begin{align*}
&\lim_{N\to\infty}\frac{\lambda(N) t}{\sqrt{N}}\left(1-\frac{g(N)}{\sqrt{N}}\right)^{\frac{\sqrt{N}\log N}{\omega(N)}}\\
&=t\lim_{N\to\infty}\exp\left[\log (\sqrt{N}-\beta)+\frac{\sqrt{N}\log N}{\omega(N)}\log\left(1-\frac{g(N)}{\sqrt{N}}\right)\right]\\
&=t\lim_{N\to\infty}\exp\left[\log (\sqrt{N}-\beta)-\frac{g(N)\log N}{\omega(N)}-o\left(\frac{g(N)\log N}{\omega(N)}\right)\right]
\end{align*}
Choosing $g(N)$ such that $g(N)/\omega(N)\to 0$ implies 
$$\frac{\lambda(N) t}{\sqrt{N}}\left(1-\frac{g(N)}{\sqrt{N}}\right)^{\frac{\sqrt{N}\log N}{\omega(N)}}\to\infty,\quad \text{as}\quad N\to\infty.$$
Note that for any $\omega(N)$, this choice of $g(N)$ is feasible (choose $g(N)=\sqrt{\omega(N)}$, say).
Furthermore, the process $\big\{M^N(t)/\sqrt{N}\big\}_{t\geq 0}$ in \eqref{eq:mart-modif} is stochastically bounded due to the martingale FCLT \cite[Theorem~7.1]{EK2009} and our hypothesis. 
Now we can conclude that for the above choices of $g(N)$ and $\omega(N)$, the process $\big\{\bQ_2^{\sss \Pi(c(N))}(t)\big\}_{t\geq 0}$, and hence the process $\big\{\bQ_2^{\sss d(N)}(t)\big\}_{t\geq 0}$ (due to Lemma~\ref{lem:upper}) is not stochastically bounded. 
Therefore, the limit does not coincide with the limit of the scaled $Q_2^{\sss \jsq}$-process.
\end{proof}

\section{Conclusion}\label{sec:conclusion}
In the present paper we have established universality properties
for power-of-$d$ load balancing schemes in many-server systems.
Specifically, we considered a system of $N$~parallel exponential
servers and a single dispatcher which assigns arriving tasks to the
server with the shortest queue among $d(N)$ randomly selected servers.
We developed a novel stochastic coupling construction to bound the
difference in the queue length processes between the JSQ policy
($d = N$) and a scheme with an arbitrary value of~$d$.
As it turns out, a direct comparison between the JSQ policy
and a JSQ($d$) scheme is a significant challenge.
Hence, we adopted a two-stage approach based on a novel class
of schemes which always assign the incoming task to one of the
servers with the $n(N) + 1$ smallest number of tasks.
Just like the JSQ($d(N)$) scheme, these schemes may be thought
of as `sloppy' versions of the JSQ policy.
Indeed, the JSQ($d(N)$) scheme is guaranteed to identify the
server with the minimum number of tasks, but only among
a randomly sampled subset of $d(N)$ servers.
In contrast, the schemes in the above class only guarantee that
one of the $n(N) + 1$ servers with the smallest number of tasks
is selected, but across the entire system of $N$ servers.
We showed that the system occupancy processes for an intermediate
blend of these schemes are simultaneously close on a $g(N)$ scale
($g(N) = N$ or $g(N) = \sqrt{N}$) to both the JSQ policy
and the JSQ($d(N)$) scheme for suitably chosen values of $d(N)$
and $n(N)$ as function of $g(N)$.
Based on the latter asymptotic universality, it then sufficed to
establish the fluid and diffusion limits for the ordinary JSQ policy.
Thus deriving the fluid limit of the ordinary JSQ policy, and using the above coupling argument we establish the fluid limit of the JSQ$(d(N))$ scheme in a regime with
$d(N) \to \infty$ as $N \to \infty$, along with the corresponding
fixed point.
The fluid limit turns out not to depend on the exact growth rate
of $d(N)$, and in particular coincides with that for the ordinary JSQ policy.
We further leveraged the coupling to prove that the diffusion limit
in the Halfin-Whitt regime with $d(N)/(\sqrt{N} \log (N)) \to \infty$
as $N \to \infty$ corresponds to that for the JSQ policy.
These results indicate that the optimality of the JSQ policy can be
preserved at the fluid-level and diffusion-level while reducing the
overhead by nearly a factor~O($N$) and O($\sqrt{N} / \log(N)$),
respectively.
In future work we plan to extend the results to heterogeneous
servers and non-exponential service requirement distributions.
We also intend to pursue extensions to network scenarios
and server-task compatibility constraints.

\section*{Acknowledgment}
This research was financially supported by an ERC Starting Grant and by The Netherlands Organization for Scientific Research (NWO) through TOP-GO grant 613.001.012 and Gravitation Networks grant 024.002.003. Dr.~Whiting was supported in part by an Australian Research grant DP-1592400 and in part by a Macquarie University Vice-Chancellor Innovation Fellowship. 

{
\bibliographystyle{apa}

\bibliography{bibl}

\begin{thebibliography}{}

\bibitem[\protect\astroncite{Banerjee and Mukherjee}{2018a}]{BM18b}
Banerjee, S. and Mukherjee, D. (2018a).
\newblock {Join-the-shortest queue diffusion limit in Halfin-Whitt regime:
  Sensitivity on the heavy traffic parameter}.
\newblock {\em arXiv:1809.01739}.

\bibitem[\protect\astroncite{Banerjee and Mukherjee}{2018b}]{BM18}
Banerjee, S. and Mukherjee, D. (2018b).
\newblock {Join-the-shortest queue diffusion limit in Halfin-Whitt regime: Tail
  asymptotics and scaling of extrema}.
\newblock {\em Ann. Appl. Probab. (to appear)}.

\bibitem[\protect\astroncite{Bramson et~al.}{2012}]{BLP12}
Bramson, M., Lu, Y., and Prabhakar, B. (2012).
\newblock {Asymptotic independence of queues under randomized load balancing}.
\newblock {\em Queueing Systems}, 71(3):247--292.

\bibitem[\protect\astroncite{Braverman}{2018}]{Braverman18}
Braverman, A. (2018).
\newblock {Steady-state analysis of the join the shortest queue model in the
  Halfin-Whitt regime}.
\newblock {\em arXiv:1801.05121}.

\bibitem[\protect\astroncite{Brightwell and Luczak}{2012}]{BL12}
Brightwell, G. and Luczak, M. (2012).
\newblock {The supermarket model with arrival rate tending to one}.
\newblock {\em arXiv:1201.5523}.

\bibitem[\protect\astroncite{Ephremides et~al.}{1980}]{EVW80}
Ephremides, A., Varaiya, P., and Walrand, J. (1980).
\newblock {A simple dynamic routing problem}.
\newblock {\em IEEE Transactions on Automatic Control}, 25(4):690--693.

\bibitem[\protect\astroncite{Eschenfeldt and Gamarnik}{2016}]{EG16}
Eschenfeldt, P. and Gamarnik, D. (2016).
\newblock {Supermarket queueing system in the heavy traffic regime. Short queue
  dynamics}.
\newblock {\em arXiv: 1610.03522}.

\bibitem[\protect\astroncite{Eschenfeldt and Gamarnik}{2018}]{EG15}
Eschenfeldt, P. and Gamarnik, D. (2018).
\newblock {Join the shortest queue with many servers. The heavy
  traffic-asymptotics}.
\newblock {\em Math. Oper. Res}, 43(3):867--886.

\bibitem[\protect\astroncite{Ethier and Kurtz}{2009}]{EK2009}
Ethier, S.~N. and Kurtz, T.~G. (2009).
\newblock {\em {Markov Processes: Characterization and Convergence}}.
\newblock John Wiley {\&} Sons.

\bibitem[\protect\astroncite{Gamarnik et~al.}{2016}]{GTZ16}
Gamarnik, D., Tsitsiklis, J., and Zubeldia, M. (2016).
\newblock {Delay, memory and messaging tradeoffs in distributed service
  systems}.
\newblock In {\em Proc. ACM SIGMETRICS}, pages 1--12.

\bibitem[\protect\astroncite{Graham}{2005}]{G05}
Graham, C. (2005).
\newblock {Functional central limit theorems for a large network in which
  customers join the shortest of several queues}.
\newblock {\em Probability Theory and Related Fields}, 131(1):97--120.

\bibitem[\protect\astroncite{Gupta et~al.}{2007}]{GHSW07}
Gupta, V., Harchol-Balter, M., Sigman, K., and Whitt, W. (2007).
\newblock {Analysis of join-the-shortest-queue routing for web server farms}.
\newblock {\em Performance Evaluation}, 64(9-12):1062--1081.

\bibitem[\protect\astroncite{Halfin and Whitt}{1981}]{HW81}
Halfin, S. and Whitt, W. (1981).
\newblock {Heavy-traffic limits for queues with many exponential servers}.
\newblock {\em Operations Research}, 29(3):567--588.

\bibitem[\protect\astroncite{Hoeffding}{1963}]{H63}
Hoeffding, W. (1963).
\newblock {Probability inequalities for sums of bounded random variables}.
\newblock {\em Journal of the American Statistical Association},
  58(301):13--30.

\bibitem[\protect\astroncite{Hunt and Kurtz}{1994}]{HK94}
Hunt, P. and Kurtz, T. (1994).
\newblock {Large loss networks}.
\newblock {\em Stochastic Processes and their Applications}, 53(2):363--378.

\bibitem[\protect\astroncite{Jonckheere}{2006}]{Jonckheere06}
Jonckheere, M. (2006).
\newblock {Insensitive versus efficient dynamic load balancing in networks
  without blocking}.
\newblock {\em Queueing Systems}, 54(3):193--202.

\bibitem[\protect\astroncite{Liptser and Shiryaev}{1989}]{LS89}
Liptser, R. and Shiryaev, A. (1989).
\newblock {\em {Theory of Martingales}}.
\newblock Springer.

\bibitem[\protect\astroncite{Luczak and McDiarmid}{2006}]{LM06}
Luczak, M.~J. and McDiarmid, C. (2006).
\newblock {On the maximum queue length in the supermarket model}.
\newblock {\em The Annals of Probability}, 34(2):493--527.

\bibitem[\protect\astroncite{Luczak and Norris}{2005}]{LN05}
Luczak, M.~J. and Norris, J. (2005).
\newblock {Strong approximation for the supermarket model}.
\newblock {\em The Annals of Applied Probability}, 15(3):2038--2061.

\bibitem[\protect\astroncite{Luh and Pippenger}{2014}]{LP14}
Luh, K. and Pippenger, N. (2014).
\newblock {Large-deviation bounds for sampling without replacement}.
\newblock {\em The American Mathematical Monthly}, 121(5):449--454.

\bibitem[\protect\astroncite{Mitzenmacher}{1996}]{Mitzenmacher1996}
Mitzenmacher, M. (1996).
\newblock {\em {The power of two choices in randomized load balancing}}.
\newblock PhD thesis, University of California, Berkeley.

\bibitem[\protect\astroncite{Mitzenmacher}{2001}]{Mitzenmacher01}
Mitzenmacher, M. (2001).
\newblock {The power of two choices in randomized load balancing}.
\newblock {\em IEEE Transactions on Parallel and Distributed Systems},
  12(10):1094--1104.

\bibitem[\protect\astroncite{Mukherjee et~al.}{2016a}]{MBLW16-4}
Mukherjee, D., Borst, S.~C., van Leeuwaarden, J. S.~H., and Whiting, P.~A.
  (2016a).
\newblock {Asymptotic optimality of power-of-d load balancing in large-scale
  systems}.
\newblock {\em arXiv:1612.00722}.

\bibitem[\protect\astroncite{Mukherjee et~al.}{2016b}]{MBLW15}
Mukherjee, D., Borst, S.~C., van Leeuwaarden, J. S.~H., and Whiting, P.~A.
  (2016b).
\newblock {Universality of load balancing schemes on the diffusion scale}.
\newblock {\em J. Appl. Probab.}, 53(4).

\bibitem[\protect\astroncite{Pang et~al.}{2007}]{PTRW07}
Pang, G., Talreja, R., and Whitt, W. (2007).
\newblock {Martingale proofs of many-server heavy-traffic limits for Markovian
  queues}.
\newblock {\em Prob. Surveys}, 4:193--267.

\bibitem[\protect\astroncite{Sparaggis et~al.}{1994}]{towsley}
Sparaggis, P.~D., Towsley, D., and Cassandras, C.~G. (1994).
\newblock {Sample path criteria for weak majorization}.
\newblock {\em Advances in Applied Probability}, 26(1):155--171.

\bibitem[\protect\astroncite{Towsley}{1995}]{Towsley95}
Towsley, D. (1995).
\newblock {Application of majorization to control problems in queueing
  systems}.
\newblock In Chr{\'{e}}tienne, P., Coffman, E.~G., Lenstra, J.~K., and Liu, Z.,
  editors, {\em Scheduling Theory and its Applications}, chapter~14. John Wiley
  {\&} Sons, Chichester.

\bibitem[\protect\astroncite{Towsley et~al.}{1992}]{Towsley1992}
Towsley, D., Sparaggis, P., and Cassandras, C. (1992).
\newblock {Optimal routing and buffer allocation for a class of finite capacity
  queueing systems}.
\newblock {\em IEEE Transactions on Automatic Control}, 37(9):1446--1451.

\bibitem[\protect\astroncite{van Leeuwaarden and Knessl}{2011}]{LK11}
van Leeuwaarden, J. S.~H. and Knessl, C. (2011).
\newblock {Transient behavior of the Halfin–Whitt diffusion}.
\newblock {\em Stochastic Processes and their Applications}, 121(7):1524--1545.

\bibitem[\protect\astroncite{van Leeuwaarden and Knessl}{2012}]{LK12}
van Leeuwaarden, J. S.~H. and Knessl, C. (2012).
\newblock {Spectral gap of the Erlang A model in the Halfin-Whitt regime}.
\newblock {\em Stochastic Systems}, 2(1):149--207.

\bibitem[\protect\astroncite{Vvedenskaya et~al.}{1996}]{VDK96}
Vvedenskaya, N.~D., Dobrushin, R.~L., and Karpelevich, F.~I. (1996).
\newblock {Queueing system with selection of the shortest of two queues: An
  asymptotic approach}.
\newblock {\em Problemy Peredachi Informatsii}, 32(1):20--34.

\bibitem[\protect\astroncite{Weber}{1978}]{W78}
Weber, R.~R. (1978).
\newblock {On the optimal assignment of customers to parallel servers}.
\newblock {\em Journal of Applied Probability}, 15(2):406--413.

\bibitem[\protect\astroncite{Whitt}{1986}]{Whitt86}
Whitt, W. (1986).
\newblock {Deciding which queue to join: Some counterexamples}.
\newblock {\em Operations Research}, 34(1):55--62.

\bibitem[\protect\astroncite{Whitt}{2002}]{W02}
Whitt, W. (2002).
\newblock {\em {Stochastic-Process Limits}}.
\newblock Springer Series in Operations Research and Financial Engineering.
  Springer-Verlag, New York.

\bibitem[\protect\astroncite{Winston}{1977}]{Winston77}
Winston, W. (1977).
\newblock {Optimality of the shortest line discipline}.
\newblock {\em Journal of Applied Probability}, 14(1):181--189.

\bibitem[\protect\astroncite{Ying et~al.}{2015}]{YSK15}
Ying, L., Srikant, R., and Kang, X. (2015).
\newblock {The power of slightly more than one sample in randomized load
  balancing}.
\newblock In {\em Proc. IEEE INFOCOM}, pages 1131--1139.

\end{thebibliography}
}
\end{document}